\documentclass[11pt,reqno]{amsart}
\usepackage{amsmath,amssymb,mathrsfs}
\usepackage{graphicx,cite}
\usepackage{subfigure}
\usepackage{epstopdf}
\usepackage{graphics} 
\usepackage{epsfig} 

\setlength{\topmargin}{-1.5cm}
\setlength{\oddsidemargin}{0.0cm}
\setlength{\evensidemargin}{0.0cm}
\setlength{\textwidth}{16.7cm}
\setlength{\textheight}{23cm}
\headheight 20pt
\headsep    26pt
\footskip 0.4in

\setlength{\itemsep}{0pt}
\setlength{\parsep}{0pt}
\setlength{\parskip}{2pt}

\newcommand{\R}{{\mathbb R}}
\newcommand{\C}{{\mathbb C}}
\newcommand{\be}{\begin{eqnarray}}
\newcommand{\ben}{\begin{eqnarray*}}
\newcommand{\en}{\end{eqnarray}}
\newcommand{\enn}{\end{eqnarray*}}

\newtheorem{theorem}{Theorem}[section]
\newtheorem{theo}[theorem]{Theorem}
\newtheorem{lemm}[theorem]{Lemma}
\newtheorem{corollary}[theorem]{Corollary}
\newtheorem{defi}[theorem]{Definition}
\newtheorem{rema}[theorem]{Remark}

\numberwithin{equation}{section}

\begin{document}

\title[elastic scattering from rough surfaces]{Elastic scattering from rough surfaces in three dimensions}

\author{Guanghui Hu}
\address{Beijing Computational Science Research Center, Beijing 100193, China}
\email{hu@csrc.ac.cn}

\author{Peijun Li}
\address{Department of Mathematics, Purdue University, West Lafayette, Indiana 47907, USA}
\email{lipeijun@math.purdue.edu}

\author{Yue Zhao}
\address{School of Mathematics and Statistics, Central China Normal University, Wuhan 430079, China}
\email{zhaoyueccnu@163.com}


\subjclass[2010]{35A15, 35P25, 74J20}

\keywords{elastic wave equation, rigid rough surface, variational method, local perturbation, Green's tensor, radiation condition.}

\begin{abstract}
Consider the elastic scattering of a plane or point incident wave by an unbounded and rigid rough surface. The angular spectrum representation (ASR) for the time-harmonic Navier equation is derived in three dimensions. The ASR is utilized as a radiation condition to the elastic rough surface scattering problem. The uniqueness is proved through a Rellich-type identity for surfaces given by uniformly Lipschitz functions. In the case of flat surfaces with a local perturbation, we deduce an equivalent variational formulation in a truncated bounded domain and show the existence results for general incoming waves. The main ingredient of the proof is the radiating behavior of the Green tensor to the first boundary value problem of the Navier equation in a half space.
\end{abstract}

\maketitle

\section{Introduction}

Rough surface scattering problems have important applications in diverse scientific areas such as remote sensing, geophysics, outdoor sound propagation, radar techniques. Significant progress has been made by Chandler-Wilde and his co-authors concerning the mathematical analysis and the numerical approximation of the acoustic scattering problems modeled by the Helmholtz equation. We refer to \cite{CR, CW-Zhang-98b, CHP2006, S.B.1999,CW-Zhang-98a} for the integral equation method and to \cite{CWEL,cm-siaa05} the variational approach in both two and three dimensional settings. In the work of Duran, Muga and Nedelec \cite{Nedelec09}, the radiation condition and well-posedness in the absence of acoustic surfaces waves were discussed under the non-absorbing boundary condition in a locally perturbed half plane. The electromagnetic scattering problems were studied in \cite{lwz12} when the medium is lossy and also in \cite{Haddar, lzz-m2as17} in the more challenging case of a penetrable dielectric layer.

This paper concerns the mathematical analysis of the time-harmonic elastic scattering from unbounded rigid surfaces
in three dimensions. The relevant phenomena for the elastic wave propagation can be found in geophysics and seismology (see e.g.,\cite{Abubakar,A73} and the references cited therein). In linear elasticity, the  existence and uniqueness of
solutions were firstly given by Arens in \cite{Aren99, a-siaa01,Aren02} for
$C^{1,\alpha}$-smooth rough surfaces via the boundary integral
equation method in two dimensions, which generalize the solvability results of
\cite{CW-Zhang-98b, S.B.1999,CW-Zhang-98a}
for acoustic waves to elastic waves. Moreover, an upward propagating radiation condition (UPRC) was proposed
in \cite{a-siaa01} based on the elastic Green's tensor of the Dirichlet
boundary value problem for the Navier equation in a
half-space. It is known that the classical Kupradze radiation condition
(e.g. \cite{Ku}) is not appropriate in the case of unbounded rough
surfaces. The variational approach was firstly proposed in \cite{eh-mmmas12, eh-2010} for treating the well-posedenss in periodic structures with the Rayleigh expansion condition (REC) and in \cite{eh-siaa12,eh-2015} for general rigid rough surfaces using the angular spectrum representation (ASR) (see also \cite{radiation} for early discussions with less rigourous arguments).
However, most of these works are devoted to two-dimensional elastic scattering problems and little analysis has been carried out in three dimensions.

The goal of this paper is threefold. First, we present a mathematical setting of the elastic rough surface scattering problems in three dimensions. In particular, we derive the upward angular spectrum representation (UASR) and the Green's tensor to the first boundary value problem of the Navier equation in a half space. To be best of our knowledge, the UASR and the Green's tensor have not been rigorously investigated in the mathematical literature. The UASR for the Navier equation can be regarded as a formal outgoing radiation condition in rough surface scattering problems (see \cite{cm-siaa05} in the acoustic case) and it leads to an equivalent Dirichlet-to-Neumann map (transparent boundary condition) which can be used to truncate the unbounded domain in the vertical direction. Second, we prove the uniqueness of weak solutions if the rigid surface is the graph of a uniformly Lipschitz continuous function. As in the two-dimensional case \cite{eh-siaa12}, our uniqueness proof is essentially based on a Rellich-type identity in an unbounded strip. However, the calculation of some key integral identities (see e.g., \eqref{eq:4}) is much more involved than the two dimensional problem. Third, as an application of the half-space radiation condition and Green's tensor, we show the existence of solutions to locally perturbed scattering problems. Unlike the Helmholtz or Maxwell equations (see e.g., \cite{baohuyin18,Wood2006,lwz12,Li2010, Li2015}, an essential difficulty in elasticity arises from the lack of a series solution of the Navier equation satisfying the Dirichlet boundary condition on the ground plane. We refer to Remarks \ref{Rem:5.4} and \ref{rema:5.7} for a detailed comparision of the well-posedness results presented in this paper and those in acoustic and electromagnetic waves.
The local perturbation argument has significantly simplified the analysis for general rough surfaces, because one can derive an equivalent variational formulation in a bounded domain in which the Fredhlom alternative can be applied. We point out some open questions in this respective in  Section \ref{sec:con}. Our future work will be devoted to well-posedness of general (non-periodic) rough surface scattering problems.

It should be remarked that, elastic surface waves, which exponentially decay in the vertical direction,  fulfill the newly established radiation condition (\ref{uprc}) in a weighted Sobolev space (see e.g., \cite{eh-2015} in 2D) rather than the usual $H^1$-space as considered in this paper. Hence,
our uniqueness result (Theorem \ref{uniqueness}) does not give rise to the absence of surface waves caused by a rigid scattering interface. In fact, the horizontally decaying behavior of solutions in $H^1$ (see Theorem \ref{uniqueness}) excludes elastic surface waves.
A possible future effort is to analyze the absence of elastic surface waves by proving well-posedness in weighted Sobolev spaces, if the rigid rough surface is the graph of a function.
  For flat surfaces with local perturbations, the well-posedness results and the solution form (see Theorems \ref{exist} and \ref{th5.12})  are not valid under the traction-free boundary condition, due to the presence of surface waves in the far-field expansion. We refer to \cite{DMN2011} for the two-dimensional Green's tensor with a free flat boundary and the corresponding well-posedness result in a locally perturbed half-plane. The limiting absorption principle was justified in \cite{DG1988}
for a free boundary in a locally perturbed half space.
Note that our arguments for rigid flat surfaces with local perturbations  depend on the asymptotic behaviour of the half-space Green's tensor which is different from the case of free boundaries; see Theorems \ref{exist} and \ref{th5.12}.

The remaining part of this paper is organized as follows. In Section \ref{sec:2}, we formulate the three-dimensional rough surface problems and introduce the upward and downward angular spectrum representations. The downward and upward Dirichlet-to-Neumann maps will be defined and analyzed in Section \ref{sec:3}. Section \ref{cuniqueness} is devoted to the uniqueness proof for general rough surface scattering problems; while Section \ref{existence} is devoted to the existence for locally perturbed scattering problems. Some concluding remarks and open questions will be presented in the Section \ref{sec:con}.

\section{Problem formulation}\label{sec:2}
\setcounter{equation}{0}

In this section, we present the mathematical formulation of the three-dimensional elastic wave scattering by unbounded rigid rough surfaces. Let $D \subset \mathbb R^3$ be an unbounded connected open set such that, for some constants $f_-<f_+$,
\[
U_{f_+}\subset D \subset U_{f_-}, \quad U_b : = \{x= (x^{\prime},x_3): x_3>b\}, \quad x^{\prime}:=(x_1, x_2).
\]
For $b>f_+$, let $\Gamma_b= \{x\in \mathbb R^3: x_3 = b\}$ and $S_b= D \backslash \overline{U}_b$. We assume that $\Gamma:= \partial D$ is an unbounded rough surface, which is Lipschitz continuous but not necessary the graph of some function. The space $D$ is supposed to be filled with a homogeneous and isotropic elastic medium with unit mass density.

Let ${u}^{\rm in}$ be a time-harmonic elastic wave which is incident on the rough surface from above. Let $\omega>0$ be the angular frequency of the  incident wave. Denote by $\lambda$, $\mu$ the Lam\'{e} constants characterizing the medium above $\Gamma$ and satisfying $\mu>0, \lambda +2\mu/3>0$. The incident wave $u^{\rm in}$ is allowed to be a general elastic plane wave of the following form
 \begin{align}\label{inc}
{u}^{\rm in}(x) = c_{\rm p}{u}^{\rm in}_{\rm p}(x) +
c_{\rm s, 1}{u}^{\rm in}_{{\rm s}, 1}(x)
 +  c_{\rm s, 2}{u}^{\rm in}_{{\rm s}, 2}(x), \quad c_{\rm p}, c_{\rm s, j} \in \mathbb C,\, j=1,2,
\end{align}
where ${u}^{\rm in}_{\rm p}$ is the compressional plane wave
 \begin{align}\label{icw}
 {u}^{\rm in}_{\rm p}(x)={d} e^{{\rm
i}\kappa_{\rm p}{x}\cdot{d}}, \quad   d:=d(\theta, \varphi)=(\sin\theta\cos\varphi, \sin\theta\sin\varphi, -\cos\theta)^\top
\end{align}
and ${u}^{\rm in}_{{\rm s}, j}$ are the shear plane waves
\begin{align}\label{isp}
 {u}^{\rm in}_{{\rm s}, j}(x)=d_j^\bot e^{{\rm
i}\kappa_{\rm s}{x}\cdot{d}},\quad j=1,2.
\end{align}
Here $\theta\in [0, \pi/2), \varphi\in[0, 2\pi)$ are the incident angles, $d_j^\bot$ are unit vectors satisfying $d^\bot_j \cdot d = 0$, and
\begin{equation*}
\kappa_{\rm p}=\omega/\sqrt{\lambda+2\mu}, \quad \kappa_{\rm s}=\omega/\sqrt{\mu}
\end{equation*}
are the compressional and shear wavenumbers, respectively. It is clear to note that ${u}^{\rm in}_{\rm p}$ is a longitudinal wave and ${u}^{\rm in}_{{\rm s}, j}, j=1, 2$ are transversal waves.
 It can be verified that the incident field ${u}^{\rm in}$ satisfies the three-dimensional time-harmonic Navier equation:
\begin{equation}\label{ifne}
\mu\Delta{u}^{\rm
in}+(\lambda+\mu)\nabla \nabla\cdot{u}^{\rm in} +
\omega^2{u}^{\rm in}=0\quad\text{in}~ \R^3.
\end{equation}

In this paper, we assume that the elastic medium beneath the rough surface is impenetrable and rigid. Hence the total field satisfies the homogeneous Dirichlet boundary condition
\begin{equation*}
 {u}=0\quad\text{on}~ \Gamma.
\end{equation*}
Clearly, the displacement of the scattered field $u^{\rm sc}:=u-u^{\rm in}$ satisfies the following boundary value problem
\be\label{tf}
 \mu\Delta{u^{\rm sc}}+(\lambda+\mu)\nabla
\nabla\cdot{u^{\rm sc}} + \omega^2{u^{\rm sc}}=0\quad\text{in}~ D,\qquad
u^{\rm sc}=-u^{\rm in}\quad\mbox{on}\quad\Gamma.
\en

We may also consider a spherical point source incidence  given by the Green tensor of the Navier equation in $\R^3$, i.e.,
\be\label{source-incidence}
u^{\rm in}(x) = \text{G}( x, y),\quad x\in D\backslash\{y\}, \quad y\in D,
\en
where
\begin{equation}\label{gtn}
  \text{G}( x, y)=\frac{1}{\mu}g_{\rm s}( x, y)\text{I}
 +\frac{1}{\omega^2}\nabla_{y}\nabla^\top_{
y}(g_{\rm s}(x, y)-g_{\rm p}( x,
y)).
\end{equation}
Here $\text{I}$ is the identity matrix and
\begin{equation}\label{gn}
g_{\rm p}(x, y)=\frac{1}{4\pi}\frac{e^{{\rm
i}\kappa_{\rm p}|x-y|}}{|x-y|}, \quad g_{\rm s}(x, y)=\frac{1}{4\pi}\frac{e^{{\rm
i}\kappa_{\rm s}|x-y|}}{|x-y|}
\end{equation}
are the fundamental solutions of the three dimensional Helmholtz equations with the compressional and shear wave numbers, respectively. The incident field  (\ref{source-incidence}) satisfies the three dimensional Navier equation:
\begin{align*}
\mu\Delta{u}^{\rm
in}+(\lambda+\mu)\nabla \nabla\cdot{u}^{\rm in} +
\omega^2{u}^{\rm in}=\delta(x-y)\text{I}\quad\text{in}~ \mathbb R^3\backslash\{y\}.
\end{align*}

Since the domain $D$ is unbounded, a radiation condition must be imposed at infinity to ensure the well-posedness of the boundary value problem \eqref{tf}. Following \cite{eh-siaa12}, we propose a radiation condition based on the upward angular spectrum representation (UASR) for solutions of the scalar Helmholtz equation \cite{cm-siaa05}.

We begin with the decomposition of the scattered field into a sum of its compressional and shear parts
\begin{align}\label{hd}
u^{\rm sc}  = \frac{1}{\rm i}(\nabla \varphi + \nabla \times  \psi), \quad \nabla \cdot  \psi = 0,
\end{align}
where the scalar function $\varphi$ and the vector function $\psi$ satisfy the homogeneous Helmholtz equations
\begin{equation*}
 \Delta\varphi+\kappa^2_{\rm p} \varphi=0, \quad \Delta  \psi+\kappa^2_{\rm s}  \psi=0 \quad \text{in}~D.
\end{equation*}

Denote by $\hat{v}$ the Fourier transform of $v$ in $\mathbb R^2$, i.e.,
\[
\hat{v}( \xi) =\mathcal{F}v(\xi):=\frac{1}{2\pi} \int_{\mathbb R^2} v(x^{\prime}) e^{-{\rm i}x^{\prime}\cdot  \xi}{\rm d}x^{\prime},\quad \xi=(\xi_1, \xi_2)\in \R^2.
\]
Taking the Fourier transform of \eqref{hd} and assuming that $\varphi, \psi$ fulfill the UASR for the Helmholtz equations in $U_b$, we obtain
\be\label{re}
\varphi(x^{\prime},x_3)=\frac{1}{2\pi} \int_{\mathbb R^2} \hat{\varphi} ( \xi,b) e^{{\rm i}\beta ( \xi)(x_3-b)}
e^{{\rm i} \xi \cdot x^{\prime}}{\rm d} \xi, \quad
 \psi(x^{\prime},x_3)= \frac{1}{2\pi}\int_{\mathbb R^2} \hat{ \psi} ( \xi,b) e^{{\rm i}\gamma ( \xi)(x_3-b)}
e^{{\rm i} \xi \cdot x^{\prime}}{\rm d} \xi,
\en
where
\begin{align*}
 \beta( \xi):=
 \begin{cases}
   (\kappa^2_{\rm p} - | \xi|^2)^{1/2},\quad
&| \xi|<\kappa_{\rm p},\\
  {\rm i} (| \xi|^2 - \kappa^2_{\rm p})^{1/2},\quad
&| \xi|>\kappa_{\rm p},
 \end{cases}
\end{align*}
and
\begin{align*}
 \gamma( \xi):=
 \begin{cases}
   (\kappa^2_{\rm s} - | \xi|^2)^{1/2},\quad
&| \xi|<\kappa_{\rm s},\\
  {\rm i} (| \xi|^2 - \kappa^2_{\rm s})^{1/2},\quad
&| \xi|>\kappa_{\rm s}.
 \end{cases}
\end{align*}

Denote
\begin{align*}
A_{{\rm p}}(\xi) = \hat{\varphi} ( \xi,b), \quad \tilde{\boldsymbol A_{{\rm s}}}(\xi) = \hat{ \psi} ( \xi,b).
\end{align*}
Substituting \eqref{re} into \eqref{hd}, we obtain
\begin{align}\label{uprc1}
u^{\rm sc}(x) =\frac{1}{2\pi} \int_{\mathbb R^2} \left[ A_{\rm p}(\xi)\,(\xi,\beta)^{\top} e^{{\rm i}\beta (x_3-b)}
 + \boldsymbol A_{{\rm s}}(\xi)\, e^{{\rm i}\gamma (x_3-b)}\right]
e^{{\rm i} \xi \cdot x^{\prime}}{\rm d} \xi,
\end{align}
where  $\boldsymbol A_{{\rm s}} = (A^{(1)}_{\rm s}, A^{(2)}_{\rm s}, A^{(3)}_{\rm s})^{\top}(\xi):=(\xi,\gamma)^{\top}\times \tilde{\boldsymbol A_{{\rm s}}}(\xi)$. It follows from \eqref{uprc1} and the orthogonality $(\xi, \gamma)\cdot \boldsymbol A_{\rm s}^{\top}=0$ that
\begin{align*}
\begin{bmatrix}
\hat{u}^{\rm sc}(\xi, b) \\ 0
\end{bmatrix}
 =
\begin{bmatrix}
\xi_1 & 1 & 0 & 0\\
\xi_2 & 0 & 1 & 0\\
\beta & 0 & 0 & 1 \\
0 & \xi_1 & \xi_2 & \gamma
\end{bmatrix}
\begin{bmatrix}
A_{\rm p}(\xi)\\[5pt]
\boldsymbol A_{\rm s}^{\top}(\xi)
\end{bmatrix}
:=\tilde{\mathbb{D}}(\xi)\,\boldsymbol A(\xi),
\end{align*}
which gives
\be\label{A}
\boldsymbol A(\xi)=\begin{bmatrix}
A_{\rm p}\\[5pt]
\boldsymbol A_{\rm s}^{\top}
\end{bmatrix}(\xi)=\tilde{\mathbb{D}}^{-1}(\xi)\,
\begin{bmatrix}\hat{u}^{sc}(\xi,b) \\ 0 \end{bmatrix}
=\mathbb{D}(\xi)\, \hat{u}^{\rm sc}(\xi,b).
\en
Here $\mathbb{D}$ is a $4\times 3$ matrix given by
\ben
\mathbb{D}(\xi)=\frac{1}{\beta \gamma+|\xi|^2}\begin{bmatrix}
\xi_1 & \xi_2 & \gamma \\
\beta \gamma+\xi_2^2 & -\xi_1\xi_2 & -\xi_1\gamma \\
-\xi_1\xi_2 & \beta\gamma+\xi^2 & -\xi_2 \gamma \\
-\xi_1\beta & -\xi_2 \beta & |\xi|^2
\end{bmatrix}.
\enn

Using \eqref{uprc1}--\eqref{A} yields an expression of $u^{\rm sc}$ in $U_b$ in terms of the Fourier transform of the Dirichlet data $u(x', b)$:
\begin{align}\label{uprc}
 u^{\rm sc}(x)=\frac{1}{2\pi}\int_{\mathbb R^2}\Big\{
\frac{1}{\beta\,\gamma+| \xi|^2}  \Big(M_{\rm p}(\xi) e^{{\rm i} (\xi\cdot x^{\prime}+ \beta (x_3-b))} + M_{\rm s}(\xi) e^{{\rm i} (\xi\cdot x^{\prime}+\gamma (x_3-b))}  \Big)   \hat{ u}^{\rm sc}(\xi, b)
  \Big\} {\rm d} \xi,
\end{align}
where
\be\label{Mp}
M_{\rm p}(\xi)=(\xi_1,\xi_2,\beta)\otimes (\xi_1, \xi_2,\gamma):=\begin{bmatrix}
\xi_1^2        &  \xi_1\xi_2    & \xi_1 \gamma\\
\xi_1\xi_2     & \xi_2^2        & \xi_2 \gamma\\
\xi_1\beta  & \xi_2\beta  & \beta \gamma
\end{bmatrix}
\en
and
\be\label{Ms}
M_{\rm s}(\xi)=
\begin{bmatrix}
\beta\gamma+\xi_2^2 & -\xi_1\xi_2               & -\gamma \xi_1\\
-\xi_1 \xi_2             & \beta\gamma+\xi_1^2  & -\gamma \xi_2\\
-\xi_1\beta           & -\xi_2\beta            & |\xi|^2
\end{bmatrix}=(\beta \gamma+|\xi|^2)\,\textbf{\rm I} -M_{\rm p}(\xi).
\en
Define $M_{\rm p}^+=M_{\rm p}/(\beta\gamma+|\xi|^2)$. We can rewrite \eqref{uprc} into
\begin{align}\label{uprc2}
 u^{\rm sc}(x)=\frac{1}{2\pi}\int_{\mathbb R^2}\Big\{
M^+_{\rm p}(\xi) e^{{\rm i} (\xi\cdot x^{\prime}+ \beta (x_3-b))} + \Big(\textbf{\rm I}-M^+_{\rm p}(\xi)\Big) e^{{\rm i} (\xi\cdot x^{\prime}+\gamma (x_3-b))}  \Big\}\widehat{u}^{\rm sc}(\xi, b)
  {\rm d} \xi.
\end{align}

The representation \eqref{uprc} or \eqref{uprc2}, which is referred to as the UASR for elastic waves, is the upward radiation condition.
The downward ASR of $u^{sc}$ in $x_3<b$ can be similarly derived and written as
\be\nonumber
 u^{\rm sc}(x)\!\!\!\!&=&\!\!\!\frac{1}{2\pi}\int_{\mathbb R^2}\Big\{
\frac{1}{\beta\,\gamma+| \xi|^2}  \Big(M^{(D)}_{\rm p}(\xi) e^{{\rm i} (\xi\cdot x^{\prime}- \beta(x_3-b))} + M^{(D)}_{\rm s}(\xi) e^{{\rm i} (\xi\cdot x^{\prime}-\gamma (x_3-b))}  \Big)   \hat{u}^{\rm sc}(\xi, b)
  \Big\} {\rm d} \xi \\ \label{dprc}
\!\!\!\! &=&\!\!\!\!\frac{1}{2\pi}\int_{\mathbb R^2}\Big\{
M^-_{\rm p}(\xi) e^{{\rm i} (\xi\cdot x^{\prime}- \beta (x_3-b))} + \Big(\textbf{\rm I}-M^-_{\rm p}(\xi)\Big) e^{{\rm i} (\xi\cdot x^{\prime}-\gamma (x_3-b))}  \Big\}\hat{u}^{\rm sc}(\xi, b)
  {\rm d} \xi.
\en
Here $M_{\rm p}^-(\xi):= M_{\rm p}^{(D)}(\xi)/(\beta \gamma+|\xi|^2)$,
\begin{equation}\label{MpsD}
M_{\rm p}^{(D)}(\xi):=\begin{bmatrix}
\xi_1^2        &  \xi_1\xi_2    & -\xi_1 \gamma\\
\xi_1\xi_2     & \xi_2^2        & -\xi_2 \gamma\\
-\xi_1\beta  & -\xi_2\beta  & \beta \gamma
\end{bmatrix},\quad
M_{\rm s}^{(D)}(\xi):=
\begin{bmatrix}
\beta\gamma+\xi_2^2 & -\xi_1\xi_2               & \gamma \xi_1\\
-\xi_1 \xi_2             & \beta\gamma+\xi_1^2  & \gamma \xi_2\\
\xi_1\beta           & \xi_2\beta            & |\xi|^2
\end{bmatrix}.
\end{equation}

If $u^{\rm sc}$ is quasi-biperiodic on $\Gamma_b$, then the ASR of $u^{\rm sc}$ in a half space is equivalent to the Rayleigh expansion of $u^{\rm sc}$ (see \cite{Aren99,eh-mmmas12,eh-2010}).
We say $u^{\rm sc}$ is quasi-biperiodic
 with the phase-shift $\alpha=(\alpha_1,\alpha_2)\in \R^2$ in the variable $x^{\prime}$, if $u^{\rm sc}(x^{\prime}+2\pi n,b) = e^{{\rm i}2\pi\alpha\cdot n}u^{\rm sc}(x^{\prime},b)$ for all $n=(n_1,n_2)\in \mathbb Z^2$. Therefore, $u^{\rm sc}(x',b)$ admits the Fourier series expansion
\begin{align}\label{fe}
u^{\rm sc}(x^{\prime},b) = \sum_{n\in \mathbb Z^2}u^{\rm sc}_n(b) e^{{\rm i}\alpha_n\cdot x^{\prime}},\quad x'\in \R^2,
\end{align}
where $\alpha_n=\alpha + n$ and $u^{\rm sc}_n(b)$ is the Fourier coefficient of $u^{\rm sc}$ on $\Gamma_b$, given by
\[
u^{\rm sc}_n(b) = \frac{1}{2\pi}\int_0^{2\pi}\int_0^{2\pi}u^{\rm sc}(x^{\prime},b)e^{-{\rm i}\alpha_n\cdot x^{\prime}}{\rm d}x^{\prime}.
\]
Substituting \eqref{fe} into \eqref{uprc} and noting that the Fourier transform of $e^{{\rm i}\alpha_n\cdot x^{\prime}}$ is $2\pi \delta(\xi - \alpha_n)$, we obtain
\begin{align}\label{Ray}
u^{\rm sc}(x)&=\frac{1}{2\pi}\int_{\mathbb R^2}\Big\{
\frac{1}{\beta\,\gamma+| \xi|^2}  \Big(M_{\rm p}(\xi) e^{{\rm i} (\xi\cdot x^{\prime}+ \beta\, (x_3-b))} + M_{\rm s}(\xi) e^{{\rm i} (\xi\cdot x^{\prime}+\gamma\, (x_3-b))}  \Big)   \hat{ u}^{\rm sc}(\xi, b)
  \Big\} {\rm d} \xi\notag\\
  & = \sum_{n\in\mathbb Z^2}\int_{\mathbb R^2}\Big\{
\frac{1}{\beta\,\gamma+| \xi|^2}  \Big(M_{\rm p}(\xi) e^{{\rm i} (\xi\cdot x^{\prime}+ \beta\, (x_3-b))} + M_{\rm s}(\xi) e^{{\rm i} (\xi\cdot x^{\prime}+\gamma\, (x_3-b))}  \Big)  \delta(\xi - \alpha_n)
  \Big\} u^{\rm sc}_n(b){\rm d} \xi\notag\\
  & = \sum_{n\in\mathbb Z^2}
\frac{1}{\beta_n\,\gamma_n+|\alpha_n|^2}  \Big(M_{{\rm p},n} e^{{\rm i} (\alpha_n\cdot x^{\prime}+ \beta_n\, (x_3-b))} + M_{{\rm s},n} e^{{\rm i} (\alpha_n\cdot x^{\prime}+\gamma_n\, (x_3-b))}  \Big)u^{\rm sc}_n(b)\notag\\
& = \sum_{n\in\mathbb Z^2}
\frac{ (\alpha_n,\gamma_n)^\top\cdot u^{\rm sc}_n(b)}{\beta_n\,\gamma_n+|\alpha_n|^2}(\alpha_n, \beta_n)^\top e^{{\rm i} (\alpha_n\cdot x^{\prime}+ \beta_n\, (x_3-b))} \notag\\
&\qquad + \frac{1}{\beta_n\,\gamma_n+|\alpha_n|^2} \Big[(\alpha_n, \gamma_n)^\top\times \Big(u^{\rm sc}_n(b)\times(\alpha_n, \beta_n)^\top\Big)\Big] e^{{\rm i} (\alpha_n\cdot x^{\prime}+\gamma_n\, (x_3-b))},
\end{align}
where
\begin{align*}
\beta_n = \beta(\alpha_n), \quad \gamma_n = \gamma(\alpha_n), \quad M_{{\rm p},n}=M_{\rm p}(\alpha_n), \quad M_{{\rm s},n}=M_{\rm s}(\alpha_n).
\end{align*}
The representation \eqref{Ray} is the upward Rayleigh expansion of $u^{\rm sc}$ in $x_3>b$.
Using the vector identity
\ben
&&(\alpha_n, \gamma_n)^\top\times \Big(u^{\rm sc}_n(b)\times(\alpha_n, \beta_n)^\top\Big)\\
&=&
\Big((\alpha_n, \gamma_n)\cdot (\alpha_n, \beta_n)^\top\Big)
u^{\rm sc}_n(b)
-\Big((\alpha_n, \gamma_n)^\top\cdot u^{\rm sc}_n(b)\Big)(\alpha_n, \beta_n)^\top \\
&=& (\beta_n \gamma_n+|\alpha_n|^2)\,u^{\rm sc}_n(b)
-\Big((\alpha_n, \gamma_n)^\top\cdot u^{\rm sc}_n(b)\Big)(\alpha_n, \beta_n)^\top,
\enn
we may rewrite \eqref{Ray} into
\be\label{Rayleigh}
u^{sc}(x)=\sum_{n\in\mathbb Z^2}
A_{{\rm p},n}(\alpha_n, \beta_n)^\top e^{{\rm i} (\alpha_n\cdot x^{\prime}+ \beta_n (x_3-b))}
 +\textbf{A}_{{\rm s},n} e^{{\rm i} (\alpha_n\cdot x^{\prime}+\gamma_n (x_3-b))},
\en
where
\ben
A_{{\rm p},n}=\frac{ (\alpha_n,\gamma_n)^\top\cdot u^{\rm sc}_n(b)}{\beta_n\gamma_n+|\alpha_n|^2}
\in \C,\quad \textbf{A}_{{\rm s},n}=u^{\rm sc}_n(b)-A_{{\rm p},n}(\alpha_n, \beta_n)^\top\in \C^3.
\enn

Clearly, it holds that $(\alpha_n, \gamma_n)\cdot \textbf{A}_{{\rm s},n}=0$ for all $n\in \mathbb Z^2$.
The representation (\ref{Rayleigh}) is the reduction of the UASR (see (\ref{uprc}) and (\ref{uprc1})) to the Rayleigh expansion in quasi-periodic spaces. The equivalence of the downward radiation conditions can be justified in the same manner.

The rough surface scattering problem can be stated as follows: Given a plane incident wave \eqref{inc} or a point incident wave \eqref{gtn}, the scattering problem is to find the scattered field $u^{\rm sc}$ of the boundary value problem for the Navier equation \eqref{tf} in a distributional sense, such that the upward radiation condition \eqref{uprc} is satisfied.

The goal of this paper is twofold:
 \begin{enumerate}
 \item Prove uniqueness of the solution in $H^1(S_b)^3$ for any $b>f^+$ (see Section \ref{uniqueness});

 \item For locally perturbed flat surfaces, prove existence of the Kupradze radiating solution $u^{\rm sc}-u^{\rm re}\in H^1_{\rm loc}(D)^3$, where $u^{\rm re}$ denotes the reflected wave field corresponding to the unperturbed flat surface (see Section \ref{existence}).
 \end{enumerate}

In the subsequent section, we will introduce a Dirichlet-to-Neumann (DtN) map on the artificial flat surface $\Gamma_b$ for some $b>f^+$ and investigate its mapping properties.

\section{Dirichlet-to-Neumann map}\label{sec:3}
\setcounter{equation}{0}

Recall that the traction operator on a surface is defined as
\begin{align*}
Tu:=2\mu\partial_{\nu}u + \lambda (\nabla\cdot u)\nu + \mu \nu\times (\nabla\times u),
\end{align*}
where $\nu=(\nu_1,\nu_2,\nu_3)$ stands for the normal vector on the surface. Given $b>f^+$, the DtN map for our rough surface scattering problem is defined as follows.

\begin{defi}
For $v\in H^{1/2}(\Gamma_b)^3$, the upward DtN map $\mathcal{T}v$ is defined as $T u^{\rm sc}$ on $\Gamma_b$, where $u^{\rm sc}$ is the unique upward radiation solution of the homogeneous Navier equation in $U_b$  satisfying  $u^{\rm sc} = v$ on $\Gamma_b$. More explicitly, we have
\begin{align}\label{traction}
Tu:=2\mu\partial_{3}u + \lambda (\nabla\cdot u)(0,0,1)^{\top} + \mu (0,0,1)^{\top}\times (\nabla\times u).
\end{align}
\end{defi}

Note that the above definition is well defined, because $u^{\rm sc}$ can be uniquely determined in $U_b$ via the formula \eqref{uprc}.
Next we derive an explicit representation of the upward DtN map $\mathcal{T}$ and show some of its properties.

Applying the traction operator $T$ \eqref{traction} to \eqref{uprc} and letting $x_3 = b$, we get
\be\label{Fv}
\mathcal{F}[(Tu^{\rm sc})|_{\Gamma_b}](\xi)={\rm i}
\begin{bmatrix}
2\mu\beta\xi_1 & \mu\gamma & 0 & \mu\xi_1\\[5pt]
2\mu\beta\xi_2 & 0 & \mu\gamma & \mu\xi_2\\[5pt]
2\mu\beta^2 + \lambda \kappa_{\rm p}^2 &  0 & 0 & 2\mu\gamma
\end{bmatrix}
\begin{bmatrix}
A_{\rm p}\\[5pt]
\boldsymbol A_{\rm s}^{\top}
\end{bmatrix}=:{\rm i}G(\xi)\boldsymbol A(\xi).
\en
Recalling  $\boldsymbol A(\xi)=\mathbb{D}(\xi)\hat{u}^{\rm sc}(\xi,b)$ in (\ref{A}), we have
\be\label{M}
\mathcal{F}[(Tu^{\rm sc})|_{\Gamma_b}](\xi)={\rm i}G(\xi)\mathbb{D}(\xi) \hat{u}^{\rm sc}(\xi,b)={\rm i}M(\xi)\hat{u}^{\rm sc}(\xi,b),
\en
where $M(\xi)=G(\xi) \mathbb{D}(\xi)\in \C^{3\times 3}$ is given by
\begin{align*}
M(\xi) = \frac{1}{|\xi|^2+\beta\gamma}
\begin{bmatrix}
\mu[(\gamma - \beta)\xi_2^2 + \kappa_{\rm s}^2\beta] & -\mu\xi_1\xi_2(\gamma - \beta) & (2\mu| \xi|^2 - \omega^2 + 2\mu\beta \gamma)\xi_1 \\[5pt]
-\mu\xi_1\xi_2(\gamma - \beta) & \mu[(\gamma - \beta)\xi_1^2 + \kappa_{\rm s}^2\beta] & (2\mu| \xi|^2 - \omega^2 + 2\mu\beta\gamma)\xi_2\\[5pt]
- (2\mu| \xi|^2 - \omega^2 + 2\mu\beta \gamma)\xi_1 & -(2\mu| \xi|^2 - \omega^2 + 2\mu\beta\gamma)\xi_2 & \gamma \omega^2
\end{bmatrix}.
\end{align*}
Taking the inverse Fourier transform gives
\ben
[\mathcal{T} {u}^{\rm sc}(x',b)](x')=\frac{{\rm i}}{2\pi}
\int_{\mathbb R^2}G(\xi)\boldsymbol A(\xi)e^{{\rm i}  \xi \cdot x^{\prime}}{\rm d} \xi=\frac{{\rm i}}{2\pi}
\int_{\mathbb R^2}M( \xi)
\hat{u}^{\rm sc} (\xi, b) e^{{\rm i}  \xi \cdot x^{\prime}}{\rm d} \xi,
\enn
where the matrix function $M$ is given in \eqref{M}. Since $v=u^{\rm sc}|_{\Gamma_b}$, we obtain the upward DtN map
\be\label{DtN}
\mathcal{T} v(x')=\frac{{\rm i}}{2\pi}
\int_{\mathbb R^2}M( \xi)
\hat{v}(\xi) e^{{\rm i}  \xi \cdot x^{\prime}}{\rm d} \xi.
\en
The boundary operator $\mathcal{T}$ is non-local and is equivalent to the upward radiation condition (\ref{uprc}).

Similarly, we may show that the downward DtN map takes the form
\be\label{Mn}
\mathcal{T}^- v (x')=\frac{{\rm i}}{2\pi} \int_{\mathbb R^2}M^-(\xi)
\hat{v}(\xi) e^{{\rm i}  \xi \cdot x^{\prime}}{\rm d} \xi,
\en
with
\ben
M^-(\xi) = \frac{1}{|\xi|^2+\beta\gamma}
\begin{bmatrix}
-\mu[(\gamma - \beta)\xi_2^2 + \kappa_{\rm s}^2\beta] & \mu\xi_1\xi_2(\gamma - \beta) & (2\mu| \xi|^2 - \omega^2 + 2\mu\beta \gamma)\xi_1 \\[5pt]
\mu\xi_1\xi_2(\gamma - \beta) & -\mu[(\gamma - \beta)\xi_1^2 + \kappa_{\rm s}^2\beta] & (2\mu| \xi|^2 - \omega^2 + 2\mu\beta\gamma)\xi_2\\[5pt]
- (2\mu| \xi|^2 - \omega^2 + 2\mu\beta \gamma)\xi_1 & -(2\mu| \xi|^2 - \omega^2 + 2\mu\beta\gamma)\xi_2 & \gamma \omega^2
\end{bmatrix}.
\enn

In comparision with the matrix $M$ for the upward DtN (cf. (\ref{M})), the parameters $\beta(\xi),\gamma(\xi)$ are  replaced by
$-\beta(\xi), -\gamma(\xi)$ in the definition of $M^-(\xi)$, respectively.

\begin{lemm}\label{dtn}
Let $M(\xi)$ be defined in \eqref{M} and let $b>f^+$.
\begin{enumerate}
\item Given a fixed frequency $\omega>0$, we have $ \Re(-{\rm i}M)(\xi)>0$ for all sufficiently large $|\xi|$.
\item The DtN map $\mathcal{T}$ is a bounded operator from $H^{1/2}(\Gamma_b)^3$ to $H^{-1/2}(\Gamma_b)^3$.
\end{enumerate}
\end{lemm}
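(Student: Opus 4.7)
The plan is to analyze the matrix-valued Fourier multiplier $M(\xi)$ in \eqref{M} on the three natural regions determined by $\kappa_{\rm p}$ and $\kappa_{\rm s}$: the propagating regime $|\xi|<\kappa_{\rm p}$, the mixed regime $\kappa_{\rm p}<|\xi|<\kappa_{\rm s}$, and the evanescent regime $|\xi|>\kappa_{\rm s}$. Claim (1) is asymptotic in $|\xi|\to\infty$ and needs only the evanescent regime, while claim (2), via Plancherel applied to the representation $\mathcal{F}(\mathcal{T}v)={\rm i}M(\xi)\hat v(\xi)$, reduces to the pointwise operator-norm bound $\|M(\xi)\|\le C(1+|\xi|)$ on $\R^2$.

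For (1), I would write $\beta={\rm i}\tilde\beta$ and $\gamma={\rm i}\tilde\gamma$ in the evanescent regime with $\tilde\beta=(|\xi|^2-\kappa_{\rm p}^2)^{1/2}$ and $\tilde\gamma=(|\xi|^2-\kappa_{\rm s}^2)^{1/2}$, so that $\beta\gamma=-\tilde\beta\tilde\gamma$ is real and $|\xi|^2+\beta\gamma=|\xi|^2-\tilde\beta\tilde\gamma>0$. Substituting into \eqref{M} and using $\Re(-{\rm i}M)=\Im M$, the entries $(1,3),(2,3),(3,1),(3,2)$ of $M$ are purely real (they involve only $|\xi|^2$, $\beta\gamma$, $\omega^2$) and therefore vanish in $\Im M$; thus $\Re(-{\rm i}M)$ is block diagonal. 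Its $(3,3)$ block equals $\tilde\gamma\omega^2/(|\xi|^2-\tilde\beta\tilde\gamma)>0$, and a short direct calculation gives its $2\times 2$ upper-left block trace proportional to $(\tilde\gamma-\tilde\beta)|\xi|^2+2\kappa_{\rm s}^2\tilde\beta$ and determinant proportional to $\kappa_{\rm s}^2\tilde\beta[(\tilde\gamma-\tilde\beta)|\xi|^2+\kappa_{\rm s}^2\tilde\beta]$. Using the asymptotics $\tilde\gamma-\tilde\beta\sim(\kappa_{\rm p}^2-\kappa_{\rm s}^2)/(2|\xi|)$ and $\tilde\beta\sim|\xi|$, both quantities are eventually positive, so $\Re(-{\rm i}M)$ is positive definite for all sufficiently large $|\xi|$.

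For (2), the multiplier bound $\|M(\xi)\|\le C(1+|\xi|)$ rests on two observations. First, the denominator $|\xi|^2+\beta\gamma$ is bounded below in modulus by a positive constant on $\R^2$: it is strictly positive real in the propagating regime (equal to $\kappa_{\rm p}\kappa_{\rm s}$ at $\xi=0$), has real part $|\xi|^2\ge\kappa_{\rm p}^2$ in the mixed regime where $|\xi|^2+\beta\gamma=|\xi|^2+{\rm i}\tilde\beta\gamma$, and satisfies the strict positivity shown in (1) in the evanescent regime while tending to $(\kappa_{\rm p}^2+\kappa_{\rm s}^2)/2$ at infinity; continuity of $\beta\gamma$ across the transition circles $|\xi|=\kappa_{\rm p},\kappa_{\rm s}$ (where $\beta$ or $\gamma$ vanishes but the denominator remains $\kappa_{\rm p}^2$ or $\kappa_{\rm s}^2$) closes the gaps. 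Second, every numerator entry of \eqref{M} is $O(1+|\xi|)$: the apparently quadratic terms $(\gamma-\beta)\xi_j^2$ are saved by the cancellation $\gamma-\beta=O(1/|\xi|)$, and $(2\mu|\xi|^2-\omega^2+2\mu\beta\gamma)\xi_j$ is linear in $|\xi|$ because $|\xi|^2+\beta\gamma=O(1)$ at infinity. The resulting pointwise bound, combined with Plancherel, yields continuity of $\mathcal{T}\colon H^{1/2}(\Gamma_b)^3\to H^{-1/2}(\Gamma_b)^3$.

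The main obstacle is the case-by-case algebra in part (1): extracting $\Im M$ and verifying positive definiteness of its $2\times 2$ block depends on the nontrivial asymptotic identity $(\tilde\gamma-\tilde\beta)|\xi|^2+\kappa_{\rm s}^2\tilde\beta\sim(\kappa_{\rm p}^2+\kappa_{\rm s}^2)|\xi|/2$, which in turn relies on two-term expansions of $\tilde\beta$ and $\tilde\gamma$. A more conceptual subtlety is that $|\xi|^2+\beta\gamma$ has no real zeros only under the rigid (Dirichlet) boundary condition; for the traction-free case the analogous Rayleigh determinant vanishes on a circle in $\xi$-space, which would invalidate the present multiplier argument, in agreement with the remarks in the introduction on free surfaces.
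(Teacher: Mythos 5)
The paper itself does not give a proof of this lemma; it defers to the quasi-periodic analogue in the cited reference. So I will assess your argument on its own merits. Your part~(2) is sound: the denominator $|\xi|^2+\beta\gamma$ is indeed uniformly bounded away from zero on all of $\mathbb{R}^2$ by the case analysis you give, and the cancellation $\gamma-\beta=O(1/|\xi|)$ together with $|\xi|^2+\beta\gamma=O(1)$ yields $\|M(\xi)\|\le C(1+|\xi|)$, which via Plancherel gives continuity $H^{1/2}\to H^{-1/2}$.

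Your part~(1), however, contains a genuine gap in the claim that $\Re(-{\rm i}M)$ is block diagonal. In the evanescent regime the entries $M_{13}, M_{23}, M_{31}, M_{32}$ are real, but they satisfy the skew relations $M_{31}=-M_{13}$ and $M_{32}=-M_{23}$. Combined with the fact that $M_{11}, M_{12}, M_{22}, M_{33}$ are purely imaginary, this means $M$ is skew-Hermitian for $|\xi|>\kappa_{\rm s}$, hence $-{\rm i}M$ is a \emph{Hermitian} matrix whose $(1,3)$ and $(2,3)$ entries are purely imaginary and \emph{nonzero} (they equal $-{\rm i}M_{13}$, $-{\rm i}M_{23}$). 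Positivity of the form $\Re\bigl(\hat v^*(-{\rm i}M)\hat v\bigr)$ for complex $\hat v$ is positivity of this full Hermitian $3\times 3$ matrix, not of its entry-wise real part; taking entry-wise real parts throws away exactly those off-diagonal contributions, which produce real terms of the form $2M_{13}\,\Im(\overline{\hat v_1}\hat v_3)$ in the quadratic form. Since $M_{13}$ grows linearly in $|\xi|$ (comparable to the diagonal entries), this coupling cannot be neglected. Concretely, rotating so that $\xi=(r,0)$, the Hermitian matrix decouples into the component $\{2\}$ and the $2\times 2$ block on $\{1,3\}$, and positivity of the latter requires the extra inequality $(-{\rm i}M_{11})(-{\rm i}M_{33})>|M_{13}|^2$, i.e.\ $\mu\kappa_{\rm s}^2\tilde\beta\,\omega^2\tilde\gamma>\bigl(2\mu(r^2-\tilde\beta\tilde\gamma)-\omega^2\bigr)^2 r^2$. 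Asymptotically both sides are $\sim\text{const}\cdot r^2$, and the inequality reduces (using $\omega^2=\mu\kappa_{\rm s}^2$ and $r^2-\tilde\beta\tilde\gamma\to\tfrac12(\kappa_{\rm p}^2+\kappa_{\rm s}^2)$) to $\kappa_{\rm s}^4>\kappa_{\rm p}^4$, which does hold since $\lambda+\mu>0$. So the conclusion of the lemma is correct, but your block-diagonal reduction misses the essential $\{1,3\}$ (and $\{2,3\}$) coupling; the positivity of the upper-left $2\times 2$ block and of the $(3,3)$ entry alone do not suffice.
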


The proof of Lemma \ref{dtn} relies essentially on properties of the matrix $M$ and can be carried out following almost the same arguments in the quasi-periodic case of \cite{eh-mmmas12}. We omit the details for brevity.

\section{Uniqueness}\label{cuniqueness}
\setcounter{equation}{0}

In this section, we study the uniqueness for our boundary value problem if $\Gamma$ is the graph of a uniformly Lipschitz continuous function $f$, i.e.,
\[
\Gamma = \{x\in \mathbb R^3: x_3 = f(x^{\prime}), \, x^{\prime}=(x_1, x_2) \in \mathbb R^2\},
\]
and there exists a constant $L>0$ such that
\[
|f(x^{\prime}) - f(y^{\prime})|\leq L\,|x^{\prime} - y^{\prime}|,  \quad \forall x^{\prime}, y^{\prime} \in \mathbb R^2.
\]

First, we investigate the uniqueness when $f$ is a $C^2$-smooth function over $\mathbb R^2$. Denote the unit normal vector on $\Gamma \cup \Gamma_b$ by $\nu: = (\nu_1, \nu_2, \nu_3)$ pointing into the region of $x_3>b$ on $\Gamma_b$ and into the interior of $D$ on $\Gamma$.
In the rest of this subsection, we assume that $u^{\rm in}=0$ and thus $u=u^{\rm sc}$ is a radiation solution in $S_b$ for any $b>f^+$. We shall prove that $u\equiv 0$ in $D$, depending on the geometry of $\partial D$. This result implies that elastic surface waves are ruled out if the rigid scattering surface is given by some uniformly Lipschitz function.
Our uniqueness proof depends on a Rellich-type identity for the Navier equation in the unbounded strip $S_b$. The Rellich-type identity was first used in \cite{Kirsch93} to prove uniqueness of the acoustic scattering by smooth periodic sound-soft curves and in \cite{J.M2002} for treating periodic Lipschitz graphs. Besides, it gave a priori estimates and explicit bounds on the solution of the acoustic rough surface scattering problems \cite{cm-siaa05}. We refer to \cite{Peter} for more general Rellich's identities in a bounded domain.

 \begin{lemm}\label{rellich}
If $u \in H^1(S_b)^3$ and $f$ is a $C^2$-smooth function, the following Rellich identity holds:
\begin{align*}
&2\Re \int_{S_b} (\mu \Delta u + (\lambda + \mu)\nabla \cdot u + \omega^2 u)\cdot \partial_3 \bar{u}{\rm d}x\\
&= \Big(-\int_{\Gamma} + \int_{\Gamma_b}\Big) \Big\{2\Re(Tu \cdot \partial_3 \bar{u}) - \nu_3 \mathcal{E}(u,\bar{u}) + \omega^2|u|^2 \Big\}{\rm d}s.
\end{align*}
\end{lemm}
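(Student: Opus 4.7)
The plan is to derive the identity from Betti's first formula for the Lam\'e operator $Lu := \mu\Delta u + (\lambda+\mu)\nabla(\nabla\cdot u)$ applied with the test field $v=\partial_3 u$, and then to exploit the fact that the associated elastic energy form has real coefficients so that its real part is a pure $\partial_3$-derivative.

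Concretely, I would first record Betti's integration-by-parts formula on the truncated domain $S_b^R:=S_b\cap\{|x'|<R\}$,
\[
\int_{S_b^R} (Lu)\cdot\bar v\, dx + \int_{S_b^R} \mathcal{E}(u,\bar v)\, dx = \int_{\partial S_b^R} (Tu)\cdot\bar v\, ds,
\]
with $\mathcal{E}(u,v):=\lambda\,(\nabla\cdot u)(\nabla\cdot v)+2\mu\sum_{i,j}\epsilon_{ij}(u)\,\epsilon_{ij}(v)$ and $\epsilon_{ij}(u)=\tfrac12(\partial_i u_j+\partial_j u_i)$, where $\partial S_b^R$ carries the outward unit normal of $S_b^R$. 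The algebraic heart of the argument is then the identity
\[
2\Re\,\mathcal{E}(u,\partial_3\bar u)=\partial_3\,\mathcal{E}(u,\bar u),\qquad 2\Re\,(u\cdot\partial_3\bar u)=\partial_3|u|^2,
\]
which follows from $2\Re(w\,\partial_3\bar w)=\partial_3|w|^2$ applied componentwise together with the reality and symmetry of $\mathcal{E}$. Specialising $v=\partial_3 u$, taking twice the real part of Betti's formula, and absorbing the $\omega^2 u$ contribution into the left-hand side converts the bulk energy and $\omega^2$-terms into volume integrals of pure $\partial_3$-derivatives; the divergence theorem then turns them into boundary integrals of $n_3\,\mathcal{E}(u,\bar u)$ and $\omega^2 n_3|u|^2$, where $n$ is the outward unit normal to $S_b^R$.

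Finally I would reconcile $n$ with the normal $\nu$ fixed in the paper: on $\Gamma_b$ one has $n=\nu$ so the $\Gamma_b$-integrand appears with a plus sign; on $\Gamma$ one has $n=-\nu$, and because $Tu$ is linear in $\nu$ while $n_3=-\nu_3$, every boundary term on $\Gamma$ flips sign, producing the $-\int_\Gamma$ piece in the statement. To pass from $S_b^R$ to $S_b$ and from smooth $u$ to $u\in H^1(S_b)^3$, I would rely on Fubini and the $L^1$-integrability of all integrands (each being quadratic in $\nabla u$) to select $R_k\to\infty$ along which the lateral contribution at $\{|x'|=R_k\}$ vanishes, and on the $C^2$-smoothness of $f$ together with interior and boundary $H^2$-elliptic regularity to make $Tu|_\Gamma$ and $\partial_3 u|_\Gamma$ meaningful as $L^2_{\mathrm{loc}}$-traces. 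The main obstacle is precisely this boundary trace issue on $\Gamma$: without the $C^2$ hypothesis the normal $\nu$ is not continuous and $Tu$ is not pointwise defined, which is exactly why the lemma is restricted to $C^2$ graphs, with the Lipschitz case handled separately by approximation in the surrounding uniqueness argument.
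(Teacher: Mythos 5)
Your argument is correct and follows the same route as the paper: test the Navier equation against $\partial_3\bar u$, use Betti's formula together with the algebraic identities $2\Re\,\mathcal{E}(u,\partial_3\bar u)=\partial_3\mathcal{E}(u,\bar u)$ and $2\Re(u\cdot\partial_3\bar u)=\partial_3|u|^2$ to convert the bulk terms into $\partial_3$-derivatives, and invoke $H^2$ regularity from the $C^2$ boundary to justify the traces. The only cosmetic difference is that the paper multiplies by a smooth radial cut-off $\chi_A(|x|)\partial_3\bar u$ and lets $A\to\infty$, whereas you truncate sharply at $|x'|=R$ and extract a good subsequence $R_k\to\infty$ by a Fubini argument; both achieve the same end.
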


\begin{proof}
The proof is similar as that in \cite[Lemma 6]{eh-siaa12}. We sketch it here. By standard elliptic regularity, we see that $u \in H^2(S_b)^3$. For $A \geq 1$, we choose a cut-off function $\chi_A (r) \in C_0^{\infty}(\mathbb R^+)$ with $r = |x|$ such that $\chi_A(r) = 1$ if $r \leq A$, $\chi_A(r) = 0$ if $r \geq A + 1$, $0\leq \chi_A(r)\leq 1$ if $A < r \leq A + 1$, and $\|\chi'_A(r)\|\leq C$ for some fixed $C$ independent of $A$. Multiplying both sides of \eqref{tf} by the test function $\chi_A(r)\partial_3 \bar{u}$, using the integration by parts, and letting $A \rightarrow +\infty$, we may obtain the desired identity.
\end{proof}

Since $u$ satisfies the Navier equation in $D$, it follows from Lemma \ref{rellich} that
\be\nonumber
\int_{\Gamma}  \Big\{2\Re(Tu \cdot \partial_3 \bar{u}) - \nu_3 \mathcal{E}(u,\bar{u}) + \omega^2|u|^2 \Big\}{\rm d}s
=\int_{\Gamma_b} \Big\{2\Re(Tu \cdot \partial_3 \bar{u}) - \nu_3 \mathcal{E}(u,\bar{u}) + \omega^2|u|^2 \Big\}{\rm d}s.
\en
In the following lemma, we simply the left hand side of the above identity by using the boundary condition $u=0$ on $\Gamma$ and simply the right hand side of the above identity by the radiation condition of $u=u^{\rm sc}$.

\begin{lemm}\label{lem:3.6}
(i) Under the assumptions of Lemma \ref{rellich}, it holds that
\ben
\int_{\Gamma}  \Big\{2\Re(Tu \cdot \partial_3 \bar{u}) - \nu_3 \mathcal{E}(u,\bar{u}) + \omega^2|u|^2 \Big\}{\rm d}s
= \int_{\Gamma} \mu|\partial_{\nu}u|^2\nu_3 + (\lambda + \mu) |\nabla \cdot u|^2 \nu_3 {\rm d}s.
\enn
(ii) Let $u=u^{\rm sc}$ satisfy \eqref{uprc} in $x_3>b$ with the parameter-dependent coefficients $A_{\rm p}(\xi)$ and $\textbf{A}_{\rm s}(\xi)\in \C^{3\times 1}$ for $\xi\in \R^3$. We have
\be\nonumber
&&\int_{\Gamma_b} \Big\{2\Re(Tu \cdot \partial_3 \bar{u}) - \nu_3 \mathcal{E}(u,\bar{u}) + \omega^2|u|^2 \Big\}{\rm d}s\\ \label{eq:4}
&&\;= 2\omega^2 \Big\{ \int_{|\xi|<\kappa_{\rm p}} \beta^2(\xi) |A_{\rm p}(\xi)|^2\,{\rm d}\xi+
\int_{|\xi|<\kappa_{\rm s}} \gamma^2(\xi) |\textbf{A}_{\rm s}(\xi)|^2\,{\rm d}\xi
\Big\},\\ \label{eq:a}
&&\Im\int_{\Gamma_b} Tu \cdot \bar{u}{\rm d}s =
\int_{|\xi|<\kappa_{\rm p}} \omega^2\beta( \xi)|A_{\rm p}(\xi)|^2 {\rm d} \xi + \int_{|\xi|<\kappa_{\rm s}}\mu\gamma(\xi)|\boldsymbol A_{\rm s}( \xi)|^2{\rm d} \xi.
\en
\end{lemm}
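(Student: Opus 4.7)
The plan is to handle parts (i) and (ii) by distinct mechanisms: (i) is a pointwise algebraic computation on $\Gamma$ exploiting the Dirichlet data, while (ii) is a modal/Plancherel computation on $\Gamma_b$.

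For (i), I use that $u|_\Gamma = 0$ forces all tangential derivatives of $u$ to vanish, so pointwise on $\Gamma$ one has $\partial_k u_j = \nu_k (\partial_\nu u)_j$. This yields the identities $\nabla \cdot u = \nu \cdot \partial_\nu u$, $\nabla \times u = \nu \times \partial_\nu u$ and $\partial_3 u = \nu_3 \partial_\nu u$. The vector identity $\nu \times(\nu \times \partial_\nu u) = (\nu \cdot \partial_\nu u)\nu - \partial_\nu u$ then collapses the traction to $Tu = \mu \partial_\nu u + (\lambda + \mu)(\nabla \cdot u)\nu$. A parallel computation of the symmetric gradient gives $\sum_{j,k}|\epsilon_{jk}(u)|^2 = \tfrac{1}{2}(|\partial_\nu u|^2 + |\nabla \cdot u|^2)$, hence $\mathcal{E}(u, \bar u) = \mu|\partial_\nu u|^2 + (\lambda + \mu)|\nabla \cdot u|^2$ on $\Gamma$. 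Substituting and using $u = 0$ on $\Gamma$ makes the $\omega^2|u|^2$ term drop out while $\nu_3 \mathcal{E}(u,\bar u)$ absorbs exactly half of $2\Re(Tu \cdot \partial_3 \bar u)$, leaving the claimed integrand $\nu_3\{\mu|\partial_\nu u|^2 + (\lambda+\mu)|\nabla \cdot u|^2\}$.

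For (ii), I apply Plancherel on $\Gamma_b$ (where $\nu_3 = 1$), decompose $u = u_p + u_s$ via the Helmholtz decomposition in \eqref{hd}--\eqref{uprc1}, and expand each quadratic form into the four blocks (pp), (ps), (sp), (ss). On $\Gamma_b$ the Fourier symbols are $\hat u_p(\xi) = A_{\rm p}(\xi,\beta)^\top$ and $\hat u_s(\xi) = \boldsymbol A_{\rm s}$, with $\widehat{\partial_3 u_p} = i\beta \hat u_p$, $\widehat{\partial_3 u_s} = i\gamma \hat u_s$; the tractions $\widehat{Tu_p}$ and $\widehat{Tu_s}$ are read off directly from the matrix $G$ in \eqref{Fv}, and simplify substantially because $u_p$ is curl-free and $u_s$ divergence-free. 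For each diagonal block the propagating part (where $\beta$, respectively $\gamma$, is real) yields exactly the $|A_{\rm p}|^2$ and $|\boldsymbol A_{\rm s}|^2$ integrals on the right of \eqref{eq:4} and \eqref{eq:a}, while the evanescent part is purely real for $\int Tu \cdot \bar u$ (giving $\Im = 0$ in \eqref{eq:a}) and collapses in the Rellich combination $2\Re(Tu\cdot\partial_3\bar u) - \mathcal{E}(u,\bar u) + \omega^2|u|^2$ via the algebraic identity $(2|\xi|^2 - \kappa_{\rm p}^2)^2 - \kappa_{\rm p}^4 = 4|\xi|^2(|\xi|^2 - \kappa_{\rm p}^2)$, with an analogous identity for the shear block.

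The cross blocks (ps) and (sp) are handled together: the transversality $(\xi,\gamma)\cdot \boldsymbol A_{\rm s} = 0$, equivalently $\xi_1 A_{{\rm s},1}+\xi_2 A_{{\rm s},2} = -\gamma A_{{\rm s},3}$, combined with the Lam\'e identity $(\lambda+2\mu)\kappa_{\rm p}^2 = \mu\kappa_{\rm s}^2 = \omega^2$, forces their combined contribution to vanish in both \eqref{eq:4} and \eqref{eq:a}. The main obstacle is the bookkeeping: two regimes (propagating versus evanescent) for each of $\beta$ and $\gamma$, four blocks, and several purely algebraic cancellations. All of them rest on the dispersion relations $\beta^2 + |\xi|^2 = \kappa_{\rm p}^2$, $\gamma^2 + |\xi|^2 = \kappa_{\rm s}^2$ and the Lam\'e identity above; once these invariants are used systematically each cancellation reduces to a short computation, and the confinement of the surviving contributions to the cones $|\xi|<\kappa_{\rm p}$ and $|\xi|<\kappa_{\rm s}$ corresponds precisely to the reality of $\beta$ and $\gamma$ there.
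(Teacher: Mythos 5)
Your approach is essentially the same as the paper's. Part~(i) is identical: both arguments use $\partial_k u_j = \nu_k\,\partial_\nu u_j$ on $\Gamma$ (since $u=0$ there), the consequent identity $\nu\times(\nabla\times u)=\nu\,\nabla\cdot u-\partial_\nu u$ collapsing $Tu$ to $\mu\,\partial_\nu u+(\lambda+\mu)(\nabla\cdot u)\nu$, and the simplification $\mathcal{E}(u,\bar u)=\mu|\partial_\nu u|^2+(\lambda+\mu)|\nabla\cdot u|^2$, so that $Tu\cdot\partial_3\bar u=\nu_3\,\mathcal{E}(u,\bar u)$. For part~(ii) the underlying computation is also the same---Plancherel on $\Gamma_b$, the dispersion relations $|\xi|^2+\beta^2=\kappa_{\rm p}^2$, $|\xi|^2+\gamma^2=\kappa_{\rm s}^2$, the transversality $(\xi,\gamma)\cdot\boldsymbol A_{\rm s}=0$, and the Lam\'e relations---but the bookkeeping differs: the paper works directly with the $4$-vector $\boldsymbol A=(A_{\rm p},\boldsymbol A_{\rm s}^\top)^\top$ and a family of $4\times4$ matrices $M_j$, decomposing each into diagonal and off-diagonal pieces, whereas you split $u=u_{\rm p}+u_{\rm s}$ and compute the four blocks (pp), (ps), (sp), (ss) separately. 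Both organizations are valid and carry the same information; the paper's matrix format makes the off-diagonal cancellation $\Re(2\tilde{J}_{1,2}-2\mu\tilde{J}_{2,2}+\omega^2\tilde{J}_{4,2})=0$ explicit in one line, while your block split makes the propagating/evanescent dichotomy and the role of the identity $(2|\xi|^2-\kappa_{\rm p}^2)^2-\kappa_{\rm p}^4=4|\xi|^2(|\xi|^2-\kappa_{\rm p}^2)$ more transparent.

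One point worth flagging: if you actually carry out your (ss) block, you find $2\Re(Tu_{\rm s}\cdot\partial_3\bar u_{\rm s})-\mathcal{E}(u_{\rm s},\bar u_{\rm s})+\omega^2|u_{\rm s}|^2 = 2\mu\gamma^2|\boldsymbol A_{\rm s}|^2$ for $|\xi|<\kappa_{\rm s}$ (using $\mathcal{E}(u_{\rm s},\bar u_{\rm s})=\mu\kappa_{\rm s}^2|\boldsymbol A_{\rm s}|^2=\omega^2|\boldsymbol A_{\rm s}|^2$), whereas the (pp) block yields $2\omega^2\beta^2|A_{\rm p}|^2$. The constant in front of the shear integral is $2\mu$, not $2\omega^2$; this is consistent with the $\mu$ and $\omega^2$ appearing separately in \eqref{eq:a}, and indicates that the prefactor ``$2\omega^2$'' in \eqref{eq:4} distributed over both integrals is most likely a typo in the stated lemma (the second integral should carry $2\mu$). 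Since $\mu>0$ this does not affect the sign conclusions used in the uniqueness proof, but your sketch should not quote the stated factor without checking it.
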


\begin{proof}
(i) Since $u=0$ on $\Gamma$, a direct calculation shows that on $\Gamma$ (see also \cite[Lemma 5]{eh-mmmas12}),
\ben
\nu \cdot \partial_3 \bar{u} \nabla \cdot u = \nu_3 |\nabla \cdot u|^2,\quad \partial_3 u = \nu_3 \partial_{\nu}u,\quad
 \partial_{\nu} u + \nu \times (\nabla \times u) - \nu \nabla \cdot u = 0.
\enn Hence, by the definitions of the traction operator $T$ and the bilinear form $\mathcal{E}(\cdot,\cdot)$, we get
\ben
 Tu \cdot \partial_3 \bar{u} = \mathcal{E}(u,\bar{u})=\nu_3 \mu|\partial_{\nu}u|^2\nu_3 + (\lambda + \mu) |\nabla \cdot u|^2 \nu_3,
\enn
which proves the first assertion.

(ii) The proof of the second  assertion depends on the upward ASR of $u=u^{\rm sc}$ and the Parseval formula.

It follows from \eqref{Fv} and the Fourier transform of $Tu$ in terms of $A_{\rm p}$ and $\boldsymbol A_{\rm s}$ on $\Gamma_b$ that
$\widehat{Tu}(\xi) ={\rm i} G(\xi) \boldsymbol A(\xi)$, where $\boldsymbol A$ is defined in \eqref{A}.
By \eqref{uprc}, the Fourier transform $\widehat{\partial_j u}$ of $\partial_j u$ on $\Gamma_b$  can be represented by
\begin{align*}
\widehat{\partial_j u} = H_j(\xi)\,\boldsymbol A(\xi),\quad j=1,2,3,
\end{align*}
where $H_j$ are 3-by-4 matrices defined by
\begin{align*}
H_1={\rm i}\begin{bmatrix}
\xi_1^2 & \xi_1 & 0 & 0\\[5pt]
\xi_1 \xi_2 & 0 & \xi_1 & 0\\[5pt]
\xi_1 \beta & 0 & 0 & \xi_1
\end{bmatrix},\;
H_2={\rm i}\begin{bmatrix}
\xi_1 \xi_2& \xi_2 & 0 & 0\\[5pt]
\xi_2^2 & 0 & \xi_2 & 0\\[5pt]
\xi_2 \beta & 0 & 0 & \xi_2
\end{bmatrix},\;
H_3 = {\rm i}\begin{bmatrix}
\beta \xi_1 & \gamma & 0 & 0\\[5pt]
\beta \xi_2 & 0 & \gamma & 0\\[5pt]
\beta^2 & 0 & 0 & \gamma
\end{bmatrix}.
\end{align*}
Direct calculations show that
\begin{align*}
&H_1^*H_1 = \begin{bmatrix}
\xi_1^2(|\xi|^2 + |\beta|^2)& \xi_1^3 & \xi_1^2 \xi_2 & \xi_1^2 \bar{\beta}\\[5pt]
\xi_1^3 & \xi_1^2 & 0 & 0\\[5pt]
\xi_1^2\xi_2 & 0 & \xi_1^2 & 0\\[5pt]
\xi_1^2\beta & 0 & 0 & \xi_1^2
\end{bmatrix},
H_2^*H_2 = \begin{bmatrix}
\xi_2^2(|\xi|^2 + |\beta|^2)& \xi_1\xi_2^2 &  \xi_2^3 & \xi_2^2 \bar{\beta}\\[5pt]
\xi_1\xi_2^2 & \xi_2^2 & 0 & 0\\[5pt]
\xi_2^3 & 0 & \xi_2^2 & 0\\[5pt]
\xi_2^2\beta & 0 & 0 & \xi_2^2
\end{bmatrix},\\
&H_3^*H_3 = \begin{bmatrix}
|\beta|^2(|\xi|^2 + |\beta|^2) & \gamma\bar{\beta}\xi_1 & \gamma\bar{\beta}\xi_2 & \gamma \bar{\beta}^2\\[5pt]
\bar{\gamma}\beta \xi_1 & |\gamma|^2 & \gamma & 0\\[5pt]
\bar{\gamma}\beta \xi_2 & 0 & |\gamma|^2 & 0\\[5pt]
\bar{\gamma}\beta^2 & 0 & 0 & |\gamma|^2
\end{bmatrix}.
\end{align*}
Moreover we have
\begin{align}\label{M1}
M_1:= H_1^* G = \begin{bmatrix}
2\mu |\beta|^2 (|\xi|^2 + |\beta|^2) + \lambda \kappa_{\rm p}^2\bar{\beta}^2 & \mu\bar{\beta}\xi_1\gamma & \mu\bar{\beta}\xi_2\gamma & \mu\bar{\beta}|\xi|^2 + 2\mu\bar{\beta}^2\gamma\\[5pt]
2\mu\beta\bar{\gamma}\xi_1 & \mu|\gamma|^2 & 0 & \mu\xi_1\bar{\gamma}\\[5pt]
2\mu\beta\bar{\gamma}\xi_2 & 0 & \mu|\gamma|^2 & \mu\xi_2\bar{\gamma} \\[5pt]
2\mu\beta^2\bar{\gamma} + \lambda \kappa_{\rm p}^2 \bar{\gamma} & 0 & 0 & 2\mu|\gamma|^2
\end{bmatrix}
\end{align}
and
\begin{align*}
M_2&:= H_1^*H_1 + H_2^*H_2 + H_3^*H_3 \\
&= \begin{bmatrix}
(|\xi|^2 + |\beta|^2)^2& \xi_1(|\xi|^2 + \gamma \bar{\beta}) & \xi_2(|\xi|^2 + \gamma \bar{\beta}) & \bar{\beta}(|\xi|^2 + \gamma \bar{\beta})\\[5pt]
\xi_1(|\xi|^2 + \beta \bar{\gamma}) & |\xi|^2 + |\gamma|^2 & 0 & 0\\[5pt]
\xi_2(|\xi|^2 + \beta \bar{\gamma}) & 0 & |\xi|^2 + |\gamma|^2 & 0\\[5pt]
\beta(|\xi|^2 + \beta \bar{\gamma}) & 0 & 0 & |\xi|^2 + |\gamma|^2
\end{bmatrix}.
\end{align*}

The Fourier transforms of $u$, $\nabla\cdot u$ and $\nabla\times u$ on $\Gamma_b$ are given respectively by
\begin{align*}
\hat{u}( \xi, b)=\mathbb{D}_1(\xi) \boldsymbol A(\xi),\quad
\widehat{\nabla \cdot  u} = H_4(\xi) \boldsymbol A(\xi),\quad
\widehat{\nabla \times u}=(\xi, \gamma)^\top\times \boldsymbol A_{\rm s}(\xi),
\end{align*}
where
\ben
\mathbb{D}_1(\xi)=\begin{bmatrix}
\xi_1 & 1 & 0 & 0\\
\xi_2 & 0 & 1 & 0\\
\beta & 0 & 0 & 1
\end{bmatrix},\quad
H_4 = {\rm i}\begin{bmatrix}
\kappa_{\rm p}^2 & \xi_1\xi_2^2 &  \xi_2^3 & \xi_2^2 \bar{\beta}\\[5pt]
\xi_1\xi_2^2 & \xi_2^2 & 0 & 0\\[5pt]
\xi_2^3 & 0 & \xi_2^2 & 0\\[5pt]
\xi_2^2\beta & 0 & 0 & \xi_2^2
\end{bmatrix}.
\enn
Simple calculations yield
\begin{align*}
M_3:= H_4^*H_4 = \begin{bmatrix}
\kappa_{\rm p}^4 & 0 &  0 & 0\\[5pt]
0 & 0& 0 & 0\\[5pt]
0 & 0 & 0 & 0\\[5pt]
0 & 0 & 0 & 0
\end{bmatrix},\quad
M_4:= \mathbb{D}_1^*\mathbb{D}_1 =
\begin{bmatrix}
|\xi|^2 + |\beta|^2 & \xi_1 & \xi_2 & \bar{\beta}\\[5pt]
\xi_1 & 1 & 0 & 0 \\[5pt]
\xi_2 & 0 & 1 & 0 \\[5pt]
\beta & 0 & 0 & 1
\end{bmatrix},
\end{align*}
and $|\widehat{\nabla \times u}|^2 = (|\xi|^2 + |\gamma|^2)|\boldsymbol A_{\rm s}|^2$ due to the orthogonal identity
 $(\xi, \gamma) \cdot \boldsymbol A_{\rm s}=0$. Denote
\begin{align*}
M_5:= \begin{bmatrix}
0 & 0 & 0 & 0\\[5pt]
0 & |\xi|^2 + |\gamma|^2 & 0 & 0 \\[5pt]
0 & 0 & |\xi|^2 + |\gamma|^2 & 0 \\[5pt]
0 & 0 & 0 & |\xi|^2 + |\gamma|^2
\end{bmatrix}.
\end{align*}

By the definition of $M_j, j=1,2\cdots, 5$ and the Parseval formula, we obtain
\ben
\int_{\Gamma_b}Tu \cdot \partial_3 \bar{u}{\rm d}s&=&\int_{\R^2}  M_1(\xi) \boldsymbol A(\xi)\cdot\overline  {\boldsymbol A}(\xi)\;{\rm d}\xi\,,\\
\int_{\Gamma_b}\mathcal{E}(u,\overline{u}){\rm d}s&=&\int_{\R^2} \Big( 2\mu M_2(\xi)+\lambda M_3(\xi)-\mu M_5(\xi)\Big) \boldsymbol A(\xi)\cdot\overline{  \boldsymbol A}(\xi)\;{\rm d}\xi\,,\\
\int_{\Gamma_b} |u|^2\,{\rm d}s&=&\int_{\R^2} M_4(\xi)\boldsymbol A(\xi)\cdot\overline{\boldsymbol A}(\xi)\;{\rm d}\xi.
\enn
Hence,
\begin{align}\label{W}
\int_{\Gamma_0} 2\Re (Tu \cdot \partial_3 \bar{u}) - \mathcal{E}(u, \bar{u}) + \omega^2 |u|^2 {\rm d}s
= \int_{\mathbb R^2}  [\Re W(\xi)] \boldsymbol A(\xi)\cdot\overline{ \boldsymbol A}(\xi) {\rm d}\xi,
\end{align}
where
\[
W := 2M_1 - 2\mu M_2 - \lambda M_3 + \mu M_5 + \omega^2 M_4.
\]

Next we need to calculate $\Re W$.
To obtain the real part of $M_1$, we decompose it into the sum $J_{1,1} + J_{1,2} + J_{1,3}$, where (e.g., (\ref{M1}))
\begin{align*}
J_{1,1} &= \begin{bmatrix}
2\mu |\beta|^2 (|\xi|^2 + |\beta|^2) + \lambda \kappa_{\rm p}^2\bar{\beta}^2 & 0 & 0 & 0\\[5pt]
0 & \mu|\gamma|^2 & 0 & 0\\[5pt]
0 & 0 & \mu|\gamma|^2 & 0\\[5pt]
0 & 0 & 0 & \mu|\gamma|^2
\end{bmatrix},\\
J_{1,2} &= \begin{bmatrix}
0 & \mu\bar{\beta}\xi_1\gamma & \mu\bar{\beta}\xi_2\gamma & \mu\bar{\beta}|\xi|^2 + 2\mu\bar{\beta}^2\gamma\\[5pt]
2\mu\beta\bar{\gamma}\xi_1 & 0 & 0 & 0\\[5pt]
2\mu\beta\bar{\gamma}\xi_2 & 0 & 0 & 0 \\[5pt]
2\mu\beta^2\bar{\gamma} + \lambda \kappa_{\rm p}^2 \bar{\gamma} & 0 & 0 & 0
\end{bmatrix},\\
J_{1,3} &= \begin{bmatrix}
0 & 0 & 0 &0\\[5pt]
0 & 0 & 0 & \mu\xi_1\bar{\gamma}\\[5pt]
0 & 0 & 0 & \mu\xi_2\bar{\gamma} \\[5pt]
0 & 0 & 0 & \mu|\gamma|^2
\end{bmatrix},\; \tilde{J}_{1,2} = \begin{bmatrix}
0 & 0 & 0 & (2\mu(|\xi|^2 + \bar{\beta}\gamma) - \omega^2) \overline{\beta}\\[5pt]
0 & 0 & 0 & 0\\[5pt]
0 & 0 & 0 & 0 \\[5pt]
-(2\mu(|\xi|^2 +\beta\bar{\gamma}) - \omega^2) \overline{\gamma} & 0 & 0 & 0
\end{bmatrix}.
\end{align*}
Using the relations
\[
\xi_1 A_{\rm s}^{(1)}+ \xi_2 A_{\rm s}^{(2)} + \gamma A_{\rm s}^{(3)} = 0, \quad |\xi|^2 + \beta^2 = \kappa_{\rm p}^2, \quad |\xi|^2 + \gamma^2 = \kappa_{\rm s}^2
\]
we obtain
\begin{align}\label{12}
\langle J_{1,2} \boldsymbol A, \boldsymbol A \rangle = \langle \tilde{J}_{1,2} \boldsymbol A, \boldsymbol A \rangle, \quad \langle J_{3,2} \boldsymbol A, \boldsymbol A \rangle = 0.
\end{align}

Similarly, we decompose $M_2$ into the sum $J_{2,1} + J_{2,2}$, where
\begin{align*}
J_{2,1} &= \begin{bmatrix}
(|\xi|^2 + |\beta|^2)^2& 0 & 0 & 0\\[5pt]
0 & |\xi|^2 + |\gamma|^2 & 0 & 0\\[5pt]
0 & 0 & |\xi|^2 + |\gamma|^2 & 0\\[5pt]
0 & 0 & 0 & |\xi|^2 + |\gamma|^2
\end{bmatrix},\\
J_{2,2} &= \begin{bmatrix}
0& \xi_1(|\xi|^2 + \gamma \bar{\beta}) & \xi_2(|\xi|^2 + \gamma \bar{\beta}) & \bar{\beta}(|\xi|^2 + \gamma \bar{\beta})\\[5pt]
\xi_1(|\xi|^2 + \beta \bar{\gamma}) & 0 & 0 & 0\\[5pt]
\xi_2(|\xi|^2 + \beta \bar{\gamma}) & 0 & 0 & 0\\[5pt]
\beta(|\xi|^2 + \beta \bar{\gamma}) & 0 & 0 & 0
\end{bmatrix}.\\
\end{align*}
A simple calculation yields
\begin{align}\label{2}
\langle J_{2,2} \boldsymbol A, \boldsymbol A \rangle = \langle \tilde{J}_{2,2} \boldsymbol A\rangle,
\end{align}
where $\tilde{J}_{2,2}$ is the $4 \times 4$ matrix whose $(1,4)$-th entry is $-2\mu(|\xi|^2 + \gamma \bar{\beta})(\bar{\beta} - \gamma)$ and $(4,1)$-th entry is $-2\mu(|\xi|^2 + \bar{\gamma} \beta)(\beta - \bar{\gamma})$. We decompose $M_4$ into the sum $J_{4,1} + J_{4,2}$, where
\begin{align*}
J_{4,1} &= \begin{bmatrix}
|\xi|^2 + |\beta|^2 & 0 & 0 & 0\\[5pt]
0 & 1 &  & 0 \\[5pt]
0 & 0 & 1 & 0 \\[5pt]
0 & 0 & 0 & 1
\end{bmatrix},\quad
J_{4,2} = \begin{bmatrix}
0 & \xi_1 & \xi_2 & \bar{\beta}\\[5pt]
\xi_1 & 0 & 0 & 0 \\[5pt]
\xi_2 & 0 & 0 & 0 \\[5pt]
\beta & 0 & 0 & 0
\end{bmatrix}.
\end{align*}
Then we obtain
\begin{align}\label{3}
\langle J_{4,2} \boldsymbol A, \boldsymbol A \rangle = \langle \tilde{J}_{4,2} \boldsymbol A\rangle,
\end{align}
where $\tilde{J}_{4,2}$ is the $4 \times 4$ matrix whose $(1,4)$-th entry is $\bar{\beta} - \gamma$ and $(4,1)$-th entry is $\beta - \bar{\gamma}$.

Combining \eqref{1} and \eqref{2}--\eqref{3}, we deduce from \eqref{W} that
\begin{align*}
\Big\langle \Re W(\xi) \boldsymbol A, \boldsymbol A \Big\rangle &= \Big\langle Q(\xi) \boldsymbol A, \boldsymbol A\Big\rangle+ \Big\langle
\Re\Big(2\tilde{J}_{1,2}-2\mu \tilde{J}_{2,2} + \omega^2 \tilde{J}_{4,2}\Big) \boldsymbol A, \boldsymbol A\Big\rangle.
\end{align*}
with $Q=(Q_{i,j})_{i,j=1}^4:=\Re\Big(2J_{1,1}-2\mu J_{2,1}-\lambda M_3 + \mu M_5 + \omega^2 J_{4,1}\Big)$.
Moreover, we can obtain $\Re\Big(2\tilde{J}_{1,2}-2\mu \tilde{J}_{2,2} + \omega^2 \tilde{J}_{4,2}\Big) = 0$,
$Q_{i,j}=0$ if $i\neq j$ and
\begin{align*}
Q_{1,1} =
\begin{cases}
2\omega^2 \beta^2, \quad &|\xi|< \kappa_{\rm p},\\
0, \quad &|\xi| > \kappa_{\rm p},
\end{cases}\quad
Q_{i, i} =
\begin{cases}
2\omega^2 \gamma^2, \quad &|\xi|< \kappa_{\rm s},\\
0, \quad &|\xi| > \kappa_{\rm s},
\end{cases}\quad\mbox{if}\quad i=2,3,4.
\end{align*}
Hence,
\begin{align*}
\int_{\mathbb R^2}\Big\langle \Re W(\xi) \boldsymbol A, \boldsymbol A \Big\rangle{\rm d}\xi &= \int_{\mathbb R^2}\Big\langle Q \boldsymbol A, \boldsymbol A\Big\rangle {\rm d}\xi\\
&= 2\omega^2 \Big( \int_{|\xi|<\kappa_1} \beta^2(\xi)|A_{\rm p}(\xi)|^2 {\rm d}\xi  + \int_{|\xi|<\kappa_2} \gamma^2(\xi)|\boldsymbol A_{\rm s}(\xi)|^2 {\rm d}\xi \Big),
\end{align*}
which together with \eqref{W} proves the relation \eqref{eq:4}.

To prove the second identity \eqref{eq:a}, we observe that
\be\label{Tuu}
\Im\int_{\Gamma_b} Tu\, \bar{u}\,{\rm d}s=
\Im\int_{\R^2} \langle {\rm i}G \boldsymbol A, \mathbb{D}_1\boldsymbol A \rangle{\rm d}\xi=
\int_{\R^2} \langle (\Re \mathbb{D}_1^* G) \boldsymbol A, \boldsymbol A \rangle{\rm d}\xi,
\en
where
\begin{align*}
\mathbb{D}_1^*G =\begin{bmatrix}
2\mu \beta (|\xi|^2 + |\beta|^2) + \lambda \kappa_{\rm p}^2\bar{\beta} & \mu\xi_1\gamma & \mu\xi_2\gamma & \mu|\xi|^2 + 2\mu\bar{\beta}^2\gamma\\[5pt]
2\mu\beta\xi_1 & \mu\gamma & 0 & \mu\xi_1\\[5pt]
2\mu\beta\xi_2 & 0 & \mu\gamma & \mu\xi_2 \\[5pt]
2\mu\beta^2 + \lambda \kappa_{\rm p}^2  & 0 & 0 & 2\mu\gamma
\end{bmatrix}.
\end{align*}
We decompose $\mathbb{D}_1^*G$ into the sum $J_1 + J_2 + J_3$, where
\begin{align*}
J_1 &= \begin{bmatrix}
2\mu \beta (|\xi|^2 + |\beta|^2) + \lambda \kappa_{\rm p}^2\bar{\beta} & 0 & 0 & 0\\[5pt]
0 & \mu\gamma & 0 & 0\\[5pt]
0 & 0 & \mu\gamma & 0\\[5pt]
0 & 0 & 0 & \mu\gamma
\end{bmatrix},\\
J_2 &= \begin{bmatrix}
0 & \mu\xi_1\gamma & \mu\xi_2\gamma & \mu|\xi|^2 + 2\mu\bar{\beta}\gamma\\[5pt]
2\mu\beta\xi_1 & 0 & 0 & 0\\[5pt]
2\mu\beta\xi_2 & 0 & 0 & 0 \\[5pt]
2\mu\beta^2 + \lambda \kappa_{\rm p}^2  & 0 & 0 & 0
\end{bmatrix},\\
J_3 &= \begin{bmatrix}
0 & 0 & 0 &0\\[5pt]
0 & 0 & 0 & \mu\xi_1\\[5pt]
0 & 0 & 0 & \mu\xi_2 \\[5pt]
0 & 0 & 0 & \mu\gamma
\end{bmatrix}.
\end{align*}
Following a similar the proof of \eqref{12}, we have
\begin{align}\label{1}
\langle J_2 \boldsymbol A, \boldsymbol A \rangle = \langle \tilde{J}_2 \boldsymbol A, \boldsymbol A \rangle, \quad \langle J_3 \boldsymbol A, \boldsymbol A \rangle = 0,
\end{align}
where $\tilde{J}_2$ is the $4 \times 4$ matrix whose $(1,4)$-th entry is $2\mu|\xi|^2+2\mu\bar{\beta}\gamma-\omega^2$ and $(4,1)$-th entry is $-2\mu|\xi|^2-2\mu\beta\bar{\gamma}+\omega^2$, and the other entries are zeros, which imply $\Re\tilde{J}_2 = 0$.  It follows from straightforward calculation that we have
\begin{align*}
\langle \Re J_1 \boldsymbol A, \boldsymbol A\rangle =
\begin{cases}
\omega^2\beta|A_{\rm p}|^2 + \mu \gamma |\boldsymbol A_{\rm s}|^2, \quad &|\xi|<\kappa_{\rm p},\\
\mu \gamma |\boldsymbol A_{\rm s}|^2, \quad &\kappa_{\rm p} \leq |\xi| <\kappa_{\rm s},\\
0, \quad &\kappa_{\rm s}<|\xi|.
\end{cases}
\end{align*}
Following \eqref{Tuu}, we deduce
\begin{align*}
\Im\int_{\Gamma_b} Tu \bar{u}{\rm d}s =  \int_{|\xi|<\kappa_{\rm p}} \omega^2 \beta(\xi)|A_{\rm p}(\xi)|^2 {\rm d}\xi  + \int_{|\xi|<\kappa_{\rm s}} \mu\,\gamma(\xi)|\boldsymbol A_{\rm s}(\xi)|^2 {\rm d}\xi,
\end{align*}
which completes the proof.
\end{proof}

The following lemma plays an important role in the subsequent analysis.
It implies that the upward propagating modes of the compressional and shear parts must vanish, if $u^{\rm in}=0$.

\begin{lemm}\label{L3}
Assume that $u^{\rm in}=0$ and the radiating solution $u^{\rm sc}\in H^1(S_b)^3$ for any $b>f_+$, then
\begin{align*}
A_{\rm p}( \xi)=0 \quad \text{for}~ | \xi|<\kappa_{\rm p} \quad \text{and} \quad \boldsymbol A_{\rm s}( \xi)=0 \quad \text{for}~ | \xi|<\kappa_{\rm s},
\end{align*}
 where $A_{\rm p}( \xi)$ and $\boldsymbol A_{\rm s}( \xi)$ are defined in \eqref{uprc1}.
\end{lemm}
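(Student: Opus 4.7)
The plan is to invoke the Betti--type energy identity for the Navier equation in the strip $S_b$, take imaginary parts, and combine the result with identity~\eqref{eq:a} of Lemma~\ref{lem:3.6}(ii). This route is much cheaper than going through the full Rellich identity, because Lemma~\ref{lem:3.6}(ii) has already translated the boundary sesquilinear form $\int_{\Gamma_b} Tu\cdot\bar u\,ds$ into the Fourier expression in which $A_{\rm p}$ and $\boldsymbol A_{\rm s}$ appear diagonally.

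First I multiply the homogeneous Navier equation satisfied by $u = u^{\rm sc}$ in $S_b$ by $\bar u$ and integrate by parts. The outward unit normal on $\partial S_b$ is $(0,0,1)$ on $\Gamma_b$ and $-\nu$ on $\Gamma$; since $u^{\rm in}=0$ forces $u|_\Gamma=0$, the boundary term on $\Gamma$ vanishes by the trace theorem. This yields the first Betti formula
\begin{align*}
\int_{S_b}\mathcal{E}(u,\bar u)\,dx - \omega^{2}\int_{S_b}|u|^{2}\,dx = \int_{\Gamma_b} Tu\cdot\bar u\,ds.
\end{align*}
Because the Lam\'e parameters $\lambda$, $\mu$ and the frequency $\omega$ are all real, the left-hand side is a real number, so
\begin{align*}
\Im\int_{\Gamma_b} Tu\cdot\bar u\,ds = 0.
\end{align*}
Applying identity~\eqref{eq:a} of Lemma~\ref{lem:3.6} to this imaginary part then gives
\begin{align*}
\int_{|\xi|<\kappa_{\rm p}}\omega^{2}\beta(\xi)|A_{\rm p}(\xi)|^{2}\,d\xi + \int_{|\xi|<\kappa_{\rm s}}\mu\gamma(\xi)|\boldsymbol A_{\rm s}(\xi)|^{2}\,d\xi = 0.
\end{align*}
Since $\beta(\xi)>0$ for $|\xi|<\kappa_{\rm p}$ and $\gamma(\xi)>0$ for $|\xi|<\kappa_{\rm s}$, each integrand is nonnegative, so both integrals must vanish separately. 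This forces $A_{\rm p}(\xi)=0$ a.e.\ on $|\xi|<\kappa_{\rm p}$ and $\boldsymbol A_{\rm s}(\xi)=0$ a.e.\ on $|\xi|<\kappa_{\rm s}$, which is the claim.

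The one place where care is needed is in justifying the first Betti formula on the \emph{unbounded} strip $S_b$: the solution $u\in H^{1}(S_b)^{3}$ need not decay pointwise, so integration by parts is not immediate. I would handle this exactly as in the proof of Lemma~\ref{rellich}, by testing against $\chi_{A}\bar u$, where $\chi_{A}(|x|)$ is a smooth radial cut-off equal to $1$ on $\{|x|\le A\}$, supported in $\{|x|\le A+1\}$, with $\|\chi_{A}'\|_\infty\le C$. The commutator terms generated by $\nabla\chi_{A}$ are controlled by $\|u\|_{H^{1}(\{A\le|x|\le A+1\}\cap S_b)}$, which tends to zero as $A\to\infty$ by absolute continuity of the Lebesgue integral applied to $\|u\|_{H^{1}(S_b)}^{2}<\infty$. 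Passing to the limit recovers the identity in the whole strip, and completes the argument.
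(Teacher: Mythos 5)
Your proposal follows exactly the same route as the paper's proof: apply Betti's formula in the strip $S_b$, observe that the Dirichlet data forces the boundary term on $\Gamma$ to vanish, take imaginary parts to conclude $\Im\int_{\Gamma_b}Tu^{\rm sc}\cdot\bar u^{\rm sc}\,ds=0$, and then invoke the Fourier-side identity~\eqref{eq:a} with $\beta,\gamma>0$ on the propagating ranges. The paper states this more tersely, while you additionally spell out the cut-off justification for integration by parts on the unbounded strip; that is a genuine and worthwhile detail, but it does not change the argument.
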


\begin{proof}
Multiplying the Navier equation in \eqref{tf} by the complex conjugate of $u^{\rm sc}$ and using Betti's formula yield
\ben
0= \int_{S_b} \mathcal{E}(u^{\rm sc}, \overline{u}^{sc}) - \omega^2  u^{\rm sc} \cdot  \overline{u}^{\rm sc}{\rm d}  x - \int_{\Gamma_b}  \overline{u}^{\rm sc} \cdot T {u^{\rm sc}} {\rm d}s,
\enn
where the bilinear form
\ben
\mathcal{E}(u,  v) := 2\mu \sum_{j,k = 1}^3 \partial_k u_j \partial_k v_j + \lambda \nabla \cdot  u \nabla \cdot v - \mu \nabla \times  u \cdot \nabla \times  v,\quad\forall u, v\in H^1(S_b)^3.
\enn
Taking the imaging part and recalling the definition of DtN operator, we obtain
\begin{align*}
0=\Im\int_{\Gamma_b}  \overline{{u}}^{\rm sc} \cdot T {u}^{\rm sc} {\rm d} s=\Im\int_{\Gamma_b}  \overline{{u}}^{\rm sc} \cdot \mathcal T {u}^{\rm sc} {\rm d}s=0,
\end{align*}
which proves the result by noting \eqref{eq:a} with $u=u^{\rm sc}$.
\end{proof}

As seen from Lemma \ref{L3}, the uniqueness does not hold for general rough surfaces. In the following theorem, we investigate the uniqueness under an additional geometrical assumption of the scattering surface.

\begin{theo}\label{uniqueness}
Suppose that $\Gamma$ is the graph of a uniformly Lipschitz function and that $u^{\rm in}=0$. Then $u\equiv0$ in $D$.
\end{theo}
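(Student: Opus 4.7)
The plan is to combine the Rellich identity of Lemma \ref{rellich} with the boundary evaluations in Lemma \ref{lem:3.6} and the vanishing of propagating modes in Lemma \ref{L3} to deduce zero Cauchy data for $u$ on $\Gamma$, after which unique continuation yields $u\equiv 0$ throughout the connected domain $D$.

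First I would dispose of the case where $f$ is of class $C^{2}$, so that Lemma \ref{rellich} applies directly. Since $u$ solves the homogeneous Navier equation, the left-hand side of the Rellich identity vanishes, and inserting Lemma \ref{lem:3.6}(i) into the $\Gamma$-integral (which uses $u=0$ on $\Gamma$) and Lemma \ref{lem:3.6}(ii) into the $\Gamma_{b}$-integral yields
\begin{align*}
\int_{\Gamma}\bigl\{\mu|\partial_{\nu}u|^{2}+(\lambda+\mu)|\nabla\cdot u|^{2}\bigr\}\nu_{3}\,ds
= 2\omega^{2}\!\left(\int_{|\xi|<\kappa_{\rm p}}\!\!\beta^{2}(\xi)|A_{\rm p}(\xi)|^{2}\,d\xi+\int_{|\xi|<\kappa_{\rm s}}\!\!\gamma^{2}(\xi)|\boldsymbol A_{\rm s}(\xi)|^{2}\,d\xi\right).
\end{align*}
Since $u^{\rm in}=0$, Lemma \ref{L3} forces $A_{\rm p}$ and $\boldsymbol A_{\rm s}$ to vanish on the respective propagating regions, so the right-hand side is zero. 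The graph assumption together with a uniform Lipschitz constant $L$ gives $\nu_{3}\ge(1+L^{2})^{-1/2}>0$ almost everywhere on $\Gamma$; combined with $\mu>0$ and $\lambda+\mu>\mu/3>0$ (a consequence of the Lam\'e constraints $\mu>0$, $\lambda+2\mu/3>0$), both terms on the left are non-negative and carry a strictly positive weight, forcing $\partial_{\nu}u=0$ and $\nabla\cdot u=0$ almost everywhere on $\Gamma$.

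Combined with the boundary condition $u=0$ on $\Gamma$, this furnishes vanishing Cauchy data for $u$. I would then invoke unique continuation to conclude $u\equiv 0$ in $D$. For the Navier equation this follows either from Holmgren's theorem applied to the elliptic system with constant-coefficient principal part, or by means of the Helmholtz decomposition $u=\nabla\varphi+\nabla\times\psi$ into compressional and shear components, each of which satisfies a Helmholtz equation with zero Cauchy data on $\Gamma$ so that classical unique continuation applies.

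The remaining task is to extend the first step from $C^{2}$ graphs to uniformly Lipschitz graphs. I would approximate $f$ from inside $D$ by a sequence of smooth functions $f_{n}\in C^{2}(\mathbb R^{2})$ with $f_{n}\ge f$, $f_{n}\to f$ uniformly, and Lipschitz constants bounded by $L$ (for instance by mollification followed by a vertical shift), apply the smooth-case argument on the strip bounded by the associated graph $\Gamma_{n}$ and $\Gamma_{b}$, and pass to the limit. Since $u$ is not zero on $\Gamma_{n}$, the $\Gamma_{n}$-integral does not collapse immediately to the squared-derivative form; however, the $\Gamma_{b}$-side is independent of $n$ and already vanishes by Lemma \ref{L3}. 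I expect the limiting step to be the main technical obstacle: one must pass to the limit in all the boundary terms, using the interior regularity $u\in H^{2}_{\rm loc}(D)^{3}$, the uniform convergence $u|_{\Gamma_{n}}\to 0$ driven by $\Gamma_{n}\to\Gamma$, and standard Lipschitz trace theory to interpret $\partial_{\nu}u$ almost everywhere on $\Gamma$. The strategy closely parallels the two-dimensional argument of \cite{eh-siaa12} and the periodic Lipschitz treatment of \cite{J.M2002}.
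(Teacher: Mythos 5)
Your argument is essentially identical to the paper's proof: both first treat the $C^{2}$ case by combining Lemmas \ref{rellich}, \ref{lem:3.6} and \ref{L3} with the lower bound $\nu_{3}\geq(1+L^{2})^{-1/2}>0$ to obtain vanishing Cauchy data on $\Gamma$ and then apply unique continuation, and both then invoke Ne\v{c}as-type approximation of the Lipschitz graph by smooth surfaces (as in \cite{N-les}, \cite{eh-mmmas12}, \cite{eh-siaa12}) to remove the smoothness assumption. Your observation that $\lambda+\mu>\mu/3>0$ follows from the Lam\'e constraints is a correct and helpful elaboration of a point the paper leaves implicit.
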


\begin{proof}
If $f$ is a $C^2$-smooth function, it follows from Lemmas \ref{rellich}, \ref{lem:3.6} and \ref{L3} that
\be\nonumber
&&\int_{\Gamma} \mu|\partial_{\nu}u|^2\nu_3 + (\lambda + \mu) |\nabla \cdot u|^2 \nu_3 {\rm d}s \\ \nonumber
&=&2\omega^2 \Big\{ \int_{|\xi|<\kappa_{\rm p}} \beta^2(\xi) |A_{\rm p}(\xi)|^2\,{\rm d}\xi+
\int_{|\xi|<\kappa_{\rm s}} \gamma^2(\xi) |\textbf{A}_{\rm s}(\xi)|^2\,{\rm d}\xi
\Big\}\\ \label{eq:3}
&=&0.
\en
The geometric assumption of $\Gamma$ implies that
\ben
\nu_3(x)=\frac{1}{\sqrt{1+|\nabla_{x^{\prime}}f|^2}}>C_L>0
\quad\mbox{on}\,\Gamma,
\enn
where $C_L$ is a constant depending on $L$ only.
Hence, we get $u=\partial_\nu u=0$ on $\Gamma$. As a consequence of the unique continuation in elasticity, it holds that $u\equiv0$ in $D$. This proves the uniqueness for $C^2$-smooth functions. Finally,
the proof can be completed by applying Ne$\breve{\rm c}$as' approach in \cite[Chapter 5]{N-les} of approximating a Lipschitz graph by smooth surfaces. We refer to  \cite{eh-mmmas12} for the application of the Ne$\breve{\rm c}$as' approximation theory to bi-periodic surfaces and  \cite{eh-siaa12} for rough surfaces in two dimensions in elasticity.
\end{proof}

In the proof of Theorem \ref{uniqueness}, the relation (\ref{eq:3}) is derived based on the important identity (\ref{eq:4}). Combined with the identity \eqref{eq:a}, this identity  will be used to prove the existence of solutions to the rough surface scattering problems. We remark that, for the uniqueness proof only, the relation (\ref{eq:3}) can be also obtained in a more straightforward way without using (\ref{eq:4}), which is given as follows.

\begin{proof}
By using Lemma \ref{rellich} and Lemma \ref{lem:3.6} (i), we obtain for each fixed $b>f_+$ that
\be\label{rhs}
\int_{\Gamma} \mu|\partial_{\nu}u|^2\nu_3 + (\lambda + \mu) |\nabla \cdot u|^2 \nu_3 {\rm d}s
= \int_{\Gamma_b} (2\Re(Tu \cdot \partial_3 \bar{u}) - \nu_3 \mathcal{E}(u,\bar{u}) + \omega^2|u|^2) {\rm d}s.
\en
It suffices to show that the right hand side of \eqref{rhs} vanishes.
By Lemma \ref{L3},
\begin{align}
u=u^{\rm sc} &= \int_{|\xi|\geq \kappa_{\rm p}} A_{\rm p}(\xi)\,(\xi,\beta)^{\top} e^{{\rm i}\beta (x_3-b)}
e^{{\rm i} \xi \cdot x^{\prime}}{\rm d} \xi + \int_{|\xi|\geq \kappa_{\rm s}}\boldsymbol A_{{\rm s}}(\xi)\, e^{{\rm i}\gamma (x_3-b)} e^{{\rm i} \xi \cdot x^{\prime}}{\rm d} \xi, \quad x_3\geq b,\notag\\ \label{eq:5}
\widehat{\partial_3u}(\xi, c)&={\rm i} \beta(\xi)\,A_{\rm p}(\xi)\,(\xi,\beta)^{\top} e^{{\rm i}\beta (c-b)}+{\rm i}\gamma(\xi)\,\boldsymbol A_{{\rm s}}(\xi)\, e^{{\rm i}\gamma (c-b)},\quad c>b.
\end{align}
Since the right hand side of \eqref{rhs} does not depend on the choice of $b$, we have for each $c>b$ that
\begin{align}\label{eq:7}
\int_{\Gamma_b} 2\Re(Tu \cdot \partial_3 \bar{u}) - \nu_3 \mathcal{E}(u,\bar{u}) + \omega^2|u|^2 {\rm d}s=
 \int_{\Gamma_c} 2\Re(Tu \cdot \partial_3 \bar{u}) - \nu_3 \mathcal{E}(u,\bar{u}) + \omega^2|u|^2 {\rm d}s.
\end{align}

We first prove the vanishing of the first term on the right hand side of the above identity as $c\rightarrow +\infty$.
Using (\ref{DtN}), (\ref{eq:5}) and Lemma \ref{L3}, we obtain
\begin{align}\nonumber
&\Re\int_{\Gamma_c}Tu \cdot \partial_3 \bar{u}{\rm d}s = \Re\int_{\mathbb R^2}\widehat{\mathcal Tu} \cdot  \overline{\widehat{\partial_3 u}}{\rm d}\xi\\ \nonumber
& =\Im\int_{\mathbb R^2} M(\xi)\Big(A_{\rm p}(\xi)(\xi,\beta)^\top e^{{\rm i}\beta (c-b)} + \boldsymbol A_{\rm s}(\xi) e^{{\rm i}\gamma (c-b)}\Big)\cdot \overline{\Big(\beta A_{\rm p}(\xi)(\xi,\beta)^\top e^{{\rm i}\beta (c-b)} + \gamma \boldsymbol A_{\rm s}(\xi) e^{{\rm i}\gamma (c-b)}\Big)}{\rm d}\xi\\ \nonumber
& =\Im\int_{|\xi|\geq \kappa_{\rm p}} M(\xi)\Big(A_{\rm p}(\xi)(\xi,\beta)^\top\Big)\cdot\overline{\Big( \beta A_{\rm p}(\xi)(\xi,\beta)^\top\Big)}e^{-2(c-b)\sqrt{|\xi|^2 - \kappa_{\rm p}^2}}{\rm d}\xi\\ \nonumber
&\quad +\Im \int_{|\xi|\geq \kappa_{\rm p}} M(\xi)\Big(A_{\rm p}(\xi)(\xi,\beta)^\top\Big)\cdot\overline{\Big( \gamma \boldsymbol A_{\rm s}(\xi)\Big)} e^{-(c-b)\sqrt{|\xi|^2 - \kappa_{\rm p}^2}}e^{-(c-b)\bar{\gamma}}{\rm d}\xi\\ \nonumber
&\quad +\Im \int_{|\xi|\geq \kappa_{\rm p}}M(\xi) \boldsymbol A_{\rm s}(\xi)\cdot\overline{\Big( \beta A_{\rm p}(\xi,\beta)^\top\Big)}e^{-(c-b)\sqrt{|\xi|^2 - \kappa_{\rm p}^2}}e^{-(c-b)\bar{\gamma}}{\rm d}\xi\\ \label{eq:6}
&\quad+ \Im \int_{|\xi|\geq \kappa_{\rm s}} M(\xi)\boldsymbol A_{\rm s}(\xi)\cdot\overline{\Big( \gamma \boldsymbol A_{\rm s}(\xi)\Big)}e^{-2(c-b)\sqrt{|\xi|^2 - \kappa_{\rm s}^2}}{\rm d}\xi,
\end{align}
where the matrix $M$ is given by (\ref{M}), and the dot denotes the inner product over $\R^2$.
For each $\epsilon > 0$, there exists a sufficiently small $\delta>0$, which does not depend on $c$, such that
\begin{align*}
\Im\int_{\kappa_{\rm p} \leq |\xi|\leq \kappa_{\rm p}+\delta} M(\xi)\Big(A_{\rm p}(\xi)(\xi,\beta)^\top\Big) \cdot\overline{\Big(\beta A_{\rm p}(\xi,\beta)^\top\Big)}e^{-2(c-b)\sqrt{|\xi|^2 - \kappa_{\rm p}^2}}{\rm d}\xi < \epsilon.
\end{align*}
On the other hand, we have
\[
\lim_{c\rightarrow +\infty}\int_{|\xi|\geq \kappa_{\rm p}+\delta}M(\xi)\Big(A_{\rm p}(\xi)(\xi,\beta)^\top\Big) \cdot\overline{\Big(\beta A_{\rm p}(\xi,\beta)^\top\Big)}e^{-2(c-b)\sqrt{|\xi|^2 - \kappa_{\rm p}^2}}{\rm d}\xi = 0,
\]
since it is an exponentially decaying function as $c \rightarrow +\infty$. Hence, the first term on the right hand side of (\ref{eq:6}) tends to zero as $c\rightarrow\infty$.
The vanishing of the remaining terms on the right hand side of (\ref{eq:6}) and those of (\ref{eq:7}) can be proved similarly.
This proves the vanishing of (\ref{rhs}), due to the relation (\ref{eq:7}) and the arbitrariness of $c>b$.
\end{proof}

\section{Existence}\label{existence}
\setcounter{equation}{0}

In this section, we discuss the existence of solutions to the scattering problems where the flat surfaces are locally perturbed.

\subsection{Scattering from flat surfaces}

The propagation and reflection of elastic waves in a homogeneous half-space have been of significant interest in the classical seismology. The analytical solutions of this problem are frequently used in the literature for various purposes. In this section, we assume that $\Gamma = \Gamma_0$ (i.e., $b=0$) is a rigid flat surface. In this case, the total field $u = u^{\rm in} + u^{\rm re}$ consists of two parts: the incident field $u^{\rm in}$ and the reflected field $u^{\rm re}$ which solves the boundary value problem
\[
\mu\Delta{u}^{\rm re}+(\lambda+\mu)\nabla
\nabla\cdot{u}^{\rm re} + \omega^2{u}^{\rm re}=0\quad\text{in}\quad U_0,\quad
{u}^{\rm re} = -{u}^{\rm in}\quad\text{on}\quad \Gamma_0.
\]

If $u^{\rm in}$ is a compressional plane wave of the form \eqref{icw}, we have
\begin{align}\nonumber
u^{\rm re}=u^{\rm re}_{\rm p} &= -
\frac{ (\alpha,\gamma)^\top\cdot d}{\beta\gamma+|\alpha|^2}(\alpha, \beta)^\top e^{{\rm i} (\alpha\cdot x^{\prime}+ \beta x_3)} \notag\\ \label{re-p}
&\quad - \frac{1}{\beta\gamma+|\alpha|^2} \Big[(\alpha, \gamma)^\top\times \Big(d\times(\alpha, \beta)^\top\Big)\Big] e^{{\rm i} (\alpha\cdot x^{\prime}+\gamma x_3)},
\end{align}
where
\begin{align*}
\alpha = \kappa_{\rm p}(\sin\theta\cos\varphi, \sin\theta\sin\varphi),\quad
\beta = \sqrt{\kappa_{\rm p}^2 - |\alpha|^2}, \quad \gamma = \sqrt{\kappa_{\rm s}^2 - |\alpha|^2}.
\end{align*}
For the shear incident plane wave \eqref{isp} with $d\cdot d^\bot_j=0$ ($j=1,2$), it holds that
\begin{align}\nonumber
u^{\rm re}=u^{\rm re}_{{\rm s},j} &=
-\frac{ (\alpha,\gamma)^\top\cdot d_j^\bot}{\beta\gamma+|\alpha|^2}(\alpha, \beta)^\top e^{{\rm i} (\alpha\cdot x^{\prime}+ \beta x_3)} \\ \label{re-s}
&\quad - \frac{1}{\beta\gamma+|\alpha|^2} \Big[(\alpha, \gamma)^\top\times \Big(d_j^\bot\times(\alpha, \beta)^\top\Big)\Big] e^{{\rm i} (\alpha\cdot x^{\prime}+\gamma x_3)},
\end{align}
where
\begin{align*}
\alpha = \kappa_{\rm s}(\sin\theta\cos\varphi, \sin\theta\sin\varphi),\quad
\beta = \sqrt{\kappa_{\rm p}^2 - |\alpha|^2}, \quad \gamma = \sqrt{\kappa_{\rm s}^2 - |\alpha|^2}.
\end{align*}
Thus, if $u^{\rm in}$ takes the general form \eqref{inc}, by linear superposition, the reflected wave is given by
\begin{align}\label{reflect}
{u}^{\rm re}(x) = c_{\rm p}{u}^{\rm re}_{\rm p}(x) + c_{{\rm s}, 1}{u}^{\rm re}_{{\rm s}, 1}(x)
 +  c_{{\rm s}, 2}{u}^{\rm re}_{{\rm s}, 2}(x).
\end{align}

The expressions of \eqref{re-p} and \eqref{re-s} follow directly from the UPRC \eqref{uprc} with $\hat{u}^{\rm re}(\xi,0)=-\hat{u}^{\rm in}(\xi,0)$. They can be also obtained from the upward Rayleigh expansion \eqref{Ray} with $u_n^{\rm sc}(b)=-u_n^{\rm in}(b)$ for $n=(0,0)$ and $u_n^{\rm sc}(b)=0$ for $|n|\neq 0$.
These analytical solutions in a half-space indicate that, in general case, a compressional (resp. shear) plane wave reflects back to the domain as a sum of both compressional and shear waves.

Below we derive the reflected wave corresponding to the point source incidence
\eqref{gtn} with the source position $y\in \R^3_+$. In this case, the total field $u =u^{\rm in}+u^{\rm re}$ coincides with the Green's tensor $\text{G}_{\rm H}( x,  y)$ to the first boundary boundary value problem of the Navier in a half space, i.e.,, $\text{G}_{\rm H}( x,  y)$ satisfies
\begin{align*}
 \mu\Delta_y\text{G}_{\rm H}( x,  y)+(\lambda+\mu)\nabla_y
\nabla_y\cdot\text{G}_{\rm H}( x,  y) + \omega^2\text{G}_{\rm H}( x,  y)&=-\delta( x -  y) \text{I} \quad &&\text{in} \quad U_0,\quad  x\neq  y,\\
\text{G}_{\rm H}( x,  y) &= 0  \quad &&\text{on} \quad \Gamma_0.
\end{align*}
Before stating the expression of $\text{G}_{\rm H}( x,  y)$, we introduce the outgoing Kupradze radiation condition for the scattered field $u^{\rm sc}$ in a half space.
\begin{defi}\label{def:radiation}
An upward radiating solution to the Navier equation \eqref{tf} with $D=U_0$ is said to satisfy the half-space Kupradze radiation condition if its compressional part $\varphi$ and shear part $\psi$ satisfy the Sommerfeld
radiation condition as follows:
\begin{align}\label{rc}\begin{split}
\varphi(x)=O(r^{-1}),\quad \partial_r\varphi -{\rm
i}\kappa_{\rm p}\varphi=o(r^{-1}),\\
\psi(x)=O(r^{-1}),\quad \partial_r\psi -{\rm
i}\kappa_{\rm s}\psi=o(r^{-1}),
\end{split}\end{align}
uniformly in all $x \in \{| x|>R\}\cap U_0$ as $r:=|x|\rightarrow\infty$.
\end{defi}
In the following lemma, $G$ is the free-space Green tensor given by $(\ref{gtn})$ and $\tilde{x} = (x^{\prime}, -x_3)$ for $x=(x', x_3)\in \R^3$.

\begin{lemm}\label{green}
(i) The half-space Green tensor $\text{G}_{\rm H}(\cdot,  y)$ ($y_3>0$) can be expressed as
\begin{align}\label{GH}
\text{G}_{\rm H}( x,  y) &= \text{G} ( x,  y) - \text{G}( \tilde{x},  y) + \text{U} ( x, y),\quad x_3>0,\quad x\neq y.
\end{align}
where $U(x,y)$ is given by
\begin{align*}
 \text{U}(x, y)&=\frac{\rm i}{2\pi\omega^2}\int_{\mathbb R^2}
\frac{1}{\beta\,\gamma+| \xi|^2}  \Big(\widetilde{M}_{\rm p}(\xi) e^{{\rm i} \xi\cdot (y^{\prime}-x^{\prime})}e^{{\rm i}\beta y_3} (e^{{\rm i}\beta x_3} - e^{{\rm i}\gamma x_3})\notag\\
&\qquad \qquad\qquad\qquad\qquad+ \widetilde{M}_{\rm s}(\xi) e^{{\rm i} \xi\cdot (y^{\prime}-x^{\prime})}e^{{\rm i}\gamma y_3} (e^{{\rm i}\beta x_3} - e^{{\rm i}\gamma x_3})
  \Big) {\rm d} \xi
\end{align*}
with
\begin{align*}
\widetilde{M}_{\rm p}(\xi)= \begin{bmatrix}
\gamma \xi_1^2       &  \gamma \xi_1\xi_2   & \xi_1 |\xi|^2\\
\gamma \xi_1\xi_2    & \gamma\xi_2^2        & \xi_2 |\xi|^2\\
\beta\gamma \xi_1 & \beta\gamma\xi_2  & \beta |\xi|^2
\end{bmatrix},\quad
\widetilde{M}_{\rm s}(\xi)=
\begin{bmatrix}
-\gamma\xi_1^2 & -\gamma \xi_2^2               & \beta\gamma\xi_1\\
-\gamma\xi_1 \xi_2             & -\gamma\xi_2^2  & \beta\gamma \xi_2\\
\xi_1|\xi|^2           & \xi_2|\xi|^2            & -\beta|\xi|^2
\end{bmatrix}.
\end{align*}
(ii) The columns of the matrix function $\text{G}_{\rm H}(x, \cdot)$ and the rows of the matrix function $\text{G}_{\rm H}( \cdot, y)$ satisfy the half-space Kupradze radiation condition.
\end{lemm}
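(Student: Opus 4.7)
My plan is to prove (i) by constructing $\text{G}_{\rm H}(\cdot, y)$ explicitly through the UASR \eqref{uprc} and matching the resulting integral against the three terms in the claim, and then to deduce (ii) from that explicit form by examining each piece separately.

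For (i), write $\text{G}_{\rm H}(x, y) = \text{G}(x, y) + u^{\rm sc}(x, y)$. The source at $y \in U_0$ cancels, so $u^{\rm sc}$ is an upward radiating solution of the homogeneous Navier equation in $U_0$ with Dirichlet data $u^{\rm sc}(x', 0) = -\text{G}(x', 0; y)$. The UASR \eqref{uprc} (applied on the level $\Gamma_b$ for $b \in (0, y_3)$ and then letting $b \downarrow 0$) reduces the task to computing $\hat{u}^{\rm sc}(\xi, 0) = -\hat{\text{G}}(\xi, 0; y)$. For $0 < x_3 < y_3$ the scalar fundamental solutions admit the Weyl plane-wave representations
\[
\hat{g}_{\rm p}(\xi, 0; y) = \frac{{\rm i}}{4\pi\beta(\xi)}\,e^{-{\rm i}\xi\cdot y'}e^{{\rm i}\beta y_3}, \qquad \hat{g}_{\rm s}(\xi, 0; y) = \frac{{\rm i}}{4\pi\gamma(\xi)}\,e^{-{\rm i}\xi\cdot y'}e^{{\rm i}\gamma y_3},
\]
and applying $\nabla_y \nabla_y^\top$ produces rank-one matrix factors with row vectors $(\xi_1, \xi_2, -\beta)$ and $(\xi_1, \xi_2, -\gamma)$. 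Inserting this into \eqref{uprc} yields an integrand that organizes into the four exponential pairings obtained from $(e^{{\rm i}\beta x_3}, e^{{\rm i}\gamma x_3})$ (the UASR kernel) and $(e^{{\rm i}\beta y_3}, e^{{\rm i}\gamma y_3})$ (the Weyl data). The two diagonal pairings $e^{{\rm i}\beta(x_3+y_3)}$, $e^{{\rm i}\gamma(x_3+y_3)}$ reassemble, via the same Weyl identity — now with $|\tilde{x}_3 - y_3| = x_3 + y_3$ because $\tilde{x}_3 = -x_3 < 0 < y_3$ — into the image piece $-\text{G}(\tilde{x}, y)$. The two off-diagonal pairings $e^{{\rm i}\beta x_3 + {\rm i}\gamma y_3}$ and $e^{{\rm i}\gamma x_3 + {\rm i}\beta y_3}$ have no free-space counterpart; after reducing their matrix kernels with the dispersion relations $|\xi|^2 + \beta^2 = \kappa_{\rm p}^2$ and $|\xi|^2 + \gamma^2 = \kappa_{\rm s}^2$, they collapse to exactly $\widetilde{M}_{\rm p}(\xi)$ and $\widetilde{M}_{\rm s}(\xi)$, producing $\text{U}(x, y)$. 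Uniqueness of the outgoing extension (Theorem \ref{uniqueness} applied to the flat surface) then identifies $\text{G}_{\rm H}$ with the stated formula.

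For (ii), with the decomposition of (i) in hand, the Sommerfeld condition can be checked piecewise for each row. For the free-space tensor $\text{G}(\cdot, y)$, the Helmholtz decomposition \eqref{hd} column by column gives a compressional scalar potential built from $g_{\rm p}$ and its derivatives, and a shear vector potential built from $g_{\rm s}$ and its derivatives, both of which satisfy \eqref{rc} by the standard asymptotics of the Helmholtz fundamental solutions. The image piece $-\text{G}(\tilde{x}, y) = -\text{G}(x, \tilde{y})$ (using the $|x - y|$-symmetry of $g_{\rm p, s}$) is again a free-space elastic wave with source at the reflected point $\tilde{y} \in \R^3 \setminus U_0$, so the same argument applies. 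For $\text{U}(x, y)$, the $e^{{\rm i}\beta x_3}$-parts of the integrand form an outgoing Helmholtz solution with wavenumber $\kappa_{\rm p}$, while the $e^{{\rm i}\gamma x_3}$-parts form an outgoing Helmholtz solution with wavenumber $\kappa_{\rm s}$; on the propagating discs $|\xi| < \kappa_{\rm p, s}$ a standard stationary-phase analysis as $r = |x| \to \infty$ yields the $1/r$ decay and outgoing phase, while on the evanescent complements the exponential decay in $x_3$ renders the contribution negligible. The claim for the columns of $\text{G}_{\rm H}(x, \cdot)$ in the $y$-variable follows by Green's reciprocity.

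The main obstacle is the matrix algebra in the final step of (i): the UASR kernel $(\beta\gamma + |\xi|^2)^{-1}(M_{\rm p}(\xi), M_{\rm s}(\xi))$ acts on the rank-one boundary data, and the resulting $3 \times 3$ matrices must be shown to split exactly into the Weyl integrand of $-\text{G}(\tilde{x}, y)$ plus the correction kernels $\widetilde{M}_{\rm p}$, $\widetilde{M}_{\rm s}$. This is a deterministic but bookkeeping-heavy reduction in which the dispersion relations, the orthogonality identities used in the derivation of $M_{\rm p}, M_{\rm s}$, and the rank-one structure of the Weyl data are all essential to collapse the many intermediate terms into the compact final form.
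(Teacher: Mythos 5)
Your plan for part (i) has a genuine flaw in the way you propose to split the four wavenumber pairings. You claim that the two diagonal pairings $e^{{\rm i}\beta(x_3+y_3)}$, $e^{{\rm i}\gamma(x_3+y_3)}$ of the UASR output reassemble into the image piece $-\text{G}(\tilde{x},y)$, while the two off-diagonal pairings collapse to $\widetilde M_{\rm p}$, $\widetilde M_{\rm s}$ and hence to $\text{U}(x,y)$. But this split is incompatible with the very formula you are trying to prove: the stated $\text{U}(x,y)$ carries the common factor $(e^{{\rm i}\beta x_3}-e^{{\rm i}\gamma x_3})$, and hence contains \emph{both} diagonal terms (e.g.\ $\widetilde M_{\rm p}e^{{\rm i}\beta y_3}e^{{\rm i}\beta x_3}$) and off-diagonal ones (e.g.\ $-\widetilde M_{\rm p}e^{{\rm i}\beta y_3}e^{{\rm i}\gamma x_3}$). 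Equivalently, in the expansion of $\hat u^{\rm sc}=\bigl(M^+_{\rm p}e^{{\rm i}\beta x_3}+M^+_{\rm s}e^{{\rm i}\gamma x_3}\bigr)\bigl(-\hat{\text{G}}(\xi,0;y)\bigr)$, the diagonal $e^{{\rm i}\gamma(x_3+y_3)}$ term is $M^+_{\rm s}$ applied to the shear part of the data, not $\text{I}$ applied to it; since $M^+_{\rm s}\neq \text{I}$, this does not equal the shear part of $-\hat{\text{G}}(\tilde x,y)$. The discrepancy, which equals $M^+_{\rm p}$ applied to the shear data (and its compressional counterpart), is precisely the diagonal contribution of $\text{U}$. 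Carrying out your plan literally would therefore produce an incorrect ``$\text{U}$'' with the wrong matrix kernels.

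The paper avoids this pitfall by a different decomposition: it subtracts the image tensor from the boundary data \emph{before} propagating. On $\{y_3=0\}$, the $\tfrac{1}{\mu}g_{\rm s}\text{I}$ parts of $\text{G}(x,\cdot)$ and $\text{G}(\tilde x,\cdot)$ are equal (since $|x-(y',0)|=|\tilde x-(y',0)|$) and cancel; only the $\nabla_y\nabla_y^\top$ differences survive, and among those only the entries with an odd number of $\partial_{y_3}$'s (i.e.\ the $(1,3),(2,3),(3,1),(3,2)$ entries) are nonzero. This directly yields the sparse matrix $V(\xi)$ and the boundary data $\hat{\text{U}}(x,\xi)\propto(e^{{\rm i}\beta x_3}-e^{{\rm i}\gamma x_3})V(\xi)$, which is then propagated by a single application of the UASR. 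That one observation replaces the heavy reorganisation of the four pairings your route would require, and is essentially the whole content of part (i).

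Two smaller points. First, your identity $\text{G}(\tilde x,y)=\text{G}(x,\tilde y)$ used in part (ii) is not correct as stated: the scalar kernels $g_{\rm p},g_{\rm s}$ are indeed symmetric under $(x,y)\mapsto(\tilde x,\tilde y)$, but the matrix $\nabla\nabla^\top(g_{\rm s}-g_{\rm p})$ acquires a conjugation by $R=\mathrm{diag}(1,1,-1)$, so the correct relation is $\text{G}(\tilde x,y)=R\,\text{G}(x,\tilde y)\,R$. This does not harm the conclusion that the image piece satisfies the Kupradze condition, but the identity should be stated correctly. Second, for the $\text{U}$-piece in (ii), the stationary-phase argument you sketch handles the far-field direction away from the boundary, but the uniformity of the $o(r^{-1})$ estimate as directions approach tangential is exactly where the evanescent spectrum is delicate; the paper supplies this by deriving a pointwise $O((1+|y'|)^{-2})$ decay of the boundary trace $\text{U}_\alpha(x,\cdot)|_{y_3=0}$ and appealing to the established rough-surface results of Chandler-Wilde--Ross--Zhang and Hu--Rathsfeld, which your sketch does not replace.
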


We remark that the first two terms on the right hand side of \eqref{GH}, i.e., $\text{G}(x,y)-\text{G}(\tilde{x},y)$ does not satisfy the Navier equation in $x_3>0$, although it vanishes on $x_3=0$. We refer to \cite{a-siaa01} for the expression of $U$ in two dimensions.

\begin{proof}
Since $\text{G}_{\rm H}(\cdot,  \cdot)$ is symmetric, we fix $x_3>0$ and take $y$ as the variable in our proof.

(i) Taking the Fourier transform of $g_{\rm p}(x, y)$ and $g_{\rm s}( x, y)$ (see \eqref{gn}) with respect to the variable $y^{\prime}\in \R^2$ gives
\[
\hat{g}_{\rm p}( x, (\xi, y_3)) = \frac{\rm i}{2\beta} e^{{\rm i} \beta |x_3-y_3|} e^{-{\rm i}\xi_1 x_1} e^{-{\rm i}\xi_2 x_2},\quad
\hat{g}_{\rm s}(x, (\xi, y_3)) = \frac{\rm i}{2\gamma} e^{{\rm i} \gamma |x_3-y_3|} e^{-{\rm i}\xi_1 x_1} e^{-{\rm i}\xi_2 x_2}.
\]
The Dirichlet boundary condition on $y_3 = 0$ gives the relation
\be\nonumber
\text{U}( x, y) &=&-\text{G}(x,y)+\text{G}(\tilde{x},y)\\ \nonumber
 &=&- \frac{1}{\omega^2}\nabla_{ y}\nabla^\top_{ y}(g_{\rm s}( x, y)-g_{\rm p}( x,
y)) + \frac{1}{\omega^2}\nabla_{ y}\nabla^\top_{ y}(g_{\rm s}( \tilde{x}, y)-g_{\rm p}( \tilde{x},
y))\\ \label{U}
&=& \frac{1}{\omega^2}\nabla_{y}\nabla^\top_{ y}(g_{\rm s}( \tilde{x}, y)-g_{\rm s}( x,
y)) - \frac{1}{\omega^2}\nabla_{ y}\nabla^\top_{ y}(g_{\rm p}( \tilde{x}, y)-g_{\rm p}(x,
y)).
\en
Therefore, the Fourier transform of $\text{U}( x, y)$ on $y_3=0$, which we denote by $\hat{\text{U}}(x, \xi):=(\hat{\text{U}}( x, (\xi, 0))_{ij}$, takes the form
\begin{align*}
\hat{\text{U}}(x, \xi)=\frac{\rm i}{\omega^2}e^{-{\rm i}\xi_1 x_1} e^{-{\rm i}\xi_2 x_2} \Big(e^{{\rm i} \beta x_3} - e^{{\rm i} \gamma x_3}\Big) V(\xi),\quad V(\xi):=
\begin{bmatrix}
0 & 0 & \xi_1 \\
0 & 0 & \xi_2 \\
\xi_1 & \xi_2 & 0
\end{bmatrix}.
\end{align*}
Consequently, we have from the UASR \eqref{uprc} that
\begin{align*}
 \text{U}&=\frac{\rm i}{2\pi\omega^2}\int_{\mathbb R^2}\Big\{
\frac{1}{\beta\,\gamma+| \xi|^2}  \Big(M_{\rm p}(\xi) e^{{\rm i} \xi\cdot (y^{\prime}-x^{\prime})}e^{{\rm i}\beta y_3} (e^{{\rm i}\beta x_3} - e^{{\rm i}\gamma x_3})\notag\\
&\quad + M_{\rm s}(\xi) e^{{\rm i} \xi\cdot (y^{\prime}-x^{\prime})}e^{{\rm i}\gamma(y_3-b)} (e^{{\rm i}\beta x_3} - e^{{\rm i}\gamma x_3})
  \Big\}\, V(\xi) {\rm d} \xi\notag\\
 &=\frac{\rm i}{2\pi\omega^2}\int_{\mathbb R^2}\Big\{
\frac{1}{\beta\,\gamma+| \xi|^2}  \Big(\widetilde{M}_{\rm p}(\xi) e^{{\rm i} \xi\cdot (y^{\prime}-x^{\prime})}e^{{\rm i}\beta y_3} (e^{{\rm i}\beta x_3} - e^{{\rm i}\gamma x_3})\notag\\
&\quad + \widetilde{M}_{\rm s}(\xi) e^{{\rm i} \xi\cdot (y^{\prime}-x^{\prime})}e^{{\rm i}\gamma y_3} (e^{{\rm i}\beta x_3} - e^{{\rm i}\gamma x_3})
  \Big\} {\rm d} \xi, \quad \quad y_3>0,
\end{align*}
where $M_{\rm p}$ and $M_{\rm s}$ are given respectively in \eqref{Mp} and \eqref{Ms}, and
\begin{align*}
\widetilde{M}_{\rm p}(\xi)&=M_{\rm p}(\xi)\,V(\xi) =\begin{bmatrix}
\gamma \xi_1^2       &  \gamma \xi_1\xi_2   & \xi_1 |\xi|^2\\
\gamma \xi_1\xi_2    & \gamma\xi_2^2        & \xi_2 |\xi|^2\\
\beta\gamma \xi_1 & \beta\gamma\xi_2  & \beta |\xi|^2
\end{bmatrix},\\
\widetilde{M}_{\rm s}(\xi)&=M_{\rm s}(\xi)\,V(\xi)=
\begin{bmatrix}
-\gamma\xi_1^2 & -\gamma \xi_2^2               & \beta\gamma\xi_1\\
-\gamma\xi_1 \xi_2             & -\gamma\xi_2^2  & \beta\gamma \xi_2\\
\xi_1|\xi|^2           & \xi_2|\xi|^2            & -\beta|\xi|^2
\end{bmatrix}.
\end{align*}

(ii) To prove the half-space Kupradze radiation condition of $\text{G}_{\rm H}$, we adopt the two-dimensional arguments of Arens \cite[Theorem 4.5]{Arens00}. Let
\ben
\text{U}_{\rm p}(x,y)=
\frac{\rm i}{2\pi\omega^2}\int_{\mathbb R^2}
\frac{1}{\beta\,\gamma+| \xi|^2}  \Big(\widetilde{M}_{\rm p}(\xi) e^{{\rm i} \xi\cdot (y^{\prime}-x^{\prime})}e^{{\rm i}\beta y_3} (e^{{\rm i}\beta x_3} - e^{{\rm i}\gamma x_3})
  \Big) {\rm d} \xi,\\
\text{U}_{\rm s}(x,y)=
\frac{\rm i}{2\pi\omega^2}\int_{\mathbb R^2}
\frac{1}{\beta\,\gamma+| \xi|^2}  \Big(\widetilde{M}_{\rm s}(\xi) e^{{\rm i} \xi\cdot (y^{\prime}-x^{\prime})}e^{{\rm i}\beta y_3} (e^{{\rm i}\gamma x_3} - e^{{\rm i}\gamma x_3})
  \Big) {\rm d} \xi.
\enn
It suffices to verify that $\text{U}_\alpha$ ($\alpha=\rm p, \rm s$) fulfills the Sommerfeld radiation condition specified in Definition \ref{def:radiation}. Note that $(\Delta_y+k_\alpha^2) \text{U}_\alpha(x,y)=0$ for $\alpha=\rm p,\rm s$ and all $y\in\R^3_+\backslash\{x\}$.
Since $\text{U}=\text{U}_{\rm p}+\text{U}_{\rm s}$, it follows from \eqref{U} that
\ben
\text{U}_{\rm p}(x,y)=\frac{1}{\omega^2}\nabla_{y}\nabla^\top_{ y}(g_{\rm s}( \tilde{x}, y)-g_{\rm s}( x,
y)) - \frac{1}{\omega^2}\nabla_{ y}\nabla^\top_{ y}(g_{\rm p}( \tilde{x}, y)-g_{\rm p}(x,
y))-\text{U}_{\rm s}(x,y),\quad y_3=0.
\enn
Direct calculations show that $|g_{\rm \alpha}( \tilde{x}, y)-g_{\rm \alpha}( x,
y)|\leq C (1+x_3)(1+y_3) |x-y|^{-2}$ for all $x\neq y$ with $x,y\neq 0$ and $x_3,y_3\geq 0$ and all $\alpha={\rm p,\rm s}$.
 Hence, it follows from the interior estimate that
\be\label{decay}
w(x,y'):=\text{U}_{\rm p}(x,y)|_{y_3=0}\leq C\,(1+|y'|)^{-2}\quad\mbox{for some fixed}\quad x\in \R_+^3.
\en

Reviewing the UPRC and ASR for the Helmholtz equation, we obtain for $y_3>0$ that
\ben
\text{U}_{\rm p}(x,y)=2\int_{\Gamma_0} \frac{\partial g_{\rm p}(y,z)}{\partial z_3}w(x,z'){\rm d}s(z')
=\frac{1}{2\pi}\int_{\R^2} e^{{\rm i}\beta(\xi)\,y_3+{\rm i}\xi\cdot y' } \hat{w}(x,\xi)\,{\rm d}\xi.
\enn
We can then use the argument presented in \cite[Section 5]{CRZ} and \cite[Lemma 2.2 and Corollary 4.1]{HR2018} to conclude that the decay rate of \eqref{decay}  ensures the Sommerfeld radiating behavior of $\text{U}_{\rm p}$ as $|y|\rightarrow\infty$ in $y_3>0$. The Sommerfeld radiation condition of $\text{U}_{\rm s}$ can be proceeded analogously. We note that the arguments of \cite{CRZ,HR2018} present the decaying behavior of the scattered field for the two-dimensional rough surface scattering problems due to a compact source term or a point source incidence and can be readily carried over to the three-dimensional case.
\end{proof}

\subsection{Scattering from locally perturbed flat surfaces}

In this section we consider the existence of weak solutions for the scattering problem \eqref{tf} and \eqref{uprc}, where $\Gamma$ is a locally perturbed flat surface. Without loss of generality, we assume that $\Gamma$ coincides with the ground plane $\Gamma_0:=\{x_3=0\}$ in $|x|>R$ for some $R>\max_{x\in \Gamma}\{x_3\}$. Hence, the domain $D$ above $\Gamma$ is a locally perturbed half space. In this case, as can be seen from the subsequent subsections, we can propose an equivalent variational formulation in a bounded domain by truncating the unbounded domain $D$ with a transparent boundary condition and then applying the Fredholm alternative. The reduction to a bounded domain has significantly simplified the arguments for globally perturbed scattering problems, because the compact embedding of $H^1$ into $L^2$ is in general not valid in an unbounded domain.

Specifically, we consider to cases:
\begin{itemize}
\item[(i)] The perturbation lies entirely below the ground plane, i.e., $\Gamma\cap\{x_3>0\}=\emptyset$.
\item[(ii)] The perturbation is allowed to occur in the upper half space, i.e., $\Gamma\cap\{x_3>0\}\neq \emptyset$.
\end{itemize}
Note that in the literature, Case (i) is referred to as an open cavity scattering problem in acoustics and electromagnetism, whereas Case (ii) is known as an overfilled cavity scattering problem. The above two cases will be investigated in the following two subsections separately. In particular, the existence result of Theorem \ref{exist} has improved the well-posedness of acoustic cavity scattering problems \cite{lw-jcp13}, while Theorem \ref{th5.12} has generalized the two-dimensional result \cite{eh-siaa12} to three dimensions. Some open questions will be discussed in Remark \ref{Rem:5.4}.

\subsubsection{Case (i): perturbation beneath the ground plane}

For simplicity, we assume that $\Omega=\Omega\cap\{x_3<0\}$ is connected. The problem geometry is shown in Figure \ref{fig3}. If $\Omega$ is disconnected, one can apply our variational argument to each connected set of $\Omega$. Let $\Lambda_0$ be the aperture of $\Omega$ and $S$ be the boundary of $\Omega$ in the lower half space. We have $\partial \Omega=\Lambda_0\cap S$ and $D = \Omega \cup U_0 \cup \Lambda_0$. Let $\Gamma_0^c=\Gamma_0\backslash \Lambda_0$ and $\Gamma = S\cup \Gamma_0^c$. We assume that the scattering surface $\Gamma$ (especially the boundary $S$) is a Lipschitz continuous surface but not necessary the graph of some function.

\begin{figure}
\centering
\includegraphics[width=15cm,viewport=0.pt 180.pt 800.pt
  440.pt,clip]{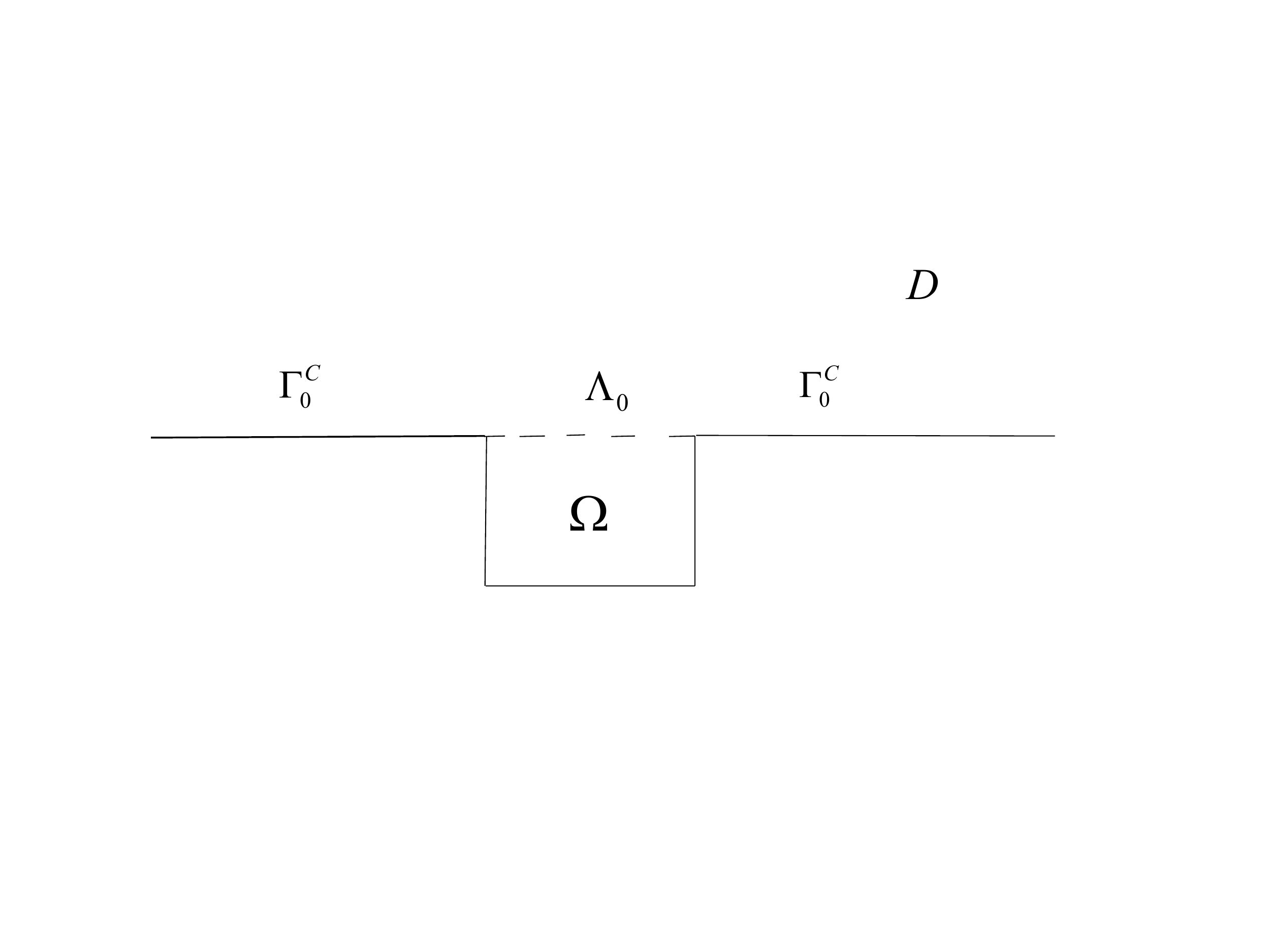}
\caption{The problem geometry of a local perturbation of the ground plane which lies entirely in the lower half space.}
\label{fig3}
\end{figure}

Introduce the functional space
\[
\tilde{H}^{1/2}(\Lambda_0)^3 = \{v: \mbox{the zero-extension of $v$ from $\Lambda_0$ to $\Gamma_0$ belongs to $H^{1/2}(\mathbb R^2)^3$}\}.
\]
Denote by $H^{-1/2}(\Lambda_0)^3$ the dual space of $\tilde{H}^{1/2}(\Lambda_0)^3$. We propose a variational formulation over the Hilbert space
\[
H_S^1(\Omega)^3 = \{ u \in H^1(\Omega)^3:  u = 0 ~\text{on} ~S,\, u|_{\Lambda_0} \in \tilde{H}^{1/2}(\Lambda_0)^3\}.
\]

Consider a downward propagating pressure wave of the form
\begin{align}\label{2.15}
 u_{\rm pg}^{\rm in}(x) = \int_{\mathbb R^2}\frac{1}{\beta\,\gamma+| \xi|^2} M^{(D)}_{\rm p}(\xi)( \xi, -\beta)^{\top}g( \xi) e^{{\rm i}( \xi \cdot x^{\prime}-\beta (x_3-b))} {\rm d} \xi,\quad x\in S_b,
\end{align}
where $b>0$ and $g$ belongs to space of distributions $\mathcal{D}^{\prime}(\mathbb R^2)$ such that $\text{supp}(g)\subset\{|\xi|<\kappa_{\rm p}\}$. Alternatively, we may consider an incident shear wave of the form
\begin{align}\label{2.16}
 u_{\rm sg}^{\rm in}(x) = \int_{\mathbb R^2}\frac{1}{\beta\,\gamma+| \xi|^2} M^{(D)}_{\rm s}(\xi)(( \xi, -\gamma) \times  {\boldsymbol q}( \xi))^{\top} e^{{\rm i}( \xi \cdot x^{\prime}-\gamma (x_3-b))} {\rm d} \xi,\quad x\in S_b,
\end{align}
where $\boldsymbol q\in \mathcal{D}^{\prime}(\mathbb R^2)^3$ is a vector distribution such that $\text{supp}(\boldsymbol q)\subset\{| \xi|<\kappa_{\rm s}\}$.
Here, the matrices $M^{(D)}_{\rm p}$ and $M^{(D)}_{\rm s}$ are defined in \eqref{MpsD}. By direct calculations it is easy to verify that both $u_{\rm pg}^{\rm in}(x)$ and $u_{\rm sg}^{\rm in}(x)$ satisfy the Navier equation \eqref{ifne}.

\begin{rema} We remark that the set of incident compressional (resp. shear) waves \eqref{2.15} (resp. \eqref{2.16}) includes the compressional (resp. shear) plane wave \eqref{icw} (resp. \eqref{isp}).
In fact, since the plane waves
 can be rewritten as
 \ben
 u_{\rm p}^{\rm in} = \frac{1}{{\rm i}\kappa_{\rm p}}\nabla e^{{\rm i}\kappa_{\rm p}x\cdot d},\quad
 {u}^{\rm in}_{{\rm s}, j}(x) = \frac{1}{{\rm i}\kappa_{\rm s}}d \times q_j e^{{\rm i}\kappa_{\rm s}x\cdot d} = q_j \nabla\times e^{{\rm i}\kappa_{\rm s}x\cdot d},\quad j=1,2,
 \enn
 where $q_j$ $(j=1,2)$ are unit vectors in $\mathbb R^3$ satisfying $q_1\cdot q_2=0$ and $q_j\cdot d=0$, it follows from the downward ASR \eqref{dprc} that $u_{\rm p}^{\rm in}$ and $u_{\rm s}^{\rm in}$ can be also formulated respectively as the representations \eqref{2.15} and \eqref{2.16} with \ben
 g(\xi) = \frac{1}{2\pi\kappa_{\rm p}}\widehat{e^{{\rm i}\kappa_{\rm p}x\cdot d}}(\xi)|_{\Gamma_b} = \frac{e^{{\rm i}\kappa_{\rm p} x_3b}}{\kappa_{\rm p}}\delta(\xi - \kappa_{\rm p}d^{\prime}),\quad
 \textbf{q}=\textbf{q}_j(\xi) = \frac{e^{{\rm i}\kappa_{\rm s} x_3b}}{\kappa_{\rm s}} q_j \delta(\xi - \kappa_{\rm s}d^{\prime}).
 \enn
\end{rema}

Let $u^{\rm in}$ be an incoming wave of the form
\begin{align}\label{incc}
u^{\rm in}(x) = c_{\rm p} u_{\rm pg}^{\rm in}(x) + c_{\rm s}u_{\rm sg}^{\rm in}(x),\quad c_{\rm p}, c_{\rm s}\in \C.
\end{align}
Multiplying the complex conjugate of a test function $ \phi \in H^1_S(\Omega)^3$ on both sides of the Navier equation
\ben
\mu\Delta{u}+(\lambda+\mu)\nabla
\nabla\cdot{u} + \omega^2{u}=0\quad &\text{in}~ \Omega,
\enn
integrating over $\Omega$ and using the integration by part together with the DtN map \eqref{traction}, we deduce an equivalent variational problem: find $u \in H^1_S(\Omega)^3$ such that
\begin{align}\label{vpc}
 B( u,  \phi)= \int_{\Lambda_0}  p \cdot \bar{ \phi}\, {\rm d}x'  \quad \forall ~ \phi \in H^1_S(\Omega)^3,
\end{align}
where $p:= Tu^{\rm in} - \mathcal{T}u^{\rm in}\in H^{-1/2}(\Lambda_0)^3$ and
\begin{align*}
B( u,  \phi):= \int_{\Omega} \mathcal{E}( u, \bar{ \phi}) - \omega^2  u \cdot  \bar{ \phi}\, {\rm d}  x - \int_{\Lambda_0}  \bar{ \phi} \cdot \mathcal{T} \tilde{{u}}\, {\rm d}x'.
\end{align*}
Note that the symbol $\tilde{f}$ stands for the zero extension of $f$ from $\Lambda_0$ to $\Gamma_0$. In deriving \eqref{vpc}, we have used the following identity on $\Lambda_0$:
\ben
Tu&=&Tu^{\rm sc}+Tu^{\rm in}=T\tilde{u}^{\rm sc}+Tu^{\rm in}\\
&=&\mathcal{T}\tilde{u}^{\rm sc}+Tu^{\rm in}=
\mathcal{T}\tilde{u}-\mathcal{T}\tilde{u}^{\rm in}+Tu^{\rm in}\\
&=&\mathcal{T}\tilde{u}-p.
\enn

Moreover, using \eqref{DtN}, one can derive an explicit form of $p$:
\ben
p(x')=\left\{\begin{array}{lll}
\displaystyle\int_{\mathbb R^2} \frac{\rm i}{\kappa_{\rm p}} \frac{2\omega^2\beta}{| \xi|^2+\beta\gamma}(- \xi, \gamma)^{\top}e^{{\rm i} \xi \cdot x^{\prime}+{\rm i}\beta b}g( \xi){\rm d} \xi, &&\quad\mbox{if}\quad u^{\rm in}=u^{\rm in}_{\rm pg},\\
\displaystyle\int_{\mathbb R^2} \frac{\rm i}{\kappa_{\rm s}} \frac{2\omega^2\gamma}{| \xi|^2+\beta\gamma} {\boldsymbol q}( \xi)^{\top}\times( \xi, -\beta)^{\top}e^{{\rm i} \xi \cdot x^{\prime}+{\rm i}\gamma b}{\rm d} \xi, &&\quad\mbox{if}\quad u^{\rm in}=u^{\rm in}_{\rm sg}.
\end{array}\right.
\enn
By the trace theorem $\| {u}\|_{\tilde{H}^{1/2}(\Lambda_0)^3} \leq C\, \|
{u}\|_{H^1(\Omega)^3}$ for all ${u}\in H^1_{S}(\Omega)^{3}$ and the boundedness of the DtN map $\mathcal{T}$ (see the second assertion in Lemma \ref{dtn}),
 there exists a continuous linear operator $\mathcal B: H_S^{1}(\Omega)^3 \rightarrow H_S^{-1}(\Omega)^3:= (H_S^{1}(\Omega)^3)'$ associated with the sesquilinear form $B$ such that
\begin{align*}
B( u,  \phi) = (\mathcal B  u,  \phi), \quad \forall \phi \in H_S^{1}(\Omega)^3.
\end{align*}
Hence, the variational formulation \eqref{vpc} can be rewritten as
\begin{align}\label{4.9}
\mathcal B  u = \mathcal F,
\end{align}
where $\mathcal F \in H_S^{-1}(\Omega)^3$ is defined by the right-hand side of \eqref{vpc}.

\begin{theo}\label{exist}
For incoming waves of the form \eqref{incc}, there always exists a solution $u \in H_S^{1}(\Omega)^3$ to the variational problem \eqref{vpc}. Moreover, this solution can be extended from $\Omega$ to $D$ as a solution of our scattering problem  \eqref{tf} and \eqref{uprc} in $H^1_{loc}(D)$, which can be split as $u=u^{\rm in}+u^{\rm re}+v^{\rm sc}$ in $D$. Here $u^{\rm re}$ is the reflected wave caused by the rigid ground plane $x_3=0$ and $v^{\rm sc}$ satisfies the half-space Kupradze radiation condition (see Definition \ref{def:radiation}).
\end{theo}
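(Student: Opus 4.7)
The plan is to establish existence via the Fredholm alternative applied to the operator equation \eqref{4.9} after a Gårding-type decomposition, and then to verify the uniqueness of the homogeneous problem by exploiting the half-space Kupradze radiating behavior of the extension via the Green tensor $\text{G}_{\rm H}$. The splitting $u=u^{\rm in}+u^{\rm re}+v^{\rm sc}$ then follows by comparison with the flat-surface reflected field.

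First, I would split the sesquilinear form as $B=B_c+B_K$, where $B_c$ is coercive on $H_S^1(\Omega)^3$ and $B_K$ is associated with a compact operator. The bulk term $\int_\Omega \mathcal{E}(u,\bar u)\,dx$ is coercive on $H_S^1(\Omega)^3$ by Korn's inequality (since $u|_S=0$), while Lemma \ref{dtn}(i) allows me to decompose $\mathcal{T}=\mathcal{T}_1+\mathcal{T}_2$, where the high-frequency part $\mathcal{T}_1$ (symbol supported on $|\xi|>K$ for large $K$) contributes a nonnegative quadratic form via $\Re(-{\rm i}M)>0$, and the low-frequency part $\mathcal{T}_2$ (symbol compactly supported in $\xi$) is a smoothing operator, giving a compact map from $\tilde H^{1/2}(\Lambda_0)^3$ to $H^{-1/2}(\Lambda_0)^3$. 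The mass term $-\omega^2\int_\Omega u\cdot\bar\phi\,dx$ is absorbed into the compact part by Rellich's compactness theorem. Thus $\mathcal{B}$ is Fredholm of index zero, and existence reduces to uniqueness.

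For uniqueness, suppose $u\in H_S^1(\Omega)^3$ solves $B(u,\phi)=0$ for all $\phi$. I extend $u$ to $U_0$ via the UASR \eqref{uprc} applied to the Dirichlet datum $\tilde u|_{\Gamma_0}$ (zero on $\Gamma_0^c$, equal to $u$ on $\Lambda_0$); the natural boundary condition in the variational formulation ensures the traction is continuous across $\Lambda_0$, yielding a global solution $U$ of the Navier equation in $D$ with $U=0$ on $\Gamma$. Taking $\phi=u$ and extracting imaginary parts in $B(u,u)=0$ gives
\[
\Im\int_{\Lambda_0}\bar u\cdot\mathcal{T}\tilde u\,dx'=0,
\]
since $\int_\Omega\mathcal{E}(u,\bar u)-\omega^2|u|^2\,dx$ is real. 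Applying Lemma \ref{lem:3.6}(ii) (formula \eqref{eq:a}) to $U|_{U_0}$ and using $\tilde u=0$ on $\Gamma_0^c$ then forces $A_{\rm p}(\xi)=0$ on $|\xi|<\kappa_{\rm p}$ and $\boldsymbol A_{\rm s}(\xi)=0$ on $|\xi|<\kappa_{\rm s}$, so only evanescent modes survive.

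The critical step, which I anticipate to be the main obstacle, is upgrading this to $U\equiv 0$. Since $u|_{\Lambda_0}$ has compact support in $\Gamma_0$, I would represent the extension in $U_0$ through the half-space Green tensor from Lemma \ref{green} as a boundary-layer-type potential over $\Lambda_0$; Lemma \ref{green}(ii) then transfers the half-space Kupradze radiation condition \eqref{rc} to this extension. Applying Betti's identity in $U_0\cap B_R$, taking imaginary parts, and letting $R\to\infty$, the boundary integral on $\Gamma_0$ reduces to the already vanishing term on $\Lambda_0$, while the hemispherical contribution tends to a nonnegative functional of the far-field patterns of the compressional and shear parts, as in the classical Kupradze argument. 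This forces both far-field patterns to vanish, and Rellich's lemma (applied separately to the scalar Helmholtz potentials $\varphi,\psi$) yields $U\equiv 0$ in $U_0$. In particular $u|_{\Lambda_0}=0$, and since $u|_S=0$ and $u$ solves the Navier equation in $\Omega$, unique continuation produces $u\equiv 0$ in $\Omega$. Existence in $H^1_{\rm loc}(D)$ of a solution of \eqref{tf}, \eqref{uprc} follows from the Fredholm alternative together with this extension procedure. Finally, writing $v^{\rm sc}:=u-u^{\rm in}-u^{\rm re}$ with $u^{\rm re}$ given by \eqref{reflect}, one checks that $v^{\rm sc}|_{U_0}$ has trace $u|_{\Lambda_0}$ on $\Lambda_0$ and zero trace on $\Gamma_0^c$, so the Green-tensor representation again shows that $v^{\rm sc}$ satisfies the half-space Kupradze radiation condition.
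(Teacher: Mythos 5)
The paper's proof of existence deliberately \emph{avoids} proving uniqueness. It establishes that $\mathcal{B}$ is Fredholm of index zero, then applies the Fredholm alternative in its orthogonality form: the right-hand side $\mathcal{F}$ lies in the range of $\mathcal{B}$ because, for every $v$ in the kernel of the \emph{adjoint} $\mathcal{B}^*$, one has $\mathcal{F}(v)=0$. This is checked by an explicit calculation that exploits the special structure of the incident waves \eqref{2.15}, \eqref{2.16}: their Fourier symbols $g$ and $\boldsymbol q$ are supported inside the propagating ranges $\{|\xi|<\kappa_{\rm p}\}$, $\{|\xi|<\kappa_{\rm s}\}$, while the propagating part of (a suitable conjugate-ASR extension of) any adjoint-kernel element vanishes; the two are therefore orthogonal. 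You instead propose to prove injectivity of $\mathcal{B}$ and invoke index zero. These are genuinely different arguments, and the paper's Remark~\ref{Rem:5.4}~(ii) states explicitly that uniqueness is \emph{unknown} in this generality; if your injectivity argument were sound, it would resolve that open problem, which should be a warning sign.

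The concrete gap is in your Rellich step. Having shown that the propagating modes of the UASR extension $U$ vanish, you want to conclude $U\equiv 0$ in $U_0$ by applying Rellich's lemma ``separately to the scalar Helmholtz potentials $\varphi,\psi$.'' But the Dirichlet condition $U=0$ on $\Gamma_0^c$ does \emph{not} decouple into separate boundary conditions for $\varphi$ and $\psi$: it only says $\nabla\varphi+\nabla\times\psi=0$ there. Consequently neither $\varphi$ nor $\psi$ can be reflected oddly (or evenly) across $\Gamma_0$ to produce an exterior-domain radiating solution to which Rellich's lemma applies. This is exactly the obstruction the paper names — ``the lack of a pointwise reflection principle for the first boundary value problem of the Navier equation'' — as the reason uniqueness remains open and why the orthogonality route was taken instead. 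Your proposal also skips what the paper treats as Step~2 and the technical core of the second assertion of the theorem: proving the identity \eqref{ex-vsc}, i.e.\ that the UASR extension \eqref{ke} of the variational solution coincides with the half-space Green-tensor boundary potential. This requires the explicit Fourier calculations \eqref{ty1}--\eqref{ty3}; it is not an immediate consequence of compact support of the trace, since the two representations are defined by quite different formulas and their coincidence is precisely the content of that step.
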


\begin{proof}

We divide the proof into two steps: the first step is to prove the existence of the variational equation (\ref{4.9}) and the second step is to extend the solution of (\ref{4.9}) from $\Omega$ to $D$.

Step 1. By the Plancherel identity we have
\begin{align*}
&\Re\int_{\Lambda_0} \mathcal{T} \tilde{{u}} \cdot \bar{{u}} {\rm d}x^{\prime} = \Re\int_{\mathbb R^2} \mathcal{T} \tilde{{u}} \cdot \bar{\tilde{{u}}} {\rm d}x^{\prime} = \Re\int_{\mathbb R^2} \widehat{\mathcal{T} \tilde{{u}}} \cdot \bar{\hat{\tilde{{u}}}} {\rm d} \xi\\
&= \int_{| \xi|>K} {\rm i}M( \xi) \hat{\tilde{{u}}} \cdot \bar{\hat{\tilde{{u}}}} {\rm d} \xi + \int_{| \xi|\leq K} {\rm i}M( \xi) \hat{\tilde{{u}}} \cdot \bar{\hat{\tilde{{u}}}} {\rm d}\xi,
\end{align*}
where the matrix $M( \xi)$ defined in \eqref{M} and $K>0$ is sufficiently large such that $M(\xi)$ is positively definite for all $|\xi|>K$ (see Lemma \ref{dtn}).
Hence, the above identity implies that
\begin{align*}
-\Re\int_{\Lambda_0}  \mathcal{T} \tilde{{u}}\cdot \bar{u}{\rm d}x^{\prime} \geq -C \int_{| \xi|\leq K} |\hat{\tilde{u}}(\xi,0)|^2 {\rm d} \xi
\geq -C \int_{\mathbb R^2} |\hat{\tilde{u}}( \xi,0)|^2 {\rm d} \xi
 =  - C\,\| u\|^2_{L^2(\Lambda_0)^3}.
\end{align*}
Using the inequalities
\[
\| u\|^2_{L^2(\Lambda_0)^3} \leq \epsilon\, \| u\|^2_{H^1(\Omega)^3} + C_0(\epsilon)\| u\|^2_{L^2(\Omega)^3},\quad \epsilon>0,
\]
and
\[
\int_{\Omega}\mathcal{E}( u, \bar{ u}){\rm d} x + \int_{\Omega}| u|^2 {\rm d} x \geq C_1(\Omega) \| u\|^2_{H^1(\Omega)^3},
\]
we obtain
\begin{align*}
\Re B(u, u)\geq C_2\| u\|^2_{H^1(\Omega)^3} - C_3\| u\|^2_{L^2(\Omega)^3}.
\end{align*}
 Since the injection of $H_S^1(\Omega)^3$ into $L^2(\Omega)^3$ is compact, the above inequality shows that the sesquilinear form $B$ is strongly elliptic and thus the operator $\mathcal{B}$ is Fredholm with index zero. Hence, the operator equation \eqref{4.9} is solvable if its right-hand side $\mathcal F$ is orthogonal to all solutions $v\in H_S^1(\Omega)^3$ of the homogeneous adjoint equation $\mathcal B^* v =0$. Note that such $v$ satisfies
 \begin{align}\label{4.11}
(\mathcal B^* v,  \phi)_{L^2(\Omega)^3} = ( v, \mathcal B \phi)_{L^2(\Omega)^3} = \overline{B( \phi,  v)} = 0, \quad \forall \phi \in H^1_S(\Omega)^3.
\end{align}
 Furthermore, we can extend $v\in H_S^1(\Omega)^3$ to a solution of the Navier equation \eqref{tf} in the unbounded domain $U_0$ by setting
\begin{align*}
v(x) = \int_{\mathbb R^2} A_{\rm p}( \xi)( \xi, -\bar{\beta}( \xi))^{\top}e^{{\rm i}( \xi \cdot x^{\prime} - \bar{\beta}x_3)} + \boldsymbol A_{\rm s}( \xi) e^{{\rm i}( \xi \cdot x^{\prime} - \bar{\gamma}z)} {\rm d}\xi, \quad x_3>0,
\end{align*}
where $\boldsymbol A_{\rm s}(\xi)\in \mathbb C^{3\times3}$ satisfies the orthogonality relation $\boldsymbol A_{\rm s}( \xi) \cdot ( \xi, - \bar{\gamma}) = 0$ and
\begin{align*}
\hat{v}( \xi, 0)
 =
\begin{bmatrix}
\xi_1 & 1 & 0 & 0\\
\xi_2 & 0 & 1 & 0\\
- \bar{\beta} & 0 & 0 & 1
\end{bmatrix}
\begin{bmatrix}
A_{\rm p}(\xi)\\[5pt]
\boldsymbol A_{\rm s}^{\top}(\xi)
\end{bmatrix},\quad \xi\in \R^2.
\end{align*}
Analogously to Lemma \ref{L3}, it can be derived from (\ref{4.11})
that
\begin{align*}
A_{\rm p}( \xi) = 0 \quad \text{for} ~ |\xi|<\kappa_{\rm p}, \quad \boldsymbol A_{\rm s}( \xi) = 0 \quad \text{for} ~ | \xi|<\kappa_{\rm s}.
\end{align*}
Hence, if the incident wave has the form \eqref{2.15} with $\mbox{supp}(g)\subset\{\xi: |\xi|<\kappa_{\rm p}\}$, then
\begin{align*}
&\mathcal F(v)  = \int_{\mathbb R^2}\hat{p} \bar{\hat{ \tilde{v}}}{\rm d} \xi\\
& = \int_{\mathbb R^2} \Big(\frac{\rm i}{\kappa_{\rm p}} \frac{2\omega^2\beta}{|\xi|^2+\beta\gamma}(-\xi, \gamma)^{\top}g(\xi)\Big) \cdot \Big( \bar{A}_{\rm p}( \xi)( \xi, -\beta)^{\top}
 + \bar{\boldsymbol A}_{\rm s}( \xi) \Big){\rm d} \xi\\
& = \int_{| \xi|<\kappa_{\rm p}} \Big(\frac{\rm i}{\kappa_{\rm p}} \frac{2\omega^2\beta}{| \xi|^2+\beta\gamma}(- \xi, \gamma)^{\top}\Big) \cdot \Big( \bar{A}_{\rm p}( \xi)( \xi, -\beta)^{\top}
 + \bar{\boldsymbol A}_{\rm s}( \xi)\Big){\rm d} \xi\\
&=0.
\end{align*}
Similarly, in the case of \eqref{2.16}, we have
\begin{align*}
\mathcal F(v)
&= \int_{\mathbb R^2} \Big( \frac{\rm i}{\kappa_{\rm s}} \frac{2\omega^2\gamma}{| \xi|^2+\beta\gamma} \textbf{q}( \xi)^{\top}\times(\xi, -\beta)^{\top}\Big)\cdot \Big(\bar{A}_{\rm p}( \xi)(\xi, -\beta)^{\top}
+ \bar{\boldsymbol A}_{\rm s}( \xi) \Big) {\rm d} \xi\\
&=\int_{| \xi|<\kappa_{\rm s}} \Big( \frac{\rm i}{\kappa_{\rm s}} \frac{2\omega^2\gamma}{| \xi|^2+\beta\gamma} \textbf{q}( \xi)^{\top}\times( \xi, -\beta)^{\top}\Big)\cdot \Big(\bar{A}_{\rm p}( \xi)( \xi, -\beta)^{\top}\Big){\rm d}\xi  \\
&= 0.
\end{align*}
Therefore, the right-hand side of \eqref{4.9} is always orthogonal to each solution of \eqref{4.11}. Applying the Fredholm alternative, we obtain the existence of solutions to \eqref{4.9}.

Step 2. Let $v^{\rm sc}:=u-u^{\rm in}-u^{\rm re}$ in $\Omega$. Let $\tilde{v}^{\rm sc}$ be the zero extension of $v^{\rm sc}|_{\Lambda_0}$ onto $\Gamma_0$. Note that the sum of the incident field $u^{\rm in}$ and the reflected field $u^{\rm re}$ vanishes on $\Gamma_0^C$. We extend $v^{\rm sc}$ from $\Omega$ to $D$ by \eqref{uprc} with $b=0$, i.e.,
\begin{align}\label{ke}
 v^{\rm sc}(x)=\frac{1}{2\pi}\int_{\mathbb R^2}\Big\{
\frac{1}{\beta\,\gamma+| \xi|^2}  \Big(M_{\rm p}(\xi) e^{{\rm i} (\xi\cdot x^{\prime}+ \beta\, x_3)} + M_{\rm s}(\xi) e^{{\rm i} (\xi\cdot x^{\prime}+\gamma\, x_3)}  \Big)   \widehat{ \tilde{v}^{\rm sc}}(\xi, 0)\Big\} {\rm d} \xi,\quad x \in U_0.
\end{align}
We claim that the scattered field $v^{\rm sc}$ defined in \eqref{ke} can be represented as
\begin{align}\label{ex-vsc}
v^{\rm sc}(x) = \int_{\Gamma_0}T_y\text{G}_{\rm H}(x,y)v^{\rm sc}(y){\rm d}s(y),\quad x\in U_0,
\end{align}
where $\text{G}_{\rm H}(x,y)$ is the half-space Green's tensor (see \eqref{GH}) and
$T_y\text{G}_{\rm H}(x,y)$ represents the column-wisely action of the stress operator $T$ to $\text{G}_{\rm H}(x,y)$ with respect to the variable $y$. Since the trace of $v^{sc}$ on $\Gamma_0$ is compactly supported in $\Lambda_0$, by Lemma \ref{green}, $v^{\rm sc}$ satisfies the half-space Kupradze radiation condition, which completes the proof of the second part of Theorem \ref{exist}.

It remains to prove \eqref{ex-vsc}.
Since $v^{\rm sc}$ has compact support on $\Gamma_0$, applying the Fourier transform with respect to $y^{\prime}$ gives
\begin{align*}
\int_{\Gamma_0}T_y\text{G}_{\rm H}(x, y)v^{\rm sc}(y){\rm d}s(y) = \int_{\mathbb R^2}\widehat{T_y\text{G}_{\rm H}}(x, (-\xi, 0))\widehat{v^{\rm sc}}(\xi){\rm d}\xi.
\end{align*}
For simplicity of notation, we denote $\widehat{T_y\text{G}_{\rm H}}(x, (-\xi, 0))$ by $\widehat{T_y\text{G}_{\rm H}}(x, -\xi)$, which will be calculated as follows. By \eqref{GH},
\ben
\widehat{T_y\text{G}_{\rm H}}(x, -\xi)=
\widehat{T_y\text{G}}(x, -\xi)+\widehat{T_y\text{G}}(\tilde{x}, -\xi)
+\widehat{\text{U}}(x, -\xi).
\enn

 The Fourier transform of $\text{G}(x, y)$ with respect to the variable $y'$ on $\Gamma_0$ is
\begin{align*}
\hat{\text{G}}(x, \xi, 0) &= \frac{1}{\mu} \hat{g}_{\rm p}(x, \xi, 0)\text{I}\\
& \quad + \frac{(-\rm i)^2}{\omega^2}\hat{g}_{\rm p}(x, \xi, 0)
\begin{bmatrix}
\xi_1^2 & \xi_1\xi_2& \xi_1 \beta\\[5pt]
\xi_1 \xi_2 & \xi_2^2  & \xi_2 \beta\\[5pt]
\xi_{1}\beta           &\xi^2\beta  & \beta^2
\end{bmatrix}
- \frac{(-\rm i)^2}{\omega^2}\hat{g}_{\rm s}(x, \xi, 0)
\begin{bmatrix}
\xi_1^2 & \xi_1\xi_2& \xi_1 \gamma\\[5pt]
\xi_1 \xi_2 & \xi_2^2  & \xi_2 \gamma\\[5pt]
\xi_{1}\gamma           &\xi^2\gamma  & \gamma^2
\end{bmatrix}.
\end{align*}
The expression of $\hat{\text{G}}(\tilde{x}, \xi, 0)$ can be obtained analogously. For $x_3>0$, the functions $\text{G}(x, \cdot)$ and $\text{G}(\tilde{x}, \cdot)$ propagate downward and upward propagating near $\Gamma_0$, respectively. It follows from the downward and upward DtN maps that
\begin{align}\label{ty1}
\widehat{T_y\text{G}}(x, \xi) = {\rm i}M^-(\xi)\hat{\text{G}}(x, \xi, 0),\quad
\widehat{T_y\text{G}}(\tilde{x}, \xi) = {\rm i} M(\xi)\hat{\text{G}}(\tilde{x}, \xi, 0),
\end{align} where the matrices $M$ and $M^-$ are given by (\ref{M}) and \eqref{Mn}, respectively.
Moreover, we have from \eqref{U} that
\begin{align}\label{ty3}
\widehat{T_y\text{U}}(x, \xi) = \frac{\rm i}{2\pi\omega^2}\frac{e^{-\rm i \xi \cdot x^{\prime}}}{\beta\,\gamma+| \xi|^2}
 \Big(T_{\rm p}(\xi)\widetilde{M}_{\rm p}(\xi) + T_{\rm s}(\xi)\widetilde{M}_{\rm s}(\xi)\Big)(e^{\rm i \beta x_3} - e^{\rm i \gamma x_3}),
\end{align}
where
\begin{align*}
T_{\rm p}(\xi) := \rm i
\begin{bmatrix}
\mu \beta & 0 & \mu \xi_1\\[5pt]
0 & \mu \beta  &\mu \xi_2\\[5pt]
\lambda \xi_1          &\lambda \xi_2  & (\lambda + 2\mu)\beta
\end{bmatrix},\quad
T_{\rm s}(\xi) := \rm i
\begin{bmatrix}
\mu \gamma & 0 & \mu \xi_1\\[5pt]
0 & \mu \gamma  &\mu \xi_2\\[5pt]
\lambda \xi_1          &\lambda \xi_2  & (\lambda + 2\mu)\gamma
\end{bmatrix}.
\end{align*}
Combing \eqref{ty1}--\eqref{ty3}, we obtain after tedious but straightforward calculations that
\begin{align*}
&\widehat{T_y\text{G}}(x, -\xi) + \widehat{T_y\text{G}}(\tilde{x}, -\xi) + \widehat{T_y\text{U}}(x, -\xi)\\
&= {\rm i}\,M^-(-\xi)\hat{\text{G}}(x, -\xi, 0) +  {\rm i}\, M(-\xi)\hat{\text{G}}(\tilde{x}, -\xi, 0)\\
&\quad + \frac{\rm i}{2\pi\omega^2}\frac{e^{\rm i \xi \cdot x^{\prime}}}{\beta\,\gamma+| \xi|^2}
 \Big(T_{\rm p}(-\xi)\widetilde{M}_{\rm p}(-\xi) + T_{\rm s}(-\xi)\widetilde{M}_{\rm s}(-\xi)\Big)(e^{\rm i \beta x_3} - e^{\rm i \gamma x_3})\\
&=
\frac{1}{\beta\,\gamma+| \xi|^2}  \Big(M_{\rm p}(\xi) e^{{\rm i} (\xi\cdot x^{\prime}+ \beta\, (x_3-b))} + M_{\rm s}(\xi) e^{{\rm i} (\xi\cdot x^{\prime}+\gamma\, (x_3-b))}\Big).
\end{align*}
Furthermore, we obtain from \eqref{ke} that
\begin{align*}
v^{\rm sc}(x)=\int_{\mathbb R^2}\widehat{T_y\text{G}_{\rm H}}(x, -\xi)\widehat{v^{\rm sc}}(\xi){\rm d}\xi =
\int_{\Gamma_0}T_y\text{G}_{\rm H}(x,y)v^{\rm sc}(y){\rm d}s(y),
\end{align*}
which completes the proof of (\ref{ex-vsc}).
\end{proof}

\begin{rema}\label{Rem:5.4}

We make a few comments on the existence result in Theorem \ref{exist}.

(i) If $u^{\rm in}$ is of the form (\ref{2.15}), then the reflected wave $u^{\rm re}_{\rm pg}$ is given by (cf. \eqref{re-p})
    \begin{align}
u^{\rm re}_{\rm pg}(x)&= -\int_{\mathbb R^2} \frac{ (\xi,\gamma)^\top\cdot (\xi,-\beta)^\top}{(\beta\gamma+|\xi|^2)^2}M_{\rm p}(\xi)(\xi, \beta)^\top g(\xi) e^{{\rm i} (\xi\cdot x^{\prime}+ \beta (x_3-b))}{\rm d}\xi \notag\\
&\quad - \int_{\mathbb R^2}\frac{1}{(\beta\gamma+|\xi|^2)^2} M_{\rm p}(\xi)\Big[(\xi, \gamma)^\top\times \Big((\xi,-\beta)^\top\times(\xi, \beta)^\top\Big)\Big] g(\xi) e^{{\rm i}(\xi\cdot x^{\prime}+\gamma (x_3-b))}{\rm d}\xi. \notag
    \end{align}
    If $u^{\rm in}$ is of the form (\ref{2.16}), then the reflected wave $u^{\rm re}_{\rm sg}$ is given by (cf. \eqref{re-s})
    \begin{align}
    u^{\rm re}_{\rm sg}(x)&=
-\int_{\mathbb R^2} \frac{ (\xi,\gamma)^\top\cdot ((\xi,-\beta)^\top\times \textbf{q}(\xi))}{(\beta\gamma+|\xi|^2)^2}M_{\rm s}(\xi)(\xi, \beta)^\top e^{{\rm i} (\xi\cdot x^{\prime}+ \beta(x_3-b))}{\rm d}\xi \notag\\
&\quad - \int_{\mathbb R^2}\frac{1}{(\beta\gamma+|\xi|^2)^2} M_{\rm s}(\xi)\Big[(\xi, \gamma)^\top\times \Big(((\xi,-\beta)^\top\times \textbf{q}(\xi))\times(\xi, \beta)^\top\Big)\Big]e^{{\rm i} (\xi\cdot x^{\prime}+\gamma (x_3-b))}{\rm d}\xi. \notag
    \end{align}
Thus, if $u^{\rm in}$ takes the general form \eqref{incc}, it follows from the linear superposition that the reflected wave is given by
\begin{align*}
{u}^{\rm re}(x) = c_{\rm p}{u}^{\rm re}_{\rm pg}(x) +
c_{\rm s}{u}^{\rm re}_{\rm sg}(x).
\end{align*}

(ii) It is unclear whether the solution given by Theorem \ref{exist} is unique or not. By the proof of Theorem \ref{uniqueness}, the uniqueness is correct if the third component of the normal at the boundary $S$ is non-negative (i.e., $\nu_3\geq 0$). Note that this condition covers interfaces given by step functions and is thus weaker than the assumption used in Section \ref{uniqueness}. For the Helmholtz and Maxwell equations,
the well-posedness results have been established for general locally perturbed flat surfaces which are not necessarily the graph of a function (see \cite{Wood2006,lwz12, Li2010}). The arguments rely heavily on properties of the DtN maps derived from the corresponding reflection principle.
However, due to the lack of a pointwise reflection principle for the first boundary value problem of the Navier equation, we are not sure whether the DtN approach can be applied to our scattering problem. Thus, we can only obtain the existence result in the general case.

(iii) The result in Theorem \ref{exist} improves the acoustic and electromagnetic counterparts in the following sense. First, it shows that the existence results can be verified for general incoming waves from the upper half space even if the uniqueness is unknown. One can expect the same conclusion for acoustic and electromagnetic transmission problems. Second, the split of $u^{sc}$ into the sum $u^{\rm re}+v^{\rm sc}$ was rigorously justified under the mild assumption that $u^{sc}$ satisfies the UASR (\ref{uprc}).

\end{rema}

\subsubsection{Case (ii): perturbation above the ground plane}

In this subsection, we consider the scattering surface $\Gamma=\{ x\in\mathbb{R}^3: x_3=f(x^{\prime}),\,x^{\prime}\in\mathbb{R}^2\}$, where $f$ is a Lipschitz continuous function and is assumed to satisfy $f(x^{\prime})=0$ when $|x^{\prime}|>R$ for some $R>0$. This means that $\Gamma$ is a local perturbation of the ground plane $x_3 = 0$. The problem geometry is shown in Figure \ref{fig1}. Let $D=\{x\in\mathbb{R}^3: x_3>f(x^{\prime}),\,x^{\prime} \in \mathbb R^2\}$ and $\Lambda_R:= \Gamma \cap \{x: |x^{\prime}|\leq R\},$ which contains the perturbed part of $\Gamma.$ Denote by $\Omega_R = \{x \in D: |x|< R\}$ the truncated bounded domain and by $B_R^+ = \{ x \in \mathbb R^3: |x|<R, ~ x_3>0\}$ the upper half sphere. Let $S_R = \{x \in D: |x| = R \}$ and denote by $\nu$ the unit normal vector on $S_R$, pointing into the exterior of $\Omega_R$. Obviously, $\partial \Omega_R = \Lambda_R \cup S_R$.

\begin{figure}
\centering
\includegraphics[width=7cm,viewport=0.pt 100.pt 800.pt
  700.pt,clip]{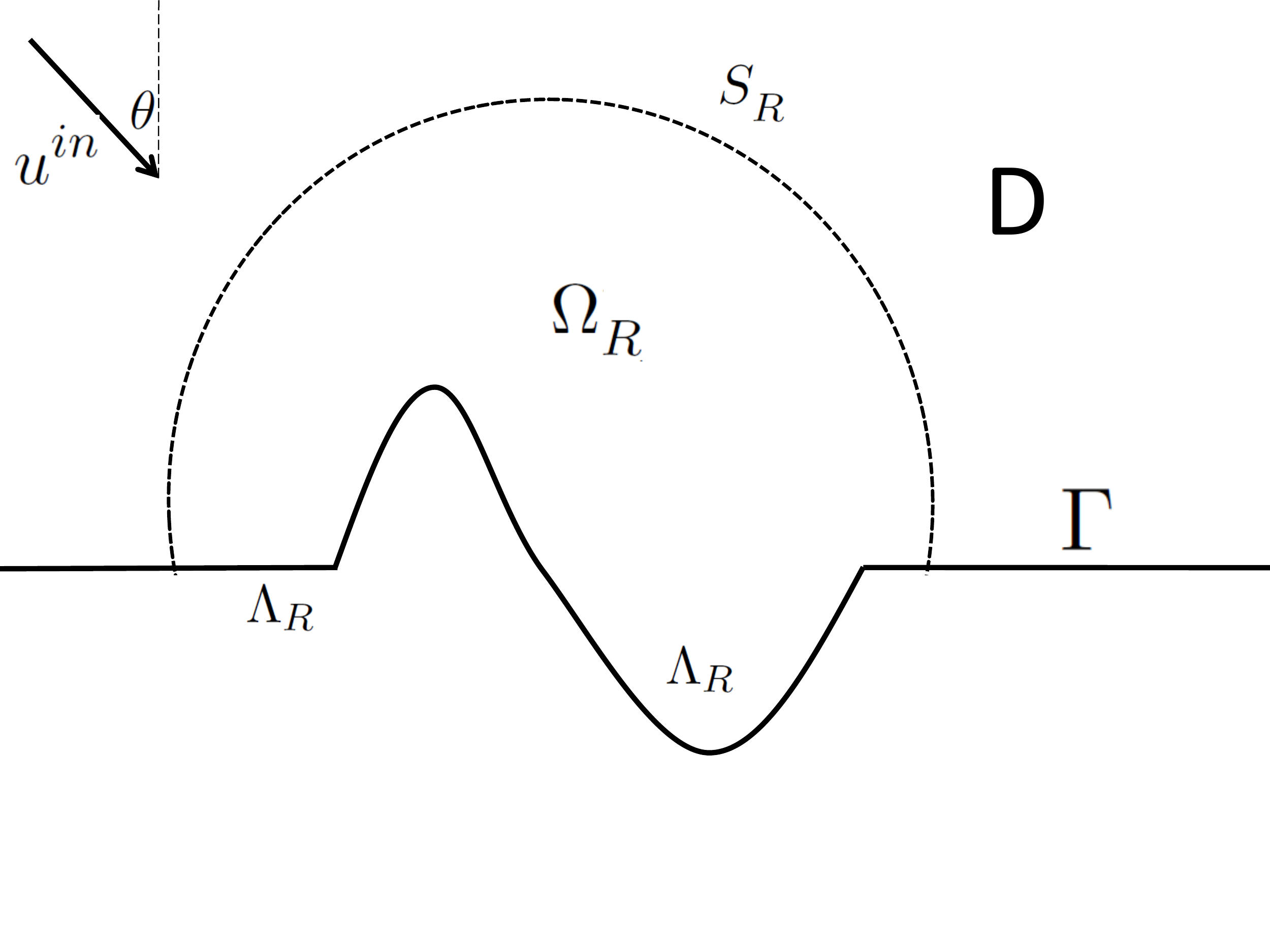}
\caption{Geometry of the scattering problem in a locally perturbed half plane. }
\label{fig1}
\end{figure}

Let $u^{\rm in}$ be the incident elastic plane wave  \eqref{inc}. 
Due to the local perturbation, we suppose that
 the scattered field $u^{\rm sc}= u^{\rm re} +  v^{\rm sc}$ can be further decomposed into the sum of the reflected wave $ u^{\rm re}$ and $ v^{\rm sc}$,
where $ u^{\rm re}$ is the reflected field of the form \eqref{reflect} solving the unperturbed scattering problem
and $v^{\rm sc}$ satisfies the outgoing Kupradze radiation condition as defined in Definition \ref{def:radiation}.

Define the Sobolev space
$X_R= \{v \in H^1(\Omega_R)^3: v =0 ~\text{on} ~\Lambda_R\}$ and denote by $X_R^{-1}$ the dual space of $X_R$.
Introduce the Sobolev spaces on the open surface (see e.g., \cite{M-CUP00}):
\[\begin{split}
H^{1/2}(S_R)^3:=\{u|_{S_R}: u\in H^{1/2}(\partial\Omega_R)^3\},\quad
\tilde{H}^{1/2}(S_R)^3:=\{u\in H^{1/2}(\partial \Omega_R)^3: \mbox{supp} (u) \subset S_R\}.
\end{split}\]
Denote by $H^{-1/2}(S_R)^3$ the dual space of $\tilde{H}^{1/2}(S_R)^3$ and by $\tilde{H}^{-1/2}(S_R)^3$ the dual space of $H^{1/2}(S_R)^3$.

Next, we introduce the generalized stress (or traction) operator and the corresponding bilinear form
\begin{align*}
T_{a,b}u &= (\mu + a)\partial_{ \nu} u + b  \nu \nabla \cdot  u + a \nu \times (\nabla \times  u), \\
\mathcal{E} (u, w) &= (\mu + a)\sum_{j,k=1}^3 \partial_k u_j \partial_k w_j + b (\nabla \cdot u)(\nabla \cdot w) - a (\nabla \times u) \cdot (\nabla \times w)
\end{align*}
where 
$a,b\in \R$ satisfying $a + b = \lambda + \mu$. Throughout this section we choose
\begin{align*}
a = \frac{\mu (\lambda + \mu)}{\lambda + 3 \mu}, \quad b = \frac{(\lambda + \mu) (\lambda + 2\mu)}{\lambda + 3 \mu}.
\end{align*}
The above choice of $a$ and $b$ yields a compact double layer operator $\mathcal D$ with a weakly singular kernel (see \cite{hahner}) as defined below in \eqref{sd}.
For simplicity we still denote $T_{a, b}$ by $T_{\nu}$, which is called the pseudo stress operator \cite{hahner} with the new choice of $a$ and $b$. Note that the usual stress operator corresponds to $a=\mu$ and $b=\lambda$ and the Betti's formula are still valid for the new choice, i.e.,
\begin{align}\label{vf}
\int_{\Omega_R} \mathcal{E} ( u, v) - \omega^2  u \cdot v\; {\rm d}x - \int_{S_R}  v \cdot T_{\nu} u {\rm d}s = 0.
\end{align}

By applying Green's formula and the half-plane Kupradze radiation condition, it is easy to derive the Green's representation formula for the scattered wave $v^{\rm sc}$:
\begin{align}\label{grf}
v^{\rm sc}(x) = \int_{S_R} T_{ \nu( y)}\text{G}_{\rm H}( x,  y) \cdot  v^{\rm sc}( y) - \text{G}_{\rm H}( x, y) \cdot T_{ \nu( y)} v^{\rm sc}( y) {\rm d}s(y), \quad  x \in D\backslash \overline{\Omega}_R.
\end{align}
Taking the limit $ x \rightarrow S_R$ in \eqref{grf} and setting $ p = T_{ \nu}  v^{\rm sc}|_{S_R} \in H^{-1/2}(S_R)^3$, we obtain
\begin{align}\label{jc}
(\frac{1}{2} \mathcal I - \mathcal D) ( v^{\rm sc}|_{S_R}) + \mathcal S  p = 0 \quad \text{on} ~S_R.
\end{align}
Here $\mathcal I$ is the identity operator, $\mathcal D$ and $\mathcal S$ are the double layer and single layer operators over $S_R$, respectively, defined by
\begin{align}\label{sd}
(\mathcal D  g)( x) = \int_{S_R} T_{ \nu(\boldsymbol y)}\text{G}_{\rm H}( x,  y)  g( y){\rm d}s( y),\quad
(\mathcal S  g)(\boldsymbol x) = \int_{S_R} \text{G}_{\rm H}( x,  y) g( y){\rm d}s( y).
\end{align}
Combing \eqref{vf} and \eqref{jc} yields the variational formulation for the unknown solution pair $( u, p) \in X_R \times H^{-1/2}(S_R)^3 := X$ as follows
\begin{align}\label{cvf}
\mathbb B(( u,  p), ( \varphi,  \chi)) =
\begin{bmatrix}
b_1(( u,  p), ( \varphi,  \chi))\\
b_2(( u,  p), ( \varphi,  \chi))
\end{bmatrix}
=
\begin{bmatrix}
\int_{S_R} T_{ \nu} u_0 \cdot \overline{ \varphi}{\rm d}s\\[5pt]
\int_{S_R} (\frac{1}{2} \mathcal I - \mathcal D) ( u_0|_{S_R}) \cdot \overline{ \chi}{\rm d}s
\end{bmatrix}
\end{align}
for all $(\varphi,  \chi) \in X$, where $ u_0 = u^{\rm in} +  u^{\rm re}$ is the reference field and
\begin{align*}
b_1(( u,  p), ( \varphi,  \chi)) &= \int_{\Omega_R} \mathcal{E} (u, \overline{ \varphi}) - \omega^2  u \cdot \overline{ \varphi} {\rm d}x - \int_{S_R}  \overline{ \varphi} \cdot  p {\rm d}s,\\
b_2(( u,  p), ( \varphi, \chi))&= \int_{S_R}\Big((\frac{1}{2} \mathcal I - \mathcal D) ( u|_{S_R}) + \mathcal S p \Big)\overline{ \chi} {\rm d}s.
\end{align*}

The Fredholm property of the sesquilinear form $\mathbb B$ can be proved by following almost the same lines in \cite{hyz18}. To prove the uniqueness, one has to assume that $\omega^2$ is not a Dirichlet eigenvalue of the operator $-(\mu \Delta+(\lambda+\mu)\nabla(\nabla\cdot))$ over $\Omega_R$. This assumption implies the equivalence of the variational problem (\ref{cvf}) posed on $\Omega_R$ and our scattering problem in $D$. As a consequence of Theorem \ref{uniqueness}, one obtains the uniqueness. We refer to \cite{hyz18} for the details and only state the well-posedness results below.

\begin{theo}\label{th5.12} Assume that $\omega^2$ is not a Dirichlet eigenvalue of the operator $-(\mu \Delta+(\lambda+\mu)\nabla(\nabla\cdot))$ over $\Omega_R$. Then, there exists a unique solution $u \in X_R$ to the variational formulation (\ref{cvf}). Moreover, one may extend $v^{\rm sc}:=u-u^{\rm in}-u^{\rm sc}$ from $\Omega_R$ to $D\backslash\overline{\Omega}_R$ through (\ref{grf}) and the extended solution satisfies the radiation solution (\ref{rc}).
\end{theo}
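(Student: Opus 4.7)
The plan is to establish the well-posedness through the Fredholm alternative applied to the sesquilinear form $\mathbb B$, together with a uniqueness argument reducing to Theorem \ref{uniqueness}. I would first verify that $\mathbb B$ is a bounded sesquilinear form on $X=X_R\times H^{-1/2}(S_R)^3$. Boundedness of the interior terms in $b_1$ follows from the trace theorem and the continuity of $T_\nu$, while boundedness of the boundary terms involving $\mathcal D$ and $\mathcal S$ follows from the mapping properties of the layer operators associated with the half-space Green's tensor $\text{G}_{\rm H}$ established in Lemma \ref{green}.

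The heart of the argument is a G{\aa}rding-type inequality. The pseudo-stress operator $T_\nu$ with the specific choice $a=\mu(\lambda+\mu)/(\lambda+3\mu)$, $b=(\lambda+\mu)(\lambda+2\mu)/(\lambda+3\mu)$ is precisely the one for which the double layer kernel $T_{\nu(y)}\text{G}_{\rm H}(x,y)$ is only weakly singular on $S_R$; consequently $\mathcal D:H^{1/2}(S_R)^3\to H^{1/2}(S_R)^3$ is compact and $\mathcal S:H^{-1/2}(S_R)^3\to H^{1/2}(S_R)^3$ is bounded (and also compact when viewed on appropriate spaces). Writing
\[
\Re\,b_1((u,p),(u,p)) = \Re\int_{\Omega_R}\mathcal E(u,\bar u)\,{\rm d}x - \omega^2\|u\|_{L^2(\Omega_R)^3}^2 - \Re\int_{S_R}\bar u\cdot p\,{\rm d}s,
\]
the quadratic form $\int_{\Omega_R}\mathcal E(u,\bar u)\,{\rm d}x$ is coercive on $X_R$ modulo a compact $L^2$ perturbation thanks to Korn's inequality together with the homogeneous Dirichlet condition on $\Lambda_R$. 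Combining this with the compactness of $\mathcal D$ and a suitable pairing of the $S_R$-integral terms in $b_1$ and $b_2$, one obtains
\[
\Re\,\mathbb B((u,p),(u,p)) \ge C_1\bigl(\|u\|_{H^1(\Omega_R)^3}^2+\|p\|_{H^{-1/2}(S_R)^3}^2\bigr) - \mathcal K(u,p),
\]
where $\mathcal K$ is a compact lower-order form. Hence the operator induced by $\mathbb B$ is Fredholm of index zero.

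For uniqueness of the homogeneous problem, suppose $u\in X_R$ and $p\in H^{-1/2}(S_R)^3$ satisfy $\mathbb B((u,p),\cdot)=0$. Testing with compactly supported $\varphi$ shows that $u$ solves the homogeneous Navier equation in $\Omega_R$, and varying $\chi$ enforces the boundary integral equation $(\tfrac12\mathcal I-\mathcal D)(u|_{S_R})+\mathcal S p=0$. Define $v^{\rm sc}$ in $D\setminus\overline{\Omega}_R$ by the right-hand side of \eqref{grf}; the jump relations for the half-space single and double layer potentials together with \eqref{jc} (now in its homogeneous form) give that $v^{\rm sc}$ and $T_\nu v^{\rm sc}$ match $u|_{S_R}$ and $p$ across $S_R$, so the pair extends to a global $H^1_{\rm loc}(D)^3$ solution of the Navier equation with $u=0$ on $\Gamma$ and satisfying the half-space Kupradze condition (hence the UASR \eqref{uprc}) by Lemma \ref{green}(ii). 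Since $\Gamma$ is a uniformly Lipschitz graph (the locally perturbed ground plane), Theorem \ref{uniqueness} forces $u\equiv 0$ in $D$, and the Dirichlet eigenvalue assumption on $\Omega_R$ is what guarantees that the interior Dirichlet problem has no nontrivial solutions to rule out the spurious eigenmodes hidden in the boundary-integral reformulation; thus $p=T_\nu u|_{S_R}=0$ as well.

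The main obstacle, as indicated by the reduction to \cite{hyz18}, is the careful pairing between the interior form $b_1$ and the boundary form $b_2$ that makes the compact perturbations cancel in the right variables, together with the jump-relation analysis needed to pass from a solution of the coupled system $(u,p)$ to a genuine global radiating solution. Once uniqueness is established, the Fredholm alternative delivers a unique $(u,p)\in X$ solving \eqref{cvf} for the specified right-hand side; the extension $v^{\rm sc}$ defined through \eqref{grf} automatically inherits the half-space Kupradze radiation condition \eqref{rc} by Lemma \ref{green}(ii), completing the proof of Theorem \ref{th5.12}.
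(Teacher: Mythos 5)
Your proposal follows essentially the same route as the paper, which itself only sketches the argument and defers the details to \cite{hyz18}: Fredholm property of $\mathbb B$ via a G{\aa}rding-type inequality (with the pseudo-stress choice of $a,b$ making $\mathcal D$ compact), uniqueness by extending a homogeneous solution to a global radiating field and invoking Theorem \ref{uniqueness}, and the Dirichlet-eigenvalue assumption to tie the variational problem on $\Omega_R$ to the genuine scattering problem in $D$.

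One imprecision worth flagging: you claim that the jump relations together with the homogeneous form of \eqref{jc} give that both $v^{\rm sc}$ and $T_\nu v^{\rm sc}$ match $u|_{S_R}$ and $p$ across $S_R$, and you then invoke the eigenvalue assumption only at the very end, to deduce $p=0$ after $u\equiv 0$. That last step is actually trivial — once $u\equiv 0$ in $D$, the identity $p=T_\nu u|_{S_R}$ (which already follows from testing $b_1=0$ against $\varphi\in X_R$) immediately forces $p=0$. The place where the eigenvalue assumption is genuinely needed is earlier: the jump relations plus the homogeneous boundary integral equation only give matching of the \emph{Dirichlet} trace $v^{\rm sc}|_{S_R}^{\rm ext}=u|_{S_R}$ directly, and they show the interior potential vanishes on $S_R$ (and on the flat part of the boundary, where $\text{G}_{\rm H}$ vanishes). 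To conclude that the interior potential is identically zero — which is what is needed to get the \emph{Neumann} data $T_\nu v^{\rm sc}|_{S_R}^{\rm ext}=p$ to match and hence to produce a genuine distributional solution of the Navier equation across $S_R$ — one uses precisely that $\omega^2$ is not a Dirichlet eigenvalue of the Lam\'e operator over the relevant bounded region. This is what the paper means when it says the assumption ``implies the equivalence of the variational problem (\ref{cvf}) posed on $\Omega_R$ and our scattering problem in $D$.'' Reordering your argument so that the eigenvalue hypothesis is used to justify the gluing would make the uniqueness proof complete.
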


\begin{rema}\label{rema:5.7}
We make some comments on the well-posedness results in Theorem \ref{th5.12}.

(i)  In contrast with Theorem \ref{exist}, Theorem \ref{th5.12} is justified under the strong assumption that ${u}={u}^{\rm in} +  u^{\rm re} + v^{\rm sc}$ where $v^{\text{sc}}$ satisfies the half-plane Kupradze radiation condition; see \eqref{grf} where this assumption was used. This automatically implies that ${u}-{u}^{\rm in}$ fulfills the weaker radiation condition UPRC \eqref{uprc}. We refer to
Remark \eqref{Rem:5.4} (ii) for the reason why we cannot prove the uniqueness for non-graph scattering surfaces.

(ii) With the argument in the proof of Theorem \ref{th5.12}, one can discuss the well-posedness of the elastic scattering from a trapezoidal surface, which is a non-local perturbation of flat surfaces. This requires a modified radiating assumption on $u-u^{in}$ which depends on both the incident wave and the scattering surface; see \cite{LuHu2019} for the acoustic scattering problem with a trapezoidal sound-soft curve.

\end{rema}

Now we consider the boundary value problem in a locally perturbed half space:
\be\label{bvp}
\mu \Delta v+(\lambda+\mu)\nabla (\nabla\cdot v)+\omega^2 v=0\quad\mbox{in} ~ D,\quad
v=h\quad\mbox{on} ~ \Gamma,
\en
where $h\in (H^{1/2}(\Gamma))^3$ and $v$ is required to satisfy the UPRC (\ref{uprc}) in $x_3>0$. We can always find a function $h_0\in (H^{1/2}(\Gamma_0))^3$ such that $h_0=h$ in $\Gamma\cap\{x: |x|>R\}$ for the $R$ specified at the beginning of this subsection. Set
\ben
 v_0(x)=\frac{1}{2\pi}\int_{\mathbb R^2}\Big\{
\frac{1}{\beta\,\gamma+| \xi|^2}  \Big(M_{\rm p}(\xi) e^{{\rm i} (\xi\cdot x^{\prime}+ \beta\, x_3)} + M_{\rm s}(\xi) e^{{\rm i} (\xi\cdot x^{\prime}+\gamma\, x_3)}  \Big)   \hat{h}_0(\xi)
  \Big\} {\rm d} \xi,\quad x\in D.
\enn
Then $v_0\in H^1(\tilde{S}_b)^3$ for any $b>0$ with strip $\tilde{S}_b:=\{x: 0<|x_3|<b\}$ and it is an upward propagating solution to the Navier equation with the Dirichlet data $v_0=h_0$ on $x_3=0$. By Sobolev extension theorem (see e.g., \cite[Theorem 7.25]{GilTru-1983}), $v_0$ can be extended to a function $v_1\in H^1(S_b)$ from $x_3>0$ to $D$ such that $v_1\equiv v_0$ in $x_3>0$. Defining $w_1=v-v_1$, we deduce that
\ben
\mu \Delta w_1+(\lambda+\mu)\nabla (\nabla\cdot w_1)+\omega^2 w_1=f_1\quad\mbox{in} ~ D,\quad
w_1=h_1\quad\mbox{on} ~ \Gamma,
\enn
where $f_1\in (H^{1}(\Omega_R))^*$ is compactly supported in $D\cap\{x_3<0\}$ and $h_1\in H^{1/2}(\Gamma)$ is compactly supported in $\Lambda_R$.
Finally, by a lifting argument one can reduce the previous problem to a homogeneous boundary value problem
\ben
\mu \Delta w_2+(\lambda+\mu)\nabla (\nabla\cdot w_2)+\omega^2 w_2=f_2\quad\mbox{in} ~ D,\quad
w_2=0\quad\mbox{on} ~ \Gamma,
\enn
with $f_2\in (H^{1}(\Omega_R))^*$ compactly supported in $\Omega_R$, where $w_2=w_1 - v_2$ for some $v_2\in H^1(S_b)^3$ ($b>0$) such that $v_2\equiv h_1$ on $\Gamma$ and $v_2\equiv 0$ in $x_3>2R$.
Choose $R>0$ such that $\omega^2$ is not a Dirichlet eigenvalue of the operator $-(\mu \Delta+(\lambda+\mu)\nabla(\nabla\cdot))$ over $\Omega_R$. Then the above inhomogeneous source problem can be equivalently formulated as the variational problem:
\ben
\mathbb B(( w_2,  p), ( \varphi,  \chi))
=
\begin{bmatrix}
\int_{\Omega_R} f_2\cdot \overline{\varphi}\,  {\rm d}x\\[5pt]
0
\end{bmatrix},\quad p:=T_{\nu} w_2|_{S_R}\in H^{-1/2}(S_R)^3, \quad \forall (\varphi,  \chi) \in X.
\enn
By the proof of Theorem
\ref{th5.12}, there admits a unique solution $w_2\in H^1(\Omega_R)^3$, which can be extended to a Sommerfeld radiating solution in $D\cap\{|x|>R\}$. We summarize the solvability result as follows.
\begin{corollary}
The boundary value problem \eqref{bvp} admits a unique upward propagating solution $v=\tilde{v}+w_2\in H^1(S_b)^3$ for any $b>0$, where $\tilde{v}$ satisfies the UASR (\ref{uprc}) and $w_2$ satisfies the half-space Kupradze radiation condition.
\end{corollary}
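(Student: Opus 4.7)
The plan is to execute the three-stage reduction already sketched in the text preceding the corollary: (i) absorb the ``tail'' of the Dirichlet datum $h$ into an explicit UASR solution over the unperturbed half-space; (ii) remove the remaining compactly supported Dirichlet datum via a Sobolev trace lifting; (iii) solve the resulting homogeneous Dirichlet problem with a compactly supported interior source by applying the variational/Green's-representation framework of Theorem \ref{th5.12} with a modified right-hand side. Uniqueness is then inherited from Theorem \ref{uniqueness}.

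For step (i), I would choose $h_0\in H^{1/2}(\Gamma_0)^3$ agreeing with $h$ on $\Gamma\cap\{|x|>R\}$ and define $\tilde v$ in $U_0$ by the UASR (\ref{uprc}) with boundary trace $h_0$; by the mapping properties of the upward DtN operator (Lemma \ref{dtn}), $\tilde v\in H^1(S_b\cap U_0)^3$ for every $b>0$ and tautologically satisfies the UASR. A Stein--Calder\'on extension produces $v_1\in H^1_{\rm loc}(D)^3$ with $v_1\equiv\tilde v$ above $\Gamma_0$, whose Navier residual $f_1$ is supported in $D\cap\{x_3<0\}\cap\{|x'|<R\}$ and whose Dirichlet mismatch $h_1:=h-v_1|_\Gamma$ is compactly supported in $\Lambda_R$. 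For step (ii), a standard $H^{1/2}\to H^1$ trace lifting $v_2$ of $h_1$, compactly supported inside $\Omega_R$, leaves a homogeneous Dirichlet problem for $w_2\in H^1_{\rm loc}(D)^3$ with $w_2=0$ on $\Gamma$, a compactly supported source $f_2\in(H^1(\Omega_R)^3)^\ast$, and the half-space Kupradze radiation condition at infinity.

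For step (iii), I would enlarge $R$ if necessary so that $\omega^2$ avoids the discrete set of Dirichlet eigenvalues of $-(\mu\Delta+(\lambda+\mu)\nabla\nabla\cdot)$ on $\Omega_R$, and apply the sesquilinear system \eqref{cvf} with right-hand side replaced by $(\int_{\Omega_R} f_2\cdot\overline{\varphi}\,\mathrm{d}x,\,0)^\top$. The Fredholm property and uniqueness analysis of Theorem \ref{th5.12} go through verbatim, since only the data functional changes. Extending $w_2|_{\Omega_R}$ to $D\setminus\overline{\Omega}_R$ through the Green's representation (\ref{grf}) yields a function satisfying the half-space Kupradze condition by Lemma \ref{green}. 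Reassembling and absorbing $v_1-\tilde v$ (which vanishes above $\Gamma_0$) together with $v_2$ into the Kupradze-radiating component gives the desired decomposition $v=\tilde v+w_2$.

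The main obstacle is reconciling the two radiation conditions in the final decomposition: the UASR only controls strip-wise $H^1$ behavior, whereas the second component must fulfill pointwise Sommerfeld decay. This is forced by the compact support of the Dirichlet trace of the Kupradze-radiating piece on $\Gamma_0$ off the perturbation, which via Lemma \ref{green} propagates Kupradze behavior throughout $D\setminus\overline{\Omega}_R$. A secondary technical point is verifying that the Stein extension $v_1$ is compatible with the splitting $U_0$ versus $D\cap\{x_3<0\}$ so that $f_1$ remains compactly supported; this is standard once $\tilde v\in H^1(S_b\cap U_0)^3$ has been established. Uniqueness of $v$ follows from Theorem \ref{uniqueness} applied to the difference of two candidate solutions, whose UASR remainder must vanish under the graph hypothesis on $\Gamma$.
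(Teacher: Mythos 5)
Your proposal follows the paper's own three-step reduction essentially verbatim: define $v_0$ (your $\tilde v$) via the UASR with an ambient boundary datum $h_0$ agreeing with $h$ away from the perturbation, extend it into $D$ by a Sobolev extension theorem, peel off the compactly supported Dirichlet residual by a trace lifting, and solve the resulting homogeneous-boundary, compactly-supported-source problem by the variational system \eqref{cvf} with modified right-hand side, extending outward through the Green's representation \eqref{grf}. The one place you are slightly more explicit than the paper is the final reassembly: the paper's chain of substitutions actually produces $v=v_1+v_2+w_2$, and your observation that $v_1-\tilde v$ and $v_2$ are compactly supported (hence can be absorbed into the Kupradze-radiating component without disturbing the Sommerfeld decay at infinity) is exactly what is needed to land on the stated form $v=\tilde v+w_2$; the paper leaves this implicit.
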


\section{Concluding remarks}\label{sec:con}
\setcounter{equation}{0}

We have presented the mathematical formulation of time-harmonic elastic scattering from general unbounded rough surfaces in three dimensions. In particular, the ASR in a half space is derived and properties of the DtN map are analyzed. The uniqueness is proved for the Lipschitz continuous rough surface which is given by the graph of a function. We deduce the Green's tensor for the first boundary value problem of the Navier equation in a half space. The existence of weak solution to locally perturbed scattering problem is established by applying the Fredholm alternative to an equivalent variational formulation in a truncated bounded domain.

Below we list three interesting questions for locally perturbed scattering problems which deserve to be further investigated:
  \begin{itemize}
  \item The uniqueness result for perturbations given by non-graph functions.

  \item Equivalent variational formulation in a bounded domain without the coupling scheme between the finite element method
and the integral representation. In particular, a numerical scheme avoiding the half-space Green's tensor and involving the free-space's tensor only would be desirable from the numerical point of view.

  \item Explicit dependence of the solution on the frequency of incidence in linear elasticity. The variational approach developed \cite{cm-siaa05} leads to an explicit wave-number dependence of solutions to the acoustic rough surface scattering problems. However, the derivation of such kind of estimates relies on the positivity of the real part of the DtN map (see \cite[Lemma 3.2]{cm-siaa05}), which unfortunately is not applicable to the Navier equation.
  \end{itemize}

Based on the framework presented in this work, we plan to carry out the study of the elastic scattering from globally perturbed (non-periodic) rough surfaces, for example, due to an inhomogeneous elastic source term or an incoming point source incidence. This will extend at least the acoustic results of \cite{cm-siaa05} and \cite{CWEL} in weighted and non-weighted Sobolev spaces to linear elasticity in three dimensions. In particular, the absence of elastic surfaces can be proved as a consequence of well-posedness in weighted Sobolev spaces.

\end{document}